\numberwithin{equation}{section}
\newtheorem{theorem}{Theorem}[section]
\newtheorem{lemma}[theorem]{Lemma}
\newtheorem{remark}{Remark}[section]
\newtheorem{definition}[theorem]{Definition}
\newtheoremstyle{nonitalic}
{3pt}
{3pt}
{\normalfont}
{}
{\bfseries}
{}
{.5em}
{}
\theoremstyle{nonitalic}
\newcounter{hypothesis}
\newtheorem{hyp}[hypothesis]{}
\DeclareMathOperator{\sign}{sign}
\DeclareMathOperator{\supp}{supp}
\DeclareMathOperator*{\esssup}{ess\,sup}
\DeclareMathAlphabet\mathbfcal{OMS}{cmsy}{b}{n}
\DeclareMathOperator{\DIV}{div}
\DeclareMathOperator{\BV}{BV}
\newcommand{\as}[1]{\left\vert#1\right\vert}
\newcommand{\magg}[1]{\left\vert#1\right\vert ^{2}}
\newcommand{\norm}[1]{\left\Vert#1\right\Vert}
\newcommand{\grad}{\nabla}
\newcommand{\p}{\partial}
\newcommand{\dx}{{\rm d}x}
\newcommand{\Rd}{\mathbb{R}^{d}}
\newcommand{\diff}{\mathop{}\!\mathrm{d}}
\let\normalrender\PdfRender@NormalColorHook
\let\PdfRender@NormalColorHook\@empty
\title[short title]{Well-posedness of aggregation-diffusion systems with irregular kernels}
\author{José A. Carrillo}
\author{Yurij Salmaniw}
\author{Jakub Skrzeczkowski}
\address{Mathematical Institute, University of Oxford, Woodstock Road, Oxford, OX2 6GG, United Kingdom}
\email{carrillo@maths.ox.ac.uk}
\email{yurij.salmaniw@maths.ox.ac.uk}
\email{jakub.skrzeczkowski@maths.ox.ac.uk}
\keywords{aggregation-diffusion,  irregular kernel, nonlocal PDE, entropy methods, cross-diffusion, multi-species models}
\subjclass{35K40, 35K55, 35A01, 35A02, 35B65, 35Q92}
\begin{document}

\begin{abstract}
We consider aggregation-diffusion equations with merely bounded nonlocal interaction potential $K$. We are interested in establishing their well-posedness theory when the nonlocal interaction potential $K$ is neither differentiable nor positive (semi-)definite, thus preventing application of classical arguments. We prove the existence of weak solutions in two cases: if the mass of the initial data is sufficiently small, or if the interaction potential is symmetric and of bounded variation without any smallness assumption. The latter allows one to exploit the dissipation of the free energy in an optimal way, which is an entirely new approach. Remarkably, in both cases, under the additional condition that $\nabla K\ast K$ is in $L^2$, we can prove that the strong solution is unique. When $K$ is a characteristic function of a ball, we construct the classical unique solution. Under additional structural conditions we extend these results to the $n$-species system.
\end{abstract}

\maketitle



\section{Introduction}

Aggregation-diffusion equations and systems have been used extensively as a tool to model mean-field approximations of interacting agents or particles in a wide array of scientific disciplines, including fluid dynamics \cite{paterson1981first}, environmental science \cite{bear2013dynamics}, chemical engineering \cite{bird2007transport}, physics \cite{barends1997groundwater}, phase separation in materials science \cite{HP06,MR1453735}, cell-cell adhesion in biology \cite{MR3783102,MR3948738,ButtenschoenHillen2021,Buttenschoen2018SpaceJump,falco2022local}, crowd dynamics \cite{muntean2014collective,MR1698215}, biological aggregation \cite{MR2257718} and ecology \cite{cantrell2003spatial}. Their ubiquity can be explained by their direct connection with discrete dynamics described by stochastic differential equations \cite{Carrillo2020LongTime,kolokolnikov2013emergent, motsch2014heterophilious,pareschi2013interacting,MR3858403}, random walk processes \cite{potts2016territorial, pottslewis2016} and singular limits connecting them with higher-order PDEs \cite{elbar2023limit, elbar-skrzeczkowski, carrillo2023degenerate}.\\

In the present work, we consider the general scalar aggregation-diffusion equation
\begin{align}\label{eq:mainscalar}
\begin{cases}
        \frac{\p u}{\p t} = \grad \cdot \left( D \grad u + u \grad ( K * u ) \right), \\
        u(0,x) = u_0(x),
\end{cases}
\end{align}
and more generally the $n$-species counterpart
\begin{align}\label{maineq}
\begin{cases}
        \frac{\p u_i}{\p t} = \grad \cdot \left( D_i \grad u_i + u_i \sum_{j=1}^n \grad ( K_{ij} * u_j ) \right),  \\
        u_i(0,x) = u_{i0}(x) ,
\end{cases}
\end{align}
where $*$ denotes a spatial convolution
\begin{align}
    K * v (t,x) := \int_{\mathbb{R}^d} K(x-y)\, v (t,y) \diff y
\end{align}
for some appropriate kernel(s) $K(\cdot)$ and spatial dimension $d\geq1$. The coefficients $D_i >0$ with $i=1,\ldots,n$ describe the rate at which the $i^{th}$ population disperses via linear diffusion while the interaction term describes the attraction/repulsion of population $u_i$ to/from population $u_j$ depending on the characteristics of the kernel $K_{ij}$. \\

The primary purpose of this work is to establish a robust well-posedness theory (global existence \& uniqueness) for irregular interaction potentials, with the motivating prototypical form being the top-hat kernel in one dimension:
\begin{equation}\label{detectionkernelT}
K(x) =  \begin{cases} 
-\frac{\alpha}{2R} , \hspace{0.5cm}- R \leq x \leq R, \cr
0, \hspace{1.0cm} \text{otherwise}, 
\end{cases} 
\end{equation}
where $\alpha \in \mathbb{R}$ is the strength of attraction ($\alpha>0$) or repulsion ($\alpha < 0$).\\

Despite the lack of well-posedness theory for \eqref{eq:mainscalar} with the top-hat kernel, \eqref{eq:mainscalar} has been used extensively as a minimal example of nonlocal animal interactions that can be explored analytically from the perspective of pattern formation via linear stability analysis, see e.g., \cite{Fagan2017perceptual, potts2016territorial, pottslewis2016, pottslewis2019, giunta2022detecting}. In some cases, we obtain the top-hat kernel through derivation \cite{pottslewis2016}; in other cases, we use the top-hat kernel to explore the transition between platykurtic (no tails) to leptokurtic (fat tailed) kernels \cite{Fagan2017perceptual}. As demonstrated in \cite{pottslewis2019}, several animal movement models which incorporate spatial memory (e.g., of marks left on a landscape \cite{lewismurray1993, moorcroftlewis2006}, of locations of territorial interactions \cite{potts2016territorial}, or of memory of previously visited locations \cite{liu2023biological}) can be reduced to system \eqref{maineq} through an appropriate quasi-steady state approximation. System \eqref{maineq} can then be viewed as a minimal model describing interacting populations with perception and implicit spatial memory.\\

In this work we provide a well-posedness theory for a wide class of low-regularity kernels, including \eqref{detectionkernelT}. Our work solves an open problem posed in \cite{wangsalmaniw2022}, and answers in the affirmative a conjecture made in \cite{pottslewis2019}. Importantly, we note that \eqref{detectionkernelT} falls outside the scope of any recent works on the topic: \cite{jungel2022nonlocal} deals with positive definite kernels (note that \eqref{detectionkernelT} is not positive definite as its Fourier transform is $\frac{\sin(Rx)}{Rx}$) while \cite{Giunta2021local} deals with kernels which are twice continuously differentiable. From the regularity theory point of view, equation \eqref{eq:mainscalar} includes two terms of opposite effect: a regularizing linear diffusion and irregular advection term which does not fall into the classical Cauchy-Lipschitz theory nor the theory of renormalized solutions \cite{MR2096794, MR1022305}. Our work answers the question when the regularizing effect of linear diffusion is sufficiently strong that it can smooth out the advection driven by the velocity $\nabla K \ast u$.\\

We highlight some relevant efforts related to the current objectives. If $K \in W^{2,\infty} (\mathbb{T}^d)$ and $n=1$, the well-posedness of \eqref{eq:mainscalar} is fairly classical and we refer to \cite{Carrillo2020LongTime,chazelle2017wellposedness} and references therein. Concerning the case of the $n$-species systems, the authors in \cite{Giunta2021local} construct the unique mild solution under the assumption that $K_{ij} = \alpha_{ij} K$, $\alpha_{ij} \in \mathbb{R}$, $i,j = 1,\ldots,n$, with $K$ twice differentiable and $\grad K \in L^\infty (\mathbb{T}^d)$. They use a semigroup theory approach, applying a contraction mapping argument with Duhamel's formula. The obtained solution exists globally in time only for $d=1$. The regularity assumption on the kernel $K$ has been substantially relaxed in \cite{jungel2022nonlocal}, where weak solutions are obtained for the $n$-species case when $D_i = \sigma>0$, $i=1,\ldots,n$, and the kernels $K_{ij} \in L^s (\mathbb{T}^d)$ for some $s\geq1$ (depending on the dimension $d$) are positive-definite and satisfy the so-called \textit{detailed balance} condition (see Hypothesis \textbf{\ref{h5}}). Their approach uses entropy estimates, where the positive-definiteness is required to control the dissipation of the entropy. Finally, \eqref{eq:mainscalar} can be considered with singular kernels in the whole space. For instance, the particular important case with $K$ being the Newtonian potential leads to the celebrated and well-studied Patlak-Keller-Segel system \cite{MR2436186, MR2793895, MR2914242, MR2481820, MR2249579,MR2383208,MR2103197, MR2226917,MR2876403,MR3024094,MR3365970,MR3466844,MR3196188,MR2227752,MR2099126}.\\ 

Let us now discuss the sharpness of our results. First, regarding the existence of weak solutions, classical blow-up results for the Keller-Segel system \cite{MR2099126,MR2103197,LS19} indicate that some regularity of $K$ must be imposed to ensure global well-posedness. These examples rely on specific singularities of the kernel $K$. To the best of our knowledge, there are no known nonexistence or blow-up results for \eqref{eq:mainscalar} with an integrable kernel $K\in L^1(\Rd)$ where the nonexistence arises from a lack of derivatives rather than from a singularity of $K$. We also would like to mention \cite[Theorem 2.2]{MR2519677} which proves that for appropriate initial conditions, the solution to
$$
\frac{\p u}{\p t} + (-\Delta)^{\alpha/2}u + \nabla\cdot(u\,\nabla K\ast u) = 0
$$
with a fractional diffusion of order $\alpha$, blows up in finite time when the diffusion is too weak ($\alpha \in (0,1)$), even for fairly regular kernels such as $K(x) = e^{-|x|}$.\\

Regarding the uniqueness of solutions, there have recently been several counterexamples for the related advection–diffusion equation
\begin{equation}\label{eq:advection_diffusion_nonuniqueness}
\frac{\p u}{\p t} = \grad \cdot (u\, V) + \Delta u.
\end{equation}
In \cite{MR4940033}, the Authors, using the stochastic Lagrangian method, constructed an example of $V \in L^{\infty}(0,T; L^p(\mathbb{T}^2))$ with $p<2$ for which solutions to \eqref{eq:advection_diffusion_nonuniqueness} are not unique in the natural class $L^{\infty}((0,T)\times \mathbb{T}^2)\cap L^2(0,T;H^1(\mathbb{T}^2))$. Another construction was obtained by the convex integration method \cite{MR4138227, MR3884855}. For instance, in \cite{MR3884855}, the Authors constructed $V \in C(0,T; W^{1,p}(\mathbb{T}^d)\cap L^q(\mathbb{T}^d))$ for suitable $p$ and $q$ such that uniqueness fails in the class $C(0,T; L^{q'}(\mathbb{T}^d))$, where $q'$ is the H{\"o}lder conjugate of $q$. Let us emphasize two points. First, it does not seem immediate to adapt these constructions to the case $V = \nabla K \ast u$, and to the best of our knowledge, no nonuniqueness results are currently known for \eqref{eq:mainscalar}. Second, in the above counterexamples, uniqueness fails in broader function classes compared to our setting, where we assume $u \in L^{\infty}((0,T)\times \mathbb{R}^d)\cap L^2(0,T;H^1(\Rd))$ together with $\Delta u \in L^2((0,T)\times\Rd)$. In particular, we do not exclude the possibility of other solutions with lower regularity. Finally, we mention \cite{MR2525170,MR2068667} where the authors derive formulas for two different solutions to the Keller-Segel system in the space of measures (after the blow-up), suggesting nonuniqueness in the class of measure solutions.\\
  
Finally, we want to comment that the small-mass case (Theorem \ref{thm:existsmallmassscalar}) can be combined with approaches of \cite{MR2103197} and \cite{jungel2022nonlocal} to allow to consider even more general kernels. Namely, we can consider $K =K_1 + K_2 + K_3$ where $K_1 \in L^{\infty}(\Rd)$, $K_2$ is as in the Keller-Segel system (this can be done only in dimension $d=2$) while $K_3$ is positive-definite. The details are discussed in Appendix \ref{app:extension_general_kernel}. \\

Last but not least, it is natural to ask whether our results can be extended to the more general, nonlinear equation
$$
\frac{\p u}{\p t} = \grad \cdot \left( D \grad u^m + u \grad ( K * u ) \right)
$$
with $m>1$ (porous medium regime) or $m<1$ (fast diffusion regime). In the present paper, the linearity of the diffusion term plays a crucial role: it allows us to control $\nabla (K\ast u) = K\ast \nabla u$ solely through diffusion, without relying on higher regularity properties of $K$. Therefore, the nonlinear extension requires some new ideas and will be a topic of future research.\\

The remainder of the paper is organized as follows. In Section \ref{sect:mr}, we present the main assumptions imposed on the kernel $K$ and the main results. Then, in Section \ref{sec:scalarcase}, we focus on the scalar equation. We briefly introduce some preliminary notations and notions of a weak, strong, and classical solution for problems \eqref{eq:mainscalar}-\eqref{maineq}. We obtain apriori estimates for the scalar equation, improve these estimates with no further restriction, and then prove Theorems \ref{thm:existsmallmassscalar}-\ref{thm:existarbmassscalar}. Under additional conditions, we obtain further improved estimates which allow us to prove Theorems \ref{thm:scalarfinal}-\ref{thm:scalarclassicalsolution}. In Section \ref{sec:nspeciescase}, we generalize the approach to the general $n$-species system case: we first state the main theorems, and then follow a similar organisation to Section \ref{sec:scalarcase}. In Section \ref{sec:apps}, we discuss some relevant applications of our results to biological systems. Numerical simulations are provided for selected cases using a positivity-preserving finite-volume method \cite{MR3372289, MR4605931}.

\section{Main Results}\label{sect:mr}

\subsection{Hypotheses}

In the present work, we consider the well-posedness of system \eqref{maineq} on the whole space for any dimension $d \geq 1$ for merely bounded kernels, noting that these results hold true for the $d$-dimensional torus with no essential change to the proofs. 

Given a function $g(\cdot)$, we denote by $\tilde g (\cdot) := g(- \cdot)$ its reflection. Our hypotheses on the kernels $K_{ij}$ are as follows.

\begin{hyp}\label{h1}
     $K_{ij} \in L^1 ({\mathbb{R}^d}) \cap L^\infty ({\mathbb{R}^d})$ for all $i,j = 1,\ldots,n$;
\end{hyp}
\begin{hyp}\label{h2}
    there exists $C_{K_{ij}} > 0$ such that for all $\phi(\cdot)$ smooth there holds $\norm{\grad K_{ij} * \phi}_{L^1 ({\mathbb{R}^d})} \leq C_{K_{ij}} \norm{\phi}_{L^1 ({\mathbb{R}^d})}$ or equivalently, as proven below, $K_{ij} \in \BV({\mathbb{R}^d})$ with the total variation $\|\nabla K_{ij}\|_{\text{TV}}<\infty$ for each $i,j=1,\ldots,n$;  
\end{hyp}
\begin{hyp}\label{h3}
    $K_{ij} (\cdot)$ is symmetric about the origin for each $i,j=1,\ldots,n$;
\end{hyp}
\begin{hyp}\label{h4}
    $\grad ( \tilde K_{ij} * K_{ij} ) \in L^2 ({\mathbb{R}^d})$ for all $i,j = 1, \ldots, n$;
\end{hyp}
\begin{hyp}\label{h5}
     $K_{ij}$ are in {\rm detailed balance}, that is, there exists constants $\pi_i > 0$, $i=1,\ldots,n$, such that $\pi_i K_{ij} (\cdot) = \pi_j K_{ji} (\cdot)$ for all $i,j = 1,\ldots,n$;
\end{hyp}
\begin{hyp}\label{h6}
     $K_{ij}$ is compactly supported for each $i,j = 1,\ldots,n$.
\end{hyp}
We briefly comment on assumptions that are not standard.\\
\phantom{...}\\
\underline{Hypothesis \textbf{\ref{h2}}.} We recall from \cite{MR1857292} that $K \in \BV(\Rd)$ means that its distributional gradient $\nabla K$ is a bounded measure, i.e.
\begin{equation}\label{eq:TV_cond}
\|\nabla K \|_{TV} = \sup_{\varphi \in C^1(\Rd),\,  \|\varphi\|_{\infty}\leq 1} \int_{\Rd} K\, \DIV  \varphi \diff x < \infty.
\end{equation}
For example, when $K = \mathds{1}_{U}$ and $U \subset \Rd$ is a bounded set with smooth boundary, the condition is satisfied because
$$
\|\nabla K \|_{TV} := \sup_{\varphi \in C^1(\Rd),\, \|\varphi\|_{\infty}\leq 1} \int_{\partial U} \langle \varphi, {\bf{n}}\rangle \diff S \leq |\partial U| < \infty. 
$$
Let us check the equivalence between the statements in hypothesis \textbf{\ref{h2}}. Indeed, let $\psi \in L^{\infty}(\Rd)$ with $\|\psi\|_{L^{\infty}(\Rd)} \leq 1$ and assume \eqref{eq:TV_cond} holds. Then,
$$
\int_{\Rd} \grad K * \phi \, \psi \diff x = - \int_{\Rd} K \, \DIV \phi * \psi \diff x \leq  \|\nabla K \|_{\text{TV}}\, \| \phi * \psi\|_{L^{\infty}(\Rd)} \leq  \|\nabla K \|_{\text{TV}}\, \| \phi \|_{L^1(\Rd)}.
$$
Taking supremum over all $\psi \in L^{\infty}(\Rd)$ with $\|\psi\|_{L^{\infty}(\Rd)} \leq 1$ we get 
\begin{equation}\label{eq:assumption_H2_with_explicit_constant}
\norm{\grad K * \phi}_{L^1 ({\mathbb{R}^d})} \leq \| \nabla K\|_{TV}\, \norm{\phi}_{L^1 ({\mathbb{R}^d})}.
\end{equation}
Conversely, assume this condition holds. Let $\{\omega_{\varepsilon}\}_{\varepsilon>0}$ be a standard mollifier. We have
$$
\int_{\Rd} K\ast \omega_{\varepsilon} \DIV  \varphi \diff x =\int_{\Rd} \nabla K \ast \omega_{\varepsilon} \, \varphi \diff x  \leq C_{K} \, \|\varphi\|_{L^{\infty}(\Rd)} 
$$
so passing to the limit $\varepsilon \to 0$ and taking supremum over all $\varphi \in C^1(\Rd)$ with $\|\varphi\|_{L^{\infty}(\Rd)} \leq 1$ we deduce \eqref{eq:TV_cond}.\\

\underline{Hypothesis \textbf{\ref{h4}}} can be better viewed in the Fourier space. Indeed, with $K = K_{ij}$ and $\widehat{K}$ its Fourier transform, it is sufficient by Plancherel's theorem to check that there holds
\begin{equation}\label{eq:simpler_version_of_H4}
\int_{|\xi|>1} |\xi|^2 \, |\widehat{K}(\xi)|^4 \diff \xi < \infty, 
\end{equation}
where we ignore the set $\{|\xi|\leq 1\}$ as the Fourier transform of an integrable function is bounded. Condition \eqref{eq:simpler_version_of_H4} is satisfied for $K(x) = \mathds{1}_{Q}$ where $Q \subset \Rd$ is a bounded cube since one can easily compute $|\widehat{K}(\xi)| \approx {C}\,{
|\xi|^{-d}}$ for large $\xi$. Condition \eqref{eq:simpler_version_of_H4} is also satisfied by $K(x) = \mathds{1}_{B_r}$ where $B_r \subset \Rd$ is a ball of radius $r$ since $\widehat{K}(\xi) = r^{d/2}\,|\xi|^{-d/2}\, J_{d/2}(r\,|\xi|)$ where $J_{d/2}$ is a Bessel function of the first kind (see, e.g., \cite[Lemma 12.2]{wendland2004scattered}) and it is known that for large arguments $\xi$ we have $|J_{d/2}(\xi)| \approx C\,|\xi|^{-1/2}$ (see, e.g., \cite[Prop. 5.6]{wendland2004scattered}).\\

We also comment that hypothesis \textbf{\ref{h4}} is implied by \textbf{\ref{h1}} and \textbf{\ref{h2}}. Indeed, as above, we consider function $K \ast K \ast \omega_{\varepsilon}$ which is clearly differentiable. Let $\psi$ be such that $\|\psi\|_{L^2(\Rd)} \leq 1$. Then,
\begin{align*}
\left|\int_{\Rd} \nabla K \ast K \ast \omega_{\varepsilon} \, \psi \diff x\right| &= \left|\int_{\Rd} K \, \nabla K \ast \omega_{\varepsilon} \ast \psi \diff x\right| \leq \|K\|_{L^2(\Rd)} \, \| \nabla K \ast \omega_{\varepsilon} \ast \psi \|_{L^2(\Rd)} \\[2mm] &\leq  \|K\|_{L^2(\Rd)} \, \| \nabla K \ast \omega_{\varepsilon}\|_{L^1(\Rd)} \leq \|K\|_{L^2(\Rd)} \, \|\nabla K\|_{\text{TV}} 
\end{align*}
by \eqref{eq:assumption_H2_with_explicit_constant}. It follows that $\|\nabla K \ast K \ast \omega_{\varepsilon}\|_{L^2(\Rd)} \leq \|K\|_{L^2(\Rd)} \, \|\nabla K\|_{\text{TV}}$ so that passing to the limit $\varepsilon\to0$ we deduce that $\nabla K \ast K$ exists as a function in $L^2(\Rd)$.

\underline{Hypothesis \textup{\textbf{\ref{h5}}}}: in the scalar equation, this condition is superfluous. For $n\geq2$ interacting species, it is a moderate generalization of a simpler symmetry condition $K_{ij} = K_{ji}$ which ensures that the system has a gradient flow structure \cite{jungel2022nonlocal}. A detailed balance condition has also been used to study local cross-diffusion systems, see e.g., \cite{ChenDausJungel2018, DausDesvillettesDietert2019}. We explore this condition in more detail in Section \ref{sec:apps}.

We can roughly divide our results into the following:

\begin{itemize}
    \item[i)] existence of a global weak solution for small initial mass under hypothesis \textup{\textbf{\ref{h1}}} only, and for arbitrary initial mass under hypotheses \textup{\textbf{\ref{h1}}}-\textup{\textbf{\ref{h3}}};
    \item[ii)] global existence of a unique, classical solution under the additional hypothesis \textup{\textbf{\ref{h4}}} and \textup{\textbf{\ref{h6}}}; 
    \item[iii)] generalization of these results to treat the $n$-species system.
\end{itemize}

Throughout the manuscript we always assume that the initial data satisfies the mild condition
$$
0\lneqq u_{i0} \in L^1(\mathbb{R}^d),
$$ 
and has finite entropy and finite second moment (defined in \eqref{q:entropy} and \eqref{eq:second_moment}, respectively) for each $i=1,\ldots,n$. Formally, for a smooth, nonnegative solution $\mathbf{u} = (u_1, \ldots, u_2)$, this problem enjoys a conservation of mass $m_i$ for each component $u_i$:
\begin{align*}
    \frac{{\rm d}}{{\rm d}t} \norm{u_i(t,\cdot)}_{L^1 ({\mathbb{R}^d})} = 0 \Rightarrow \norm{u_i (t,\cdot)}_{L^1 ({\mathbb{R}^d})} = \norm{u_{i0}}_{L^1 ({\mathbb{R}^d})} =: m_i > 0,
\end{align*}
for all $t>0$. In the scalar case $n=1$, we simply write $m_1 = m$. 

\subsection{Statement of Main Results}

Our main results for the scalar equation are as follows.
\begin{theorem}[Existence of global weak solution for scalar equation with small mass]\label{thm:existsmallmassscalar}
    Assume \textup{\textbf{\ref{h1}}} holds and that 
    \begin{align}\label{const:c1}
            m \norm{K}_{L^\infty(\mathbb{R}^d)} < D,
    \end{align}
where $m := \norm{u_0}_{L^1(\mathbb{R}^d)}$. Then, there exists a global weak solution $u \geq 0$ in $Q_T$ solving problem \eqref{eq:mainscalar} in the sense of Definition \ref{def:weaksoln}. Moreover, if $u_0 \in L^p({\mathbb{R}^d})$ then $u \in L^\infty(0,T; L^p({\mathbb{R}^d}))$ for any $1 \leq p < \infty$, and if $u_0 \in L^2({\mathbb{R}^d})$ then $\grad u \in L^2(0,T; L^2({\mathbb{R}^d}))$. 
\end{theorem}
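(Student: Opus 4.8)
\emph{Overall strategy.} The plan is to obtain $u$ as a limit of solutions of regularized problems, the decisive point being an entropy estimate in which the merely bounded (and possibly sign-changing, non positive-definite) kernel $K$ is controlled by a convolution Cauchy--Schwarz identity that reproduces exactly the threshold \eqref{const:c1}. Set $K_\varepsilon:=K\ast\omega_\varepsilon$ for a standard mollifier $\omega_\varepsilon$; then $\norm{K_\varepsilon}_{L^1(\Rd)}\le\norm{K}_{L^1(\Rd)}$ and $\norm{K_\varepsilon}_{L^\infty(\Rd)}\le\norm{K}_{L^\infty(\Rd)}$, so \eqref{const:c1} is preserved. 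For fixed $\varepsilon$ the velocity $\grad(K_\varepsilon\ast u)$ is a smooth vector field bounded in terms of $\norm{u}_{L^1(\Rd)}$, and the regularized equation $\p_t u_\varepsilon=\grad\cdot(D\grad u_\varepsilon+u_\varepsilon\grad(K_\varepsilon\ast u_\varepsilon))$ with datum $u_0$ admits a global, classical, strictly positive solution $u_\varepsilon$ with $\norm{u_\varepsilon(t,\cdot)}_{L^1(\Rd)}=m$ for all $t$. All the estimates below are to be derived for $u_\varepsilon$ uniformly in $\varepsilon$.

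\emph{The entropy estimate (main step).} Differentiating the entropy and using $\grad(K_\varepsilon\ast u_\varepsilon)=K_\varepsilon\ast\grad u_\varepsilon$ one gets
\begin{equation*}
\frac{\diff}{\diff t}\int_{\Rd} u_\varepsilon\log u_\varepsilon \;+\; D\int_{\Rd}\frac{\as{\grad u_\varepsilon}^2}{u_\varepsilon} \;=\; -\int_{\Rd}\grad u_\varepsilon\cdot(K_\varepsilon\ast\grad u_\varepsilon).
\end{equation*}
Factorising $K_\varepsilon(x-y)\grad u_\varepsilon(y)=\bigl(\as{K_\varepsilon(x-y)}^{1/2}u_\varepsilon(y)^{1/2}\bigr)\bigl(\as{K_\varepsilon(x-y)}^{1/2}u_\varepsilon(y)^{-1/2}\grad u_\varepsilon(y)\bigr)$ and applying Cauchy--Schwarz inside the convolution gives the pointwise bound $\as{K_\varepsilon\ast\grad u_\varepsilon}^2\le(\as{K_\varepsilon}\ast u_\varepsilon)\bigl(\as{K_\varepsilon}\ast\tfrac{\as{\grad u_\varepsilon}^2}{u_\varepsilon}\bigr)\le m\,\norm{K}_{L^\infty(\Rd)}\bigl(\as{K_\varepsilon}\ast\tfrac{\as{\grad u_\varepsilon}^2}{u_\varepsilon}\bigr)$; multiplying by $u_\varepsilon$, integrating, using Fubini and $\norm{\as{K_\varepsilon}\ast u_\varepsilon}_{L^\infty(\Rd)}\le m\norm{K}_{L^\infty(\Rd)}$ yields
\begin{equation*}
\int_{\Rd}u_\varepsilon\,\as{K_\varepsilon\ast\grad u_\varepsilon}^2 \;\le\; \bigl(m\,\norm{K}_{L^\infty(\Rd)}\bigr)^{2}\int_{\Rd}\frac{\as{\grad u_\varepsilon}^2}{u_\varepsilon},
\end{equation*}
so that a further Cauchy--Schwarz bounds the right-hand side of the entropy identity by $m\norm{K}_{L^\infty(\Rd)}\int_{\Rd}\tfrac{\as{\grad u_\varepsilon}^2}{u_\varepsilon}$, and hence
\begin{equation*}
\frac{\diff}{\diff t}\int_{\Rd} u_\varepsilon\log u_\varepsilon \;+\; \bigl(D-m\,\norm{K}_{L^\infty(\Rd)}\bigr)\int_{\Rd}\frac{\as{\grad u_\varepsilon}^2}{u_\varepsilon}\;\le\;0,
\end{equation*}
which is exactly where \eqref{const:c1} enters. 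The same convolution bound applied to $\int x\cdot u_\varepsilon\grad(K_\varepsilon\ast u_\varepsilon)$ controls $\tfrac{\diff}{\diff t}\int\as{x}^2u_\varepsilon$; running this together with the entropy inequality in a simultaneous Grönwall argument keeps the second moment bounded on $[0,T]$, which in turn bounds $\int u_\varepsilon\log u_\varepsilon$ from below, giving $\int_0^T\!\int_{\Rd}\tfrac{\as{\grad u_\varepsilon}^2}{u_\varepsilon}\le\mathcal D<\infty$ uniformly in $\varepsilon$; equivalently $\sqrt{u_\varepsilon}$ is bounded in $L^2(0,T;H^1(\Rd))$.

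\emph{Improved estimates.} With the dissipation integral $\mathcal D$ in hand no further smallness is needed. Testing with $u_\varepsilon^{p-1}$, applying a Cauchy--Schwarz that isolates $\int u_\varepsilon^{p-2}\as{\grad u_\varepsilon}^2$, then the pointwise bound $\as{K_\varepsilon\ast\grad u_\varepsilon}^2\le m\norm{K}_{L^\infty(\Rd)}\bigl(\as{K_\varepsilon}\ast\tfrac{\as{\grad u_\varepsilon}^2}{u_\varepsilon}\bigr)$ and Young's inequality leads to
\begin{equation*}
\frac{\diff}{\diff t}\norm{u_\varepsilon}_{L^p(\Rd)}^p \;\le\; C_{p,D}\,m\,\norm{K}_{L^\infty(\Rd)}^2\Bigl(\int_{\Rd}\tfrac{\as{\grad u_\varepsilon}^2}{u_\varepsilon}\Bigr)\norm{u_\varepsilon}_{L^p(\Rd)}^p ,
\end{equation*}
so Grönwall with $\int_0^T\!\int_{\Rd}\tfrac{\as{\grad u_\varepsilon}^2}{u_\varepsilon}\le\mathcal D$ bounds $u_\varepsilon$ in $L^\infty(0,T;L^p(\Rd))$ for every $p<\infty$; retaining the diffusion term when $p=2$ and integrating in time also bounds $\grad u_\varepsilon$ in $L^2(0,T;L^2(\Rd))$. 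Consequently $\p_t u_\varepsilon$ is bounded in $L^2(0,T;H^{-1}(\Rd))$, so by the Aubin--Lions lemma $u_\varepsilon\to u$ strongly in $L^2(0,T;L^2_{\mathrm{loc}}(\Rd))$ and a.e., while $\grad u_\varepsilon\rightharpoonup\grad u$ in $L^2$. Since $K_\varepsilon\to K$ in $L^1(\Rd)$ one checks $K_\varepsilon\ast\grad u_\varepsilon\rightharpoonup K\ast\grad u$ in $L^2$, so the nonlocal nonlinearity $u_\varepsilon\,(K_\varepsilon\ast\grad u_\varepsilon)$ passes to the limit (strong times weak), and $u\ge0$ solves \eqref{eq:mainscalar} in the sense of Definition \ref{def:weaksoln} with the stated regularity, the bounds being inherited by lower semicontinuity.

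\emph{Main obstacle.} The delicate point is the entropy estimate: controlling $\int\grad u\cdot(K\ast\grad u)$ when $K$ has no derivative and is not positive (semi-)definite, so that neither the Cauchy--Lipschitz/renormalization theory nor a Fourier/positivity argument applies. The convolution Cauchy--Schwarz identity above is what makes it work, and it is striking that it produces precisely the natural threshold $m\norm{K}_{L^\infty(\Rd)}<D$. A secondary, more technical issue is the rigorous simultaneous closure of the entropy and second-moment bounds and the justification of all the manipulations at the regularized level before sending $\varepsilon\to0$.
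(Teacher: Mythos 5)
Your proof is correct and follows essentially the same strategy as the paper: regularize the kernel, derive the entropy dissipation inequality with effective coefficient $D-m\norm{K}_{L^\infty}$, run a simultaneous Gr\"onwall argument with the second moment to bound $\int_0^T\!\int \as{\grad u_\varepsilon}^2/u_\varepsilon$, bootstrap to $L^\infty_tL^p_x$ and $L^2_tH^1_x$ bounds, and pass to the limit via Aubin--Lions. The only stylistic difference is your pointwise convolution Cauchy--Schwarz in the entropy step, where the paper uses the shorter chain $\as{\int\grad u\cdot(K\ast\grad u)}\le\norm{K}_{L^\infty}\norm{\grad u}_{L^1}^2\le m\norm{K}_{L^\infty}\int\as{\grad u}^2/u$; both yield the same sharp threshold.
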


\begin{theorem}[Existence of global weak solution for scalar equation with arbitrary mass]\label{thm:existarbmassscalar}
    Assume \textup{\textbf{\ref{h1}}}-\textup{\textbf{\ref{h3}}} hold. Then, there exists a global weak solution $u\geq0$ in $Q_T$ solving problem \eqref{eq:mainscalar} in the sense of Definition \ref{def:weaksoln}. Moreover, if $u_0 \in L^p({\mathbb{R}^d})$ then $u \in L^\infty(0,T; L^p({\mathbb{R}^d}))$ for any $1 \leq p < \infty$, and if $u_0 \in L^2({\mathbb{R}^d})$ then $\grad u \in L^2(0,T; L^2({\mathbb{R}^d}))$.
\end{theorem}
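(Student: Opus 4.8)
The plan is to construct the solution by regularizing the kernel and passing to the limit, following the same scheme as in Theorem~\ref{thm:existsmallmassscalar}, but replacing the smallness condition \eqref{const:c1} by the a priori bounds that come from the dissipation of the free energy; these are available precisely because $K$ is symmetric (hypothesis~\textbf{\ref{h3}}).

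First I would mollify: set $K_{\varepsilon} := \omega_{\varepsilon} \ast K$ for a standard even mollifier $\omega_{\varepsilon}$. Then $K_{\varepsilon} \in C^{\infty}(\Rd) \cap W^{2,\infty}(\Rd)$ is symmetric, $\norm{K_{\varepsilon}}_{L^{\infty}(\Rd)} \leq \norm{K}_{L^{\infty}(\Rd)}$, and $\norm{\grad K_{\varepsilon}}_{L^1(\Rd)} \leq \norm{\grad K}_{\mathrm{TV}}$, so that $\norm{\grad K_{\varepsilon} \ast v}_{L^q(\Rd)} \leq \norm{\grad K}_{\mathrm{TV}}\,\norm{v}_{L^q(\Rd)}$ for every $q \in [1,\infty]$ by Young's inequality (the case $q=1$ being \eqref{eq:assumption_H2_with_explicit_constant}); moreover $K_{\varepsilon} \to K$ in $L^1(\Rd)$ and a.e. For each $\varepsilon$, equation \eqref{eq:mainscalar} with kernel $K_{\varepsilon}$ admits a global smooth nonnegative solution $u_{\varepsilon}$ by the classical theory for smooth bounded kernels, and $\norm{u_{\varepsilon}(t)}_{L^1(\Rd)} = m$ for all $t$.

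The crux is a family of $\varepsilon$-uniform estimates on $u_{\varepsilon}$. Since $K_{\varepsilon}$ is symmetric, the free energy $\mathcal{F}_{\varepsilon}[u] := D \int u\log u \diff x + \tfrac12 \iint K_{\varepsilon}(x-y)\,u(x)\,u(y)\diff x \diff y$ is dissipated along smooth solutions:
$$
\mathcal{F}_{\varepsilon}[u_{\varepsilon}(t)] + \int_{0}^{t}\!\!\int_{\Rd} u_{\varepsilon}\,\as{\grad\bigl(D\log u_{\varepsilon} + K_{\varepsilon}\ast u_{\varepsilon}\bigr)}^{2}\diff x \diff s = \mathcal{F}_{\varepsilon}[u_{0}].
$$
Because $\as{\tfrac12\iint K_{\varepsilon}u_{\varepsilon}u_{\varepsilon}} \leq \tfrac12\norm{K}_{L^{\infty}(\Rd)}m^{2}$ and the negative part of the entropy is controlled by the second moment of $u_{\varepsilon}$ (which I would propagate by a Gr\"onwall estimate using \textbf{\ref{h1}}), this yields uniform bounds on $\sup_{t\le T}\int u_{\varepsilon}\as{\log u_{\varepsilon}}\diff x$ and on the dissipation integral. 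Using $\as{a+b}^{2}\ge \tfrac12\as{a}^{2} - \as{b}^{2}$, the dissipation dominates $2D^{2}\int_{0}^{T}\!\!\int \as{\grad\sqrt{u_{\varepsilon}}}^{2} - \int_{0}^{T}\!\!\int u_{\varepsilon}\as{\grad K_{\varepsilon}\ast u_{\varepsilon}}^{2}$, so a uniform Fisher-information bound follows once the defect $\int_{0}^{T}\!\!\int u_{\varepsilon}\as{\grad K_{\varepsilon}\ast u_{\varepsilon}}^{2}$ is controlled. That defect, and the nonlocal drift in the $L^{p}$ energy identity
$$
\tfrac1p\,\tfrac{\diff}{\diff t}\norm{u_{\varepsilon}}_{L^{p}(\Rd)}^{p} + \tfrac{4D(p-1)}{p^{2}}\norm{\grad u_{\varepsilon}^{p/2}}_{L^{2}(\Rd)}^{2} = -(p-1)\int u_{\varepsilon}^{p-1}\,\grad u_{\varepsilon}\cdot\grad K_{\varepsilon}\ast u_{\varepsilon}\diff x ,
$$
are both absorbed into the diffusive term by placing $\grad K_{\varepsilon}\ast u_{\varepsilon}$ in an $L^{q}$ space via the estimate above, interpolating $u_{\varepsilon}^{p/2}$ by Gagliardo--Nirenberg, and applying Young's inequality; a bootstrap seeded by the Fisher-information bound then gives $u_{\varepsilon}\in L^{\infty}(0,T;L^{p}(\Rd))$ for every $p<\infty$ and hence $\grad u_{\varepsilon}\in L^{2}(0,T;L^{2}(\Rd))$, with constants depending only on $m$, $D$, $\norm{K}_{L^{\infty}(\Rd)}$, $\norm{\grad K}_{\mathrm{TV}}$, $T$, and the entropy and second moment of $u_{0}$ --- and not on $m$ being small. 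This replacement of \eqref{const:c1} by the free-energy dissipation is the new ingredient, and closing the $L^{p}$ estimates uniformly is the step I expect to require the most care.

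Finally, the passage to the limit is the same as in the proof of Theorem~\ref{thm:existsmallmassscalar}. From $u_{\varepsilon}$ bounded in $L^{\infty}(0,T;L^{2}(\Rd))\cap L^{2}(0,T;H^{1}(\Rd))$ and $\p_{t}u_{\varepsilon} = \grad\cdot\bigl(D\grad u_{\varepsilon} + u_{\varepsilon}\grad K_{\varepsilon}\ast u_{\varepsilon}\bigr)$ bounded in $L^{2}(0,T;H^{-1}(\Rd))$, the Aubin--Lions--Simon lemma gives a subsequence with $u_{\varepsilon}\to u$ strongly in $L^{2}_{\mathrm{loc}}((0,T)\times\Rd)$ and a.e., $\grad u_{\varepsilon}\rightharpoonup\grad u$ weakly in $L^{2}$, and $\grad K_{\varepsilon}\ast u_{\varepsilon} = K_{\varepsilon}\ast\grad u_{\varepsilon}\rightharpoonup \grad K\ast u$ weakly in $L^{2}$; the quadratic drift then converges, $u_{\varepsilon}\,\grad K_{\varepsilon}\ast u_{\varepsilon}\rightharpoonup u\,\grad K\ast u$, as a product of a strongly and a weakly convergent sequence. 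Passing to the limit in Definition~\ref{def:weaksoln} yields a weak solution; nonnegativity, the claimed integrability, and --- using the uniform second-moment bound to rule out escape of mass --- $\norm{u(t)}_{L^{1}(\Rd)} = m$ pass to the limit by Fatou and weak lower semicontinuity, and since all bounds are finite on $[0,T]$ for every $T$ the solution is global.
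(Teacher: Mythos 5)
Your overall architecture (mollify, free-energy dissipation, entropy and second-moment control, Aubin--Lions compactness) matches the paper's, but the pivotal step --- extracting a uniform gradient bound from the dissipation \emph{without} a smallness condition --- is not actually carried out, and the route you sketch for it does not close. You propose to write $\as{f}^2 \geq 2D^2\as{\grad\sqrt{u_\varepsilon}}^2 - u_\varepsilon\as{\grad K_\varepsilon\ast u_\varepsilon}^2$ and then ``absorb the defect into the diffusive term.'' At that stage the only available bounds on the defect are (i) $\int u_\varepsilon\as{\grad K_\varepsilon\ast u_\varepsilon}^2\diff x \leq m\,\norm{K}_{L^\infty(\Rd)}^2\norm{\grad u_\varepsilon}_{L^1(\Rd)}^2 \leq 4m^2\norm{K}_{L^\infty(\Rd)}^2\norm{\grad\sqrt{u_\varepsilon}}_{L^2(\Rd)}^2$, whose absorption into $2D^2\norm{\grad\sqrt{u_\varepsilon}}_{L^2(\Rd)}^2$ requires $\sqrt{2}\,m\norm{K}_{L^\infty(\Rd)} < D$ --- essentially the small-mass condition \eqref{const:c1} you are trying to remove; or (ii) $\norm{\grad K_\varepsilon\ast u_\varepsilon}_{L^q(\Rd)}\leq \norm{\grad K}_{\mathrm{TV}}\norm{u_\varepsilon}_{L^q(\Rd)}$, which requires $L^q$ bounds on $u_\varepsilon$ that are themselves seeded by the Fisher-information bound you are trying to prove, so the bootstrap is circular. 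A Gagliardo--Nirenberg count shows the resulting cubic term carries the power exactly $2$ of $\norm{\grad\sqrt{u_\varepsilon}}_{L^2(\Rd)}$ in $d=1$ and a larger power in $d\geq 2$, so Young's inequality never absorbs it with a harmless constant. You flag this as ``the step I expect to require the most care,'' and indeed it is the whole content of the theorem.

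The paper's mechanism is different and is the reason hypothesis \textup{\textbf{\ref{h2}}} appears. One writes $D\grad u = \sqrt{u}\,f - u\,\grad K\ast u$ and estimates $\grad u$ in $L^2(0,T;L^1(\Rd))$ --- not the weighted $L^2$ defect. The term $u\,\grad K\ast u$ is split at the level set $\{u\leq\ell\}$: there one uses \textup{\textbf{\ref{h2}}} in the form $\norm{\grad K\ast u}_{L^1(\Rd)}\leq C_K\norm{u}_{L^1(\Rd)}$ to get the bound $C_K\,\ell\, m$; on $\{u>\ell\}$ one writes $u\leq u\as{\log u}/\log\ell$ and combines the entropy bound with $\norm{\grad K\ast u}_{L^\infty(\Rd)}\leq\norm{K}_{L^\infty(\Rd)}\norm{\grad u}_{L^1(\Rd)}$, producing a coefficient $C/\log\ell$ in front of $\norm{\grad u}_{L^2(0,T;L^1(\Rd))}$ that is made arbitrarily small by taking $\ell$ large. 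This is what permits absorption without any smallness of $m$; only afterwards does one recover $\sqrt{u}\,\grad K\ast u\in L^2(Q_T)$, then $\grad\sqrt{u}\in L^2(Q_T)$ from $2D\grad\sqrt{u}=f-\sqrt{u}\,\grad K\ast u$, and finally the $L^\infty_t L^p_x$ and $L^2_t H^1_x$ bounds by the Gr\"onwall argument of Lemma \ref{lem:iscalar2}. Your proposal never uses \textup{\textbf{\ref{h2}}} in this quantitative way, so the arbitrary-mass case is not established.
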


\begin{theorem}[Existence of unique strong solution]\label{thm:scalarfinal}
Assume that the initial data satisfies $u_0 \in L^{\infty}(\mathbb{R}^d)$, $\grad u_0 \in L^2(\mathbb{R}^d)$ as well as $K\ast u_0 \in W^{2,p}(\Rd)$ for some $p>2$. Under the assumptions of Theorem \ref{thm:existarbmassscalar} or under the assumptions of Theorem \ref{thm:existsmallmassscalar} together with \textup{\textbf{\ref{h4}}}, the obtained global weak solution is the unique, global strong solution solving problem \eqref{eq:mainscalar} in the sense of Definition \ref{def:strongsolution}.
\end{theorem}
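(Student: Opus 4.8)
The strategy splits into two movements: promote the weak solution of Theorem~\ref{thm:existarbmassscalar} (resp.\ of Theorem~\ref{thm:existsmallmassscalar} together with \textbf{\ref{h4}}) to a strong solution by bootstrapping regularity, and then prove uniqueness in the strong class by a Gr\"onwall estimate on the difference of two solutions. The starting point, common to both regimes, is that the weak solution already satisfies $u\in L^\infty(0,T;L^p(\Rd))$ for every $p<\infty$ and $\grad u\in L^2(0,T;L^2(\Rd))$; the genuinely new ingredients are the data assumptions $\grad u_0\in L^2(\Rd)$ and $K\ast u_0\in W^{2,p}(\Rd)$ with $p>2$, together with Hypothesis~\textbf{\ref{h4}} in the small-mass branch. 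Recall from the discussion after \textbf{\ref{h6}} that \textbf{\ref{h4}} is automatic once \textbf{\ref{h1}}--\textbf{\ref{h2}} hold, which is precisely why it must be imposed separately only when $K$ is not assumed to be of bounded variation.

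Step 1 (regularity of the velocity field $V:=\grad(K\ast u)$). Setting $v:=K\ast u$ and convolving \eqref{eq:mainscalar} with $K$ produces the linear heat equation
\begin{equation*}
\p_t v - D\,\Delta v \;=\; \grad\cdot\bigl(K\ast(u\,V)\bigr), \qquad v(0,\cdot)=K\ast u_0\in W^{2,p}(\Rd).
\end{equation*}
Since $\grad u\in L^2(0,T;L^2)$ and $K\in L^1\cap L^\infty$, one has $V=K\ast\grad u$ with $\norm{V(t)}_{L^\infty}\le\norm{K}_{L^2}\norm{\grad u(t)}_{L^2}$, so $V\in L^2(0,T;L^\infty(\Rd))$. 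Feeding this into the equation for $v$, estimating the forcing and applying parabolic $L^2$--$L^p$ regularity for the heat semigroup with the regular datum $v(0,\cdot)\in W^{2,p}$, I expect to propagate $v\in L^2(0,T;W^{2,p}(\Rd))$, equivalently $\Delta v=\grad\cdot V\in L^2(0,T;L^p(\Rd))$ — a bound which, as noted below, is coupled to the $H^1$-estimate of Step~2 and must be closed jointly. Estimating the forcing $\grad\cdot(K\ast(u\,V))$ is exactly where the hypothesis sets diverge: under \textbf{\ref{h1}}--\textbf{\ref{h3}} one moves a derivative onto $K$ and uses $\norm{\grad K\ast\phi}_{L^q}\le\norm{\grad K}_{TV}\norm{\phi}_{L^q}$ from \textbf{\ref{h2}}; in the small-mass branch this is unavailable, and its role is taken by \textbf{\ref{h4}} through the bound $\norm{\grad(\tilde K\ast K\ast u)}_{L^\infty}=\norm{\tilde K\ast V}_{L^\infty}\le\norm{\grad(\tilde K\ast K)}_{L^2}\,\norm{u}_{L^2}$ combined with $u\in L^\infty(0,T;L^2)$ and the smallness condition \eqref{const:c1}. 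In either case this step must be run on the regularizing approximations from the existence proof so that all manipulations are licit, the bounds being inherited in the limit.

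Step 2 (from weak to strong, and uniqueness). With $V\in L^2(0,T;L^\infty)$ and $\Delta v\in L^2(0,T;L^p)$ available, testing \eqref{eq:mainscalar} against $\p_t u$ yields
\begin{equation*}
\frac{D}{2}\frac{\diff}{\diff t}\norm{\grad u}_{L^2}^2 + \norm{\p_t u}_{L^2}^2 \;=\; \int_{\Rd}\p_t u\,\bigl(V\cdot\grad u\bigr)\diff x + \int_{\Rd}\p_t u\,\bigl(u\,\Delta v\bigr)\diff x,
\end{equation*}
and bounding the first term by $\varepsilon\norm{\p_t u}_{L^2}^2+C\norm{V}_{L^\infty}^2\norm{\grad u}_{L^2}^2$ and the second by $\varepsilon\norm{\p_t u}_{L^2}^2+C\norm{u}_{L^{2p/(p-2)}}^2\norm{\Delta v}_{L^p}^2$, Gr\"onwall (using $\norm{V}_{L^\infty}^2\in L^1(0,T)$, $u\in L^\infty(0,T;L^{2p/(p-2)})$, $\Delta v\in L^2(0,T;L^p)$ and $\grad u_0\in L^2$) gives $u\in L^\infty(0,T;H^1)$ and $\p_t u\in L^2(0,T;L^2)$; then $\Delta u=\tfrac1D(\p_t u-V\cdot\grad u-u\,\Delta v)\in L^2(0,T;L^2)$, so $u\in L^2(0,T;H^2)$ and, together with $u\ge0$ and conservation of mass, $u$ is the strong solution of Definition~\ref{def:strongsolution}. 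For uniqueness, let $u_1,u_2$ be two such solutions with the same data, $w:=u_1-u_2$, $v_i:=K\ast u_i$; then $\p_t w=D\Delta w+\grad\cdot(w\,V_1)+\grad\cdot\bigl(u_2\,\grad(K\ast w)\bigr)$, and testing with $w$ gives
\begin{equation*}
\frac12\frac{\diff}{\diff t}\norm{w}_{L^2}^2 + D\norm{\grad w}_{L^2}^2 \;=\; \frac12\int_{\Rd}w^2\,\Delta v_1\diff x - \int_{\Rd}\grad w\cdot u_2\,\bigl(K\ast\grad w\bigr)\diff x,
\end{equation*}
where $\grad(K\ast w)=K\ast\grad w$ is legitimate since $w\in L^2(0,T;H^1)$. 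The first term is handled by H\"older (and Gagliardo--Nirenberg in higher dimensions) using $\Delta v_1\in L^2(0,T;L^p)$; the second by $\norm{K\ast\grad w}_{L^2}\le\norm{K}_{L^1}\norm{\grad w}_{L^2}$ together with the spatial $L^\infty$-control of $u_2$ obtained in Step~2 (via $H^2\hookrightarrow L^\infty$ for $d\le3$, interpolation otherwise), after an $\varepsilon$-absorption into $D\norm{\grad w}^2$ and Gr\"onwall, forcing $w\equiv0$.

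Main obstacle. The difficulty concentrates in Step~1 in the small-mass case: without any bounded-variation bound on $K$, the velocity field $\grad(K\ast u)$ is a priori no smoother than $u$ itself, so the quasilinear map $u\mapsto u\,\grad(K\ast u)$ is not obviously closable, and the $H^1$-bound on $u$ from Step~2 and the $W^{2,p}$-bound on $K\ast u$ from Step~1 depend on one another circularly. Breaking this loop is precisely what Hypothesis~\textbf{\ref{h4}} (the autocorrelation $\tilde K\ast K$ being one derivative smoother than $K$) together with the smallness condition \eqref{const:c1} achieves, while in the arbitrary-mass case the same role is played by the bounded variation of $K$. A secondary, softer point is keeping the uniqueness estimate dimension-free, which forces one to route the drift terms through the $L^2(0,T;H^2)$ regularity of Step~2 rather than through any naive pointwise bound on the solution.
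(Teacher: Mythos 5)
Your overall architecture (higher-order estimates exploiting the extra smoothness of $\tilde K\ast K$, then an energy/Gr\"onwall uniqueness argument) matches the paper's, but two steps fail as written. The most serious is uniqueness: in your estimate the term $\int_{\Rd}\grad w\cdot u_2\,(K\ast\grad w)\diff x$ is bounded by $\norm{u_2}_{L^\infty}\norm{K}_{L^1}\norm{\grad w}_{L^2}^2$, i.e.\ it is \emph{quadratic in} $\norm{\grad w}_{L^2}$ with a constant that has no reason to be smaller than $D$; an ``$\varepsilon$-absorption'' only moves the large constant onto the other copy of $\norm{\grad w}_{L^2}^2$, so the dissipation cannot absorb it and Gr\"onwall on $\norm{w}_{L^2}^2$ does not close. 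This is exactly the obstruction the paper circumvents by working with the mixed local--nonlocal energy $\int_{\Rd}w^2\diff x+C\int_{\Rd}|K\ast w|^2\diff x$: keeping $\int|K\ast\grad w|^2$ on the right of the $L^2$ estimate and testing the difference equation a second time against $\tilde K\ast K\ast w$ produces $\int|\grad(K\ast w)|^2$ as a \emph{dissipation} term (with \textbf{\ref{h4}} giving $\norm{\grad(\tilde K\ast K\ast w)}_{L^\infty}\le\norm{\grad(\tilde K\ast K)}_{L^2}\norm{w}_{L^2}$), and a weighted combination of the two inequalities closes the Gr\"onwall loop. Without this (or, in the arbitrary-mass branch only, the alternative $\norm{K\ast\grad w}_{L^2}=\norm{\grad K\ast w}_{L^2}\le\norm{\grad K}_{\mathrm{TV}}\norm{w}_{L^2}$ from \textbf{\ref{h2}}, which you did not invoke), your uniqueness proof is incomplete.

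Second, Definition~\ref{def:strongsolution} requires a \emph{bounded} weak solution, and your uniqueness argument itself uses $\norm{u_2}_{L^\infty}$; but your route to boundedness ($u\in L^2(0,T;H^2)$ plus $H^2\hookrightarrow L^\infty$) only works for $d\le3$ and in any case yields $L^2$-in-time, not $L^\infty(Q_T)$, control. The paper obtains $u\in L^\infty(Q_T)$ in all dimensions by an Alikakos/Moser iteration on $\norm{u}_{L^p}$, driven by $K\ast\grad u\in L^\infty(Q_T)$ (which follows from $\grad u\in L^\infty(0,T;L^2)$ and $K\in L^2$) together with Nash's inequality; this step is missing from your proposal. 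Finally, in Step~1 your Duhamel scheme for $v=K\ast u$ is genuinely circular in the small-mass branch (the forcing contains $K\ast(u\,\Delta v)$, i.e.\ the unknown $\Delta v$ itself), and you name \textbf{\ref{h4}} as the cure without executing it; the paper instead tests \eqref{eq:mainscalar} against $\tilde K\ast K\ast\Delta u$, whereupon the dangerous term becomes $\int u\,\Delta(K\ast u)\,\bigl(\tilde K\ast\Delta(K\ast u)\bigr)\diff x$ and splits into an $L^2$ factor absorbed by the dissipation and an $L^\infty$ factor $\tilde K\ast\Delta(K\ast u)=\grad(\tilde K\ast K)\ast\grad u$ controlled by \textbf{\ref{h4}} and the already-known bound $\grad u\in L^2(Q_T)$ — no bootstrapping with Step~2 is needed.
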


\begin{theorem}[Existence of unique classical solution]\label{thm:scalarclassicalsolution}
    Assume that the initial data satisfies $u_0 \in C^3(\mathbb{R}^d)\cap L^\infty(\mathbb{R}^d)$ as well as $\nabla u_0 \in L^2(\mathbb{R}^d) \cap L^p(\Rd)$ for some $p>d+2$, and suppose one of the following:
    \begin{enumerate}
        \item in addition to the hypotheses of Theorem \ref{thm:existsmallmassscalar}, \textbf{\textup{\ref{h2}}} and \textbf{\textup{\ref{h6}}} hold;
        \item in addition to the hypotheses of Theorem \ref{thm:existarbmassscalar}, \textbf{\textup{\ref{h6}}} holds;
    \end{enumerate}
Then, the unique global strong solution is the unique, global classical solution solving problem \eqref{eq:mainscalar} in the sense of Definition \ref{def:classicalsolution}. Moreover, the solution is strictly positive over $\mathbb{R}^d$ for all $t>0$.
\end{theorem}

We remind readers that \textbf{\ref{h1}}-\textbf{\ref{h2}} imply \textbf{\ref{h4}}, and so we do not include it in the statement of Theorem \ref{thm:scalarclassicalsolution}. Under the hypotheses of Theorem \ref{thm:existarbmassscalar}, the weak solution is in fact the global strong solution under additional regularity requirements of the initial data; under the hypotheses of Theorem \ref{thm:existsmallmassscalar} only, we need to introduce \textup{\textbf{\ref{h2}}} to bound $\Delta (K*u)$ in $L^p(Q_T)$ so that we may apply the $L^p$-theory of parabolic equations; otherwise, one requires higher integrability of $\Delta u$ a priori. The difference between the two cases in Theorem \ref{thm:scalarclassicalsolution} is the removal of the symmetry condition \textup{\textbf{\ref{h3}}} at the cost of the restriction \eqref{const:c1} on the size of the initial mass $m$. In either case, we introduce \textup{\textbf{\ref{h6}}} to obtain additional regularity and positivity via the maximum principle.

Let us briefly explain the proofs of Theorems \ref{thm:existsmallmassscalar}--\ref{thm:scalarfinal}. The existence results are obtained via compactness, departing from regularized equations. We now discuss the main apriori estimates leading to the proof of Theorem \ref{thm:existsmallmassscalar}. We first analyse the evolution of the entropy functional
\begin{align}\label{q:entropy}
    H[u] := \int_{\mathbb{R}^d} u \log u \, \dx.
\end{align}
More precisely, differentiating $H[u(t)]$ along solutions of \eqref{eq:mainscalar}, one can show that
\begin{align*}
\frac{{\rm d}}{{\rm d}t} H[u(t)] + D\, \int_{\Rd} \frac{|\nabla u|^2}{u} \diff x 
\leq m\, \|K\|_{L^{\infty} (\Rd)} \,  \int_{\Rd} \frac{|\nabla u|^2}{u} \diff x.
\end{align*}
Classical arguments using the evolution of the second moment allow us to estimate the entropy from below and control the entropy in bounded time intervals using the small mass condition.\\

\noindent Theorem \ref{thm:existarbmassscalar} is a more subtle consequence of the gradient flow structure and so we can only prove it for symmetric kernels (see \textbf{\ref{h3}}). More precisely, \eqref{eq:mainscalar} is the Wasserstein gradient flow of the free energy functional, i.e. the weighted sum of the entropy and the interaction energy $\mathcal{K}[u]$ of the system, given by 
\begin{align}\label{q:FreeEnergy}
    \mathcal{F}[u] := D \int_{\mathbb{R}^d} u \log u \, \diff x+ \frac12 \int_{\mathbb{R}^d} u (K*u) \, \diff x=: D H[u]+ \mathcal{K}[u].
\end{align}
The dissipation of $\mathcal{F}[u(t)]$ is given by
$$
\frac{{\rm d}}{{\rm d}t} \mathcal{F}[u(t)] = -\|f(t)\|_{L^2 (\Rd)}^2 \qquad \text{and} \qquad f:= \sqrt{u}\, \grad (D \log u + K  * u).
$$
Note that this allows one to estimate $\nabla u$ in $L^2(0,T; L^1(\mathbb{R}^d))$ since $D \nabla u = f\, \sqrt{u} - u\, \nabla K \ast u$. Indeed, the first term can be bounded using the dissipation of the free energy and the conservation of mass. The second term $u \nabla K \ast u$ can be split into small values of $u$, controlled by the bounded variation of $K$ in \textbf{\ref{h2}}, and large values of $u$, controlled by the uniform equi-integrability estimate from the entropy. We conclude that $\nabla \sqrt{u} \in L^2(Q_T)$, which is precisely the information obtained in the small-mass case.\\

Improved regularity is first based on the novel condition $\nabla K \ast K \in L^2(\mathbb{R}^d)$, cf. \textbf{\ref{h4}}. This allows one to obtain better estimates on the vector field $\nabla K \ast u$ by testing \eqref{eq:mainscalar} with $K \ast K \ast \Delta u$. The main point here is that while $K$ itself may not be differentiable, $K \ast K$ enjoys regularization by convolution and may be differentiable. Once we obtain bounds on $\nabla K \ast u$ in $L^{\infty}(0,T; L^2(\Rd))$ and $\Delta K \ast u$ in $L^2(Q_T)$, we can show $u$ is bounded (see Lemma \ref{lem:scalaresthigherorder}) and uniqueness can be established. The uniqueness result is obtained by usual energy methods with a modification that we study the evolution of the mixed local-nonlocal energy
$$
\mathcal{E}(u):= \int_{\Rd} u^2 \diff x + C\,\int_{\Rd} |K\ast u|^2 \diff x
$$
for some constant $C>0$. Combining two quantities allows us to prove that if $u$ and $v$ are regular solutions to \eqref{eq:mainscalar} then $\partial_t \mathcal{E}(u-v) \leq C(t)\, \mathcal{E}(u-v)$ which implies uniqueness. Under some additional technical requirements on the initial data, we can improve the integrability of $\grad u$ and ultimately apply Schauder estimates to obtain the classical differentiability of the solution.\\

Finally, the methods discussed above are flexible enough to be extended to the case of the $n$-species systems, see Theorems \ref{thm:existsmallmasssystem}-\ref{thm:systemfinal} in Section \ref{sec:nspeciescase}.

\section{The Scalar Equation}\label{sec:scalarcase}

In this section, we will prove well-posedness in the scalar case, Theorems \ref{thm:existsmallmassscalar}-\ref{thm:scalarfinal}. This will simplify the exposition of the general $n$-species case. We start by introducing some notations needed and the notion of weak solution we will work with directly in the general $n$-species case.

\subsection{Preliminaries}

We consider our problem over ${\mathbb{R}^d}$ for $d \geq 1$, and denote by $Q_T := {\mathbb{R}^d} \times (0,T)$ for some $T>0$ fixed. For $1 \leq p \leq \infty$, $L^p ({\mathbb{R}^d})$ denotes the usual Lebesgue space over the spatial domain ${\mathbb{R}^d}$, with $\norm{u}_{L^\infty({\mathbb{R}^d})}$ the essential supremum of $u$ over ${\mathbb{R}^d}$. For $1 \leq p,q < \infty$, $L^q ( 0,T; L^p ({\mathbb{R}^d}) )$ then denotes the typical Lebesgue space over $Q_T$ with norm
\begin{align}
    \norm{v}_{L^q (0,T; L^p ({\mathbb{R}^d}) )} := \left( \int_0 ^T \norm{v (t,\cdot)}_{L^p ({\mathbb{R}^d})} ^q {\rm d}t \right)^{1/q},
\end{align}
and when $1 \leq p \leq q=\infty$ 
\begin{align}
    \norm{v}_{L^\infty (0,T; L^p ({\mathbb{R}^d}) )} := \esssup_{t \in (0,T)} \norm{v(t,\cdot)}_{L^p({\mathbb{R}^d})}.
\end{align}

In cases where $p=q$, for notational brevity we simply write $L^p(Q_T) := L^p(0,T; L^p({\mathbb{R}^d}))$. We denote by $C^{1+\sigma/2,2+\sigma}_{loc} ([0,T] \times \mathbb{R}^d)$ the class a functions which are (locally) twice differentiable in space and once differentiable in time (in the classical sense), with its second order spatial derivatives and first order time derivative being H{\"o}lder continuous for some $\sigma \in (0,1)$. We understand a \textit{weak solution} to problems \eqref{eq:mainscalar}-\eqref{maineq} in the following sense.

\begin{definition}[weak solution]\label{def:weaksoln}
    We call $\mathbf{u} := ( u_1, u_2, \ldots, u_n)$ a \textup{weak solution} to problem \eqref{maineq} corresponding to the initial data $\mathbf{u_0} := (u_{10}, \ldots, u_{n0})$ if 
$$
{u_i} \in L^{\infty}(0,T; L^1(\mathbb{R}^d)), \quad \nabla \sqrt{u_i} \in L^{2}((0,T) \times \Rd),
$$ 
\begin{equation}\label{eq:reg_time_der_value_of_exponent_q}
    \tfrac{\p u_i}{\p t} \in L^{\frac{4}{3}}(0,T; W^{-1,q} (\mathbb{R}^d)) \mbox{ with } q = \begin{cases} 
    \frac{2d}{2d-1} &\mbox{ for } d\neq 2,\\
    \frac{6}{5} &\mbox{ for } d= 2,
    \end{cases}
\end{equation}
    for all $i = 1,\ldots,n$, and for all test functions $\phi_i \in L^4 (0,T; W^{1,q^\prime} ({\mathbb{R}^d}))$, $i=1,\ldots,n$, there holds
\begin{align}\label{defn:weaksolution}
    \int_0^T \left< (u_i)_t , \phi_i \right> \diff t + D_i \iint_{Q_T} \grad u_i \cdot \grad \phi_i \diff x \diff t = - \iint_{Q_T} u_i \sum_{j=1}^n ( K_{ij} * \grad u_j ) \cdot \grad  \phi_i \diff x \diff t,
\end{align}
were $\left< \cdot, \cdot\right>$ denotes the dual pairing between $W^{-1,q}({\mathbb{R}^d})$ and $W^{1,q^\prime} ({\mathbb{R}^d})$ with $1/q + 1/q^\prime = 1$, and the initial data is satisfied in the sense of $W^{-1,q}(\mathbb{R}^d)$. We call $\mathbf{u}$ a \textup{global weak solution} if \eqref{defn:weaksolution} holds for all $T>0$.
\end{definition}

The following lemma shows where the exponent $q$ comes from.

\begin{lemma}\label{lem:exponent_q_weak_formulation_joint_space}
Let $u\in L^{\infty}(0,T; {L^1(\mathbb{R}^d)})$, $\nabla \sqrt{u} \in L^{2}((0,T) \times \Rd)$, and $K \in L^\infty(\mathbb{R}^d)$. Then, $u\, \nabla K\ast u$ and $\nabla u$ belong to the same space $L^{\frac{4}{3}}(0,T; L^{q}(\Rd))$ with $q$ given by \eqref{eq:reg_time_der_value_of_exponent_q} and with the norm being controlled only by the norm of $K$ in $L^{\infty}(\Rd)$ and by the norms of $u$ and $\nabla \sqrt{u}$ in $L^{\infty}(0,T; {L^1(\mathbb{R}^d)})$ and $L^{2}((0,T) \times \Rd)$, respectively.  
\end{lemma}
\begin{proof}
First, assume $d\neq2$. We analyse two terms separately.\\

\underline{Term $u\, \nabla K\ast u$.} By the Gagliardo-Nirenberg inequality, $\sqrt{u} \in L^2(0,T; L^{\frac{2d}{d-2}}(\Rd))$, and so $u$ is in $L^1(0,T; L^{\frac{d}{d-2}}(\Rd))$. Interpolating with $L^{\infty}(0,T; L^1(\Rd))$ we obtain $u \in L^{\frac{1}{\theta}}(0,T; L^{\frac{d}{d-2\theta}}(\Rd))$ for all $\theta \in [0,1]$. Since $K \in L^{\infty}(\Rd)$ and $\nabla u = 2\,\nabla \sqrt{u}\,\sqrt{u}$ is in $L^2(0,T; L^1(\Rd))$, we deduce that $\nabla K \ast u \in L^2(0,T; L^{\infty}(\Rd))$. Finally, by H{\"o}lder's inequality we obtain that $u \,\nabla K\ast u \in L^{\frac{2}{2\theta+1}}(0,T; L^{\frac{d}{d-2\theta}}(\Rd))$. The last step requires $\theta \leq \frac{1}{2}$ so that $\frac{2}{2\theta + 1}\geq 1$.\\

\underline{Term $\nabla u$.} From the computations above, $\sqrt{u} \in L^{\frac{2}{\theta'}}(0,T; L^{\frac{2d}{d-2\theta'}}(\Rd))$ for any $\theta'\in[0,1]$. By writing $\nabla u = 2\,\nabla \sqrt{u}\,\sqrt{u}$ and using H{\"o}lder's inequality we obtain $\nabla u \in L^{\frac{2}{\theta'+1}}(0,T; L^{\frac{d}{d-\theta'}}(\Rd))$ for any $\theta'\in[0,1]$.\\

We now choose $\theta = \frac{1}{4}$ and $\theta'=2\theta$ so that $\nabla u$ and $u \nabla K\ast u$ belong to the same space $L^{\frac{4}{3}}(0,T; L^{\frac{2d}{2d-1}}(\Rd))$. For $d=2$, the regularity $\sqrt{u} \in L^2(0,T; L^{\infty}(\mathbb{R}^2))$ is not attained and we only know $\sqrt{u} \in L^2(0,T; L^{\frac{2}{\delta}}(\mathbb{R}^2))$ for all $\delta \in (0,1]$. Repeating the calculations above we obtain that $\nabla u$ and $u\nabla K\ast u$ belong to the same space $L^{\frac{4}{3}}(0,T; L^{\frac{4}{3+\delta}}(\mathbb{R}^2))$. Choosing $\delta = \frac{1}{3}$ we obtain $L^{\frac{4}{3}}(0,T; L^{\frac{6}{5}}(\mathbb{R}^2))$ as desired.  
\end{proof}

In fact, for many kernels we obtain a \textit{strong} solution to problems \eqref{eq:mainscalar}-\eqref{maineq} in the following sense.
\begin{definition}[strong solution]\label{def:strongsolution}
    We call $\mathbf{u} := ( u_1, u_2, \ldots, u_n)$ a \textup{strong solution} to problem \eqref{eq:mainscalar} corresponding to the initial data $\mathbf{u_0} := (u_{10}, \ldots, u_{n0})$ if it is a bounded weak solution with
    $$
\grad u \in L^\infty(0,T; L^2(\mathbb{R}^d)); \quad \Delta u \in L^2(Q_T),
    $$
    and the initial data is satisfied in the sense of $L^2(\mathbb{R}^d)$. We call the strong solution \textup{global} if it is a strong solution for all $T>0$.
\end{definition}

Finally, we refer to a \textit{classical} solution to problems \eqref{eq:mainscalar}-\eqref{maineq} in the following sense.

\begin{definition}[classical solution]\label{def:classicalsolution}
    We call $\mathbf{u} := ( u_1, u_2, \ldots, u_n)$ a \textup{classical solution} to problem \eqref{eq:mainscalar} corresponding to the initial data $\mathbf{u_0} := (u_{10}, \ldots, u_{n0})$ if
    $$
u_i \in C^{1+\sigma/2, 2+\sigma}_{loc} ([0,T] \times \mathbb{R}^d),
    $$
    for some $\sigma \in (0,1)$. We call the classical solution \textup{global} if it is a classical solution for all $T>0$. 
\end{definition}

We now establish some apriori estimates for small mass and arbitrary mass regimes for problem \eqref{eq:mainscalar}.

\subsection{Apriori estimates}\label{sec:apriorismallmass}

We first establish some apriori estimates under the assumption that we have a classical solution $u(t,x)$ solving problem \eqref{eq:mainscalar}. In addition to the entropy $H[u]$, the interaction energy $\mathcal{K}[u]$, and the free energy functional $\mathcal{F}[u]$ defined in \eqref{q:entropy}-\eqref{q:FreeEnergy}, we also denote the second moment by:
\begin{equation}\label{eq:second_moment}
I[u(t)] = \int_{\mathbb{R}^d} u \as{x}^2 \diff x\,.
\end{equation}

With these estimates, subsequent improvements to these preliminary estimates will follow. For the regime of small mass, we have the following.

\begin{lemma}[Apriori estimates under \textup{\textbf{\ref{h1}}} and small-mass condition \eqref{const:c1}]\label{lem:iscalar}
     Fix $T>0$. Assume \textup{\textbf{\ref{h1}}} and suppose \eqref{const:c1} holds. Then for any smooth, positive solution $u$ to problem \eqref{eq:mainscalar} there holds
    \begin{align}
                  \sup_{t \in (0,T)}I[u(t)] = \sup_{t\in(0,T)} \int_{\mathbb{R}^d} u \magg{x} \diff x&\leq C; \label{est:scalar1.2prime} \\
        \norm{\grad \sqrt{u}}_{L^2(Q_T)}^2 &\leq C ; \label{est:scalar1} \\
        \norm{\sqrt{u}\, \grad K * u}_{L^2 (Q_T)}^2 &\leq  C\label{est:scalar2},\\
        \norm{\grad u}_{L^2(0,T; L^1(\mathbb{R}^d))}   &\leq C;         \label{est:scalar1.123_lembefore} 
    \end{align}
    where $C = C(D,T,c_{1,1}^{-1}, \norm{K}_{L^\infty(\mathbb{R}^d)},d,m,I[u_0],H[u_0])$, $m = \norm{u_0}_{L^1(\mathbb{R}^d)}$ and $c_{1,1} = D - m \norm{K}_{L^\infty(\mathbb{R}^d)}$.
\end{lemma}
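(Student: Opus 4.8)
The plan is to derive the four estimates in a specific order, exploiting the smallness condition \eqref{const:c1} to absorb the interaction term.

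\textbf{Step 1: Second moment estimate \eqref{est:scalar1.2prime}.} I would differentiate $I[u(t)]$ along the flow. Integrating by parts,
$$
\frac{\diff}{\diff t} I[u(t)] = \int_{\Rd} |x|^2\, \grad\cdot(D\grad u + u\grad(K*u))\diff x = -2\int_{\Rd} x\cdot(D\grad u + u\grad(K*u))\diff x.
$$
The first term gives $2dD\, m$ after another integration by parts. The second term is $-2\int x\cdot u\,\grad(K*u)\diff x$; using symmetrization of the convolution (writing it as a double integral in $x,y$ and averaging) one bounds it by $C\,\|\grad K\|_{TV}\, m^2$ or, more crudely without \textbf{\ref{h2}}, by integrating by parts in the convolution variable to move the gradient onto $K$ — but since here we only assume \textbf{\ref{h1}}, the cleaner route is $|\int x\cdot u\,\grad(K*u)| = |\int \grad\cdot(xu)\,(K*u)| \leq (\|u\|_{L^1}+\text{moment bound})\,\|K*u\|_{L^\infty} \lesssim \|K\|_{L^\infty} m^2 + \|K\|_{L^\infty}m\,\|u\|_{L^1}$, wait — one needs $\int |x|\,u\,|\grad K * u|$, and $\grad K * u$ is not controlled by \textbf{\ref{h1}} alone. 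The correct manipulation: $-2\int x u\,\grad(K*u) = 2\int (K*u)\,\DIV(xu) = 2\int(K*u)(d\,u + x\cdot\grad u)$; the term $x\cdot\grad u$ is handled by parts back again. In any case this yields $\frac{\diff}{\diff t}I \leq C(1 + I)$ by Cauchy–Schwarz using $\|K*u\|_{L^\infty}\leq \|K\|_{L^\infty}m$, and Grönwall gives \eqref{est:scalar1.2prime}.

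\textbf{Step 2: Entropy–dissipation inequality and \eqref{est:scalar1}, \eqref{est:scalar2}.} Differentiating $H[u(t)]$,
$$
\frac{\diff}{\diff t}H[u] = \int_{\Rd}(\log u + 1)\,\grad\cdot(D\grad u + u\grad(K*u))\diff x = -\int_{\Rd}\frac{\grad u}{u}\cdot(D\grad u + u\grad(K*u))\diff x,
$$
so that
$$
\frac{\diff}{\diff t}H[u] + 4D\|\grad\sqrt{u}\|_{L^2}^2 = -\int_{\Rd}\grad u\cdot\grad(K*u)\diff x = -\int_{\Rd}2\sqrt{u}\,\grad\sqrt{u}\cdot\sqrt{u}\,\grad(K*u)\diff x.
$$
By Young's inequality the right side is $\leq 2\eta\|\grad\sqrt{u}\|_{L^2}^2 + \tfrac{1}{2\eta}\|\sqrt u\,\grad(K*u)\|_{L^2}^2$, and then $\|\sqrt u\,\grad(K*u)\|_{L^2}^2 = \int u\,|\grad(K*u)|^2$; here one writes $\grad(K*u) = \grad K * u$ only formally — instead, bound $\int u|\grad(K*u)|^2 \leq \|\grad(K*u)\|_{L^\infty}\int u\,|\grad(K*u)|$... this still needs $\grad(K*u)$ in $L^\infty$. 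The intended trick, matching the sketch in the introduction, is different: estimate $-\int\grad u\cdot\grad(K*u) = \int u\,\Delta(K*u)$ — no, $K$ isn't differentiable. The honest computation that works with only \textbf{\ref{h1}}: keep $-\int \grad u \cdot \grad(K*u)$ and integrate by parts to get $\int u\, \DIV(\grad K * u)$ — again problematic. I think the actual mechanism is the one displayed verbatim in the introduction: one shows directly
$$
\frac{\diff}{\diff t}H[u] + D\int_{\Rd}\frac{|\grad u|^2}{u}\diff x \leq m\|K\|_{L^\infty}\int_{\Rd}\frac{|\grad u|^2}{u}\diff x,
$$
by estimating $|\int \frac{\grad u}{u}\cdot u\,\grad(K*u)| = |\int \grad u\cdot\grad(K*u)| = |\int \Delta u\,(K*u)| \leq \|K*u\|_{L^\infty}\|\Delta u\|_{L^1}$, and then $\|\Delta u\|_{L^1}$... no. The cleanest: $|\int\grad u\cdot\grad(K*u)| \leq \|\grad u/\sqrt u\|_{L^2}\,\|\sqrt u\,\grad(K*u)\|_{L^2}$ and $\|\sqrt u\,\grad(K*u)\|_{L^2}^2 = \int u|\grad K*u|^2 \leq \|\grad K*u\|_{L^2}^2\|u\|_{L^\infty}$ — circular. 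Given the introduction explicitly asserts the displayed inequality with constant $m\|K\|_{L^\infty}$, I will take it that the estimate proceeds by $\int \grad u\cdot\grad(K*u) = -\int u\,\DIV\grad(K*u)$ interpreted as $-\int u\,(K*\Delta u)$ after moving one derivative across the convolution, $= -\int (\grad K*u)\cdot\grad u$... The key identity that gives exactly $m\|K\|_{L^\infty}$ must be $\int u\,|\grad(K*u)|^2 \le \|u\|_{L^1}\|K\|_{L^\infty}\int\frac{|\grad u|^2}{u}$ via some rearrangement — I would reconstruct this carefully, as \textbf{this is the main obstacle}: producing the dissipation bound with precisely the constant $m\|K\|_{L^\infty}$ so that \eqref{const:c1} makes $c_{1,1} = D - m\|K\|_{L^\infty} > 0$ available for absorption. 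Granting the displayed inequality, one rewrites it as $\frac{\diff}{\diff t}H[u] + 4c_{1,1}\|\grad\sqrt u\|_{L^2}^2 \leq 0$, integrates on $(0,T)$, and uses Step 3 to bound $H[u(T)]$ from below, yielding \eqref{est:scalar1}; \eqref{est:scalar2} then follows since $\|\sqrt u\,\grad(K*u)\|_{L^2}^2 \le m\|K\|_{L^\infty}\cdot 4\|\grad\sqrt u\|_{L^2}^2$.

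\textbf{Step 3: Lower bound on entropy.} I would use the standard fact that for $u\geq 0$ with fixed mass $m$ and finite second moment, $H[u] \geq -C(d)\,(m + m\log(\text{something}) + I[u])$; more precisely, comparing $u$ against the Gaussian of the same mass and second moment, $\int u\log u \geq -\frac12\int u|x|^2 - C(d,m)$, i.e. $H[u(t)] \geq -C(1 + I[u(t)])$. Combined with Step 1's bound $\sup_{(0,T)}I[u(t)]\leq C$, this gives $\inf_{(0,T)}H[u(t)] \geq -C$, closing the argument in Step 2. \textbf{Step 4: Estimate \eqref{est:scalar1.123_lembefore}.} From $D\grad u = 2D\sqrt u\,\grad\sqrt u$, Cauchy–Schwarz gives $\|\grad u\|_{L^1} \leq 2\sqrt m\,\|\grad\sqrt u\|_{L^2}$, so $\|\grad u\|_{L^2(0,T;L^1)} \leq 2\sqrt m\,\|\grad\sqrt u\|_{L^2(Q_T)} \leq C$ by \eqref{est:scalar1}. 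All constants depend only on the listed quantities, completing the proof. The genuinely delicate point throughout is Step 2 — getting the entropy dissipation inequality with the sharp constant $m\|K\|_{L^\infty}$ rather than something involving uncontrolled norms of $\grad K$ — and I would expect to spend most of the effort there, likely via the rearrangement $\int u|\grad(K*u)|^2\diff x \le \|u\|_{L^1}\|K\|_{L^\infty}^2\,(\text{a Fisher-information-type quantity})$ or by a direct duality/convexity argument on $\int\grad u\cdot(K*\grad u)$.
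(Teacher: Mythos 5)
Your skeleton is the right one (entropy dissipation, absorption of the interaction term via the small-mass condition, second moment to control the negative part of $u\log u$, and $\nabla u = 2\sqrt{u}\,\nabla\sqrt{u}$ for the last estimate), but the proposal has two genuine gaps, and the first is precisely the point you flag as ``the main obstacle'' and then leave unresolved. The estimate you are groping for is elementary and does not require any rearrangement, duality, or Fisher-information identity: since $u$ is smooth, $\nabla(K*u) = K*\nabla u$, so by Young's convolution inequality $\|K*\nabla u\|_{L^\infty} \leq \|K\|_{L^\infty}\|\nabla u\|_{L^1}$, hence
\begin{equation*}
\Bigl|\int_{\Rd}\nabla u\cdot\nabla(K*u)\diff x\Bigr| \leq \|K*\nabla u\|_{L^\infty(\Rd)}\,\|\nabla u\|_{L^1(\Rd)} \leq \|K\|_{L^\infty(\Rd)}\,\|\nabla u\|_{L^1(\Rd)}^2 \leq m\,\|K\|_{L^\infty(\Rd)}\int_{\Rd}\frac{|\nabla u|^2}{u}\diff x,
\end{equation*}
the last step being Cauchy--Schwarz, $\|\nabla u\|_{L^1}^2 = \bigl(\int\sqrt{u}\,\tfrac{|\nabla u|}{\sqrt{u}}\bigr)^2 \leq m\int\tfrac{|\nabla u|^2}{u}$. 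This is the entire mechanism behind the constant $m\|K\|_{L^\infty}$ and hence behind $c_{1,1}>0$; several of the alternative routes you sketch (moving a derivative onto $K$, bounding $\|\Delta u\|_{L^1}$, invoking $\|u\|_{L^\infty}$) either violate \textbf{(H1)} or are circular, as you yourself note. The same two inequalities give $\int u\,|K*\nabla u|^2 \leq m\|K\|_{L^\infty}^2\|\nabla u\|_{L^1}^2 \leq 4m^2\|K\|_{L^\infty}^2\|\nabla\sqrt{u}\|_{L^2}^2$, which is what you need for \eqref{est:scalar2}.

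The second gap is the ordering. Your Step 1 claims $\tfrac{\diff}{\diff t}I[u(t)] \leq C(1+I)$ with $C$ depending only on $m$ and $\|K\|_{L^\infty}$, closed by Gr\"onwall \emph{before} the entropy estimate. This is false at fixed time: the term $-2\int x\cdot u\,\nabla(K*u)\diff x$ is controlled by $\varepsilon^{-1}I[u(t)] + \varepsilon\,\|\sqrt{u}\,\nabla(K*u)\|_{L^2(\Rd)}^2$, and the second factor is comparable to $\|\nabla\sqrt{u}(t,\cdot)\|_{L^2(\Rd)}^2$, which is not pointwise bounded in time --- it is only controlled after time integration through the entropy dissipation. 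Your attempted fix (``the term $x\cdot\nabla u$ is handled by parts back again'') just returns the original expression and is circular. The resolution in the paper is to run the two estimates simultaneously: integrate both the entropy inequality and the second-moment inequality in time, add them, choose $\varepsilon$ small so that the dissipation terms coming from $\tfrac{\diff}{\diff t}I$ are absorbed by the coercive term $4c_{1,1}\|\nabla\sqrt{u}\|_{L^2(Q_t)}^2$, and only then apply Gr\"onwall to the combined quantity $\int u|\log u| + 2c_{1,1}\|\nabla\sqrt{u}\|_{L^2(Q_t)}^2 + I[u(t)]$. Your Steps 3 and 4 are correct and match the paper.
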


\begin{lemma}[Apriori estimates under \textup{\textbf{\ref{h1}}}-\textup{\textbf{\ref{h3}}}]\label{lem:iscalararbmass}
    Fix $T>0$. Assume \textup{\textbf{\ref{h1}}}-\textup{\textbf{\ref{h3}}}. Then for any smooth, positive solution $u$ to problem \eqref{eq:mainscalar} there holds
    \begin{align}
     \sup_{t\in(0,T)} \int_{\mathbb{R}^d} u \magg{x} \diff x&\leq C ; \label{est:scalar1.2} \\
        \sup_{t \in (0,T)} \int_{\mathbb{R}^d} u \as{\log u} \diff x+ \tfrac{1}{2} \norm{f}_{L^2(Q_T)}^2 &\leq C; \label{est:scalar2.2} \\
         \norm{\grad u}_{L^2(0,T; L^1(\mathbb{R}^d))}   &\leq C;         \label{est:scalar1.123}           \\
                   \norm{\sqrt{u}\, \grad K * u}_{L^2(Q_T)} &\leq C; \label{est:scalar1.4} \\
        \norm{\grad \sqrt{u}}_{L^2(Q_T)} &\leq C ; \label{est:scalar1.3}
    \end{align}
    where $C = C(D,T,\norm{K}_{L^\infty(\mathbb{R}^d)},d,m,I[u_0],H[u_0])$ and $f:= \sqrt{u}\, \grad (D \log u + K  * u)$ is the dissipation.
\end{lemma}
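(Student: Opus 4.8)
The plan is to use that \eqref{eq:mainscalar} is the $2$-Wasserstein gradient flow of the free energy $\mathcal F$ in \eqref{q:FreeEnergy}: formally testing the equation with $D\log u + 1 + K\ast u$ (the first variation of $\mathcal F$, which equals that expression precisely because of the symmetry \textup{\textbf{\ref{h3}}}) gives the dissipation identity $\frac{\diff}{\diff t}\mathcal F[u(t)] = -\|f(t)\|_{L^2(\Rd)}^2\le 0$ with $f=\sqrt u\,\grad(D\log u+K\ast u)$. In particular $\mathcal F[u(t)]\le \mathcal F[u_0]<\infty$, and since $|\mathcal K[u]|\le\tfrac12\|K\|_{L^\infty(\Rd)}m^2$ by \textup{\textbf{\ref{h1}}}, this already yields a \emph{uniform upper bound} $H[u(t)]\le\bar H:=D^{-1}(\mathcal F[u_0]+\tfrac12\|K\|_{L^\infty(\Rd)}m^2)$, while conversely $\int_0^T\|f\|_{L^2(\Rd)}^2\diff t\le \mathcal F[u_0]+\tfrac12\|K\|_{L^\infty(\Rd)}m^2-D\,H[u(T)]$. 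Throughout I treat $u$ as smooth, positive and sufficiently decaying (as the regularized solutions in the existence proof are), so that differentiating $H[u]$, $I[u]$, $\mathcal F[u]$, integrating by parts at spatial infinity, and using $\grad(K\ast u)=K\ast\grad u$ are all legitimate.

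Next I differentiate the second moment \eqref{eq:second_moment}: the diffusion part contributes $2Ddm$ after an integration by parts, and the interaction part is controlled using $\grad(K\ast u)=K\ast\grad u$ and $K\in L^\infty(\Rd)$,
\[
\Big|\int_{\Rd} u\, x\cdot(\grad K\ast u)\,\diff x\Big|\le \|K\|_{L^\infty(\Rd)}\,\|\grad u\|_{L^1(\Rd)}\int_{\Rd}u\,|x|\,\diff x\le 2\|K\|_{L^\infty(\Rd)}\,m\,\sqrt{I[u]}\;\|\grad\sqrt u\|_{L^2(\Rd)},
\]
using $\|\grad u\|_{L^1(\Rd)}\le 2\sqrt m\,\|\grad\sqrt u\|_{L^2(\Rd)}$ and $\int u|x|\le\sqrt{m\,I[u]}$. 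Young's inequality then gives $\frac{\diff}{\diff t}I[u]\le C(1+I[u])+C\|\grad\sqrt u\|_{L^2(\Rd)}^2$, hence $\sup_{[0,t]}I[u]\le C(1+\|\grad\sqrt u\|_{L^2(Q_t)}^2)$. To close, I feed this into the dissipation: inserting the sharp lower bound $H[u]\ge -C(m,d)-\tfrac{md}{2}\log\big(e+I[u]\big)$ (obtained by comparing $u$ with a Gaussian of optimally chosen scale) into the above gives $\int_0^T\|f\|_{L^2(\Rd)}^2\diff t\le C+C\log(e+I[u(T)])$. On the other hand $2D\,\grad\sqrt u=f-\sqrt u\,(\grad K\ast u)$, and $\|\sqrt u\,(\grad K\ast u)\|_{L^2(\Rd)}^2=\int u|\grad K\ast u|^2\diff x$ is split over $\{u\le\lambda\}$ and $\{u>\lambda\}$: the first is controlled via \textup{\textbf{\ref{h2}}} through $\|\grad K\ast u\|_{L^1(\Rd)}\le\|\grad K\|_{TV}m$, the second by the equi-integrability estimate $\int_{\{u>\lambda\}}u\diff x\le(\log\lambda)^{-1}\int u(\log u)_+\diff x$ which the entropy bound of the first paragraph makes small for $\lambda$ large. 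This produces $\|\grad\sqrt u\|_{L^2(\Rd)}^2\le CD^{-2}\|f\|_{L^2(\Rd)}^2+(\text{lower order})$, and feeding the three relations into one another closes a Gronwall/continuity argument, yielding $\sup_t I[u(t)]\le C$, whence $\int_0^T\|f\|_{L^2(\Rd)}^2\le C$ and $\|\grad\sqrt u\|_{L^2(Q_T)}\le C$; this is \eqref{est:scalar1.2}, \eqref{est:scalar1.3}, and the $\|f\|_{L^2(Q_T)}^2$ part of \eqref{est:scalar2.2}.

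The remaining estimates are then consequences. The bound $\int u|\log u|\le C$ in \eqref{est:scalar2.2} follows from $H[u]\le\bar H$, the standard tail estimate $\int_{\{u<1\}}u\log(1/u)\le\tfrac12 I[u]+C_d$, and $\sup_tI[u]\le C$. For \eqref{est:scalar1.123} write $D\grad u=f\sqrt u-u\,(\grad K\ast u)$: the first term is bounded in $L^1(\Rd)$ by $\sqrt m\,\|f\|_{L^2(\Rd)}\in L^2(0,T)$, and $\|u\,(\grad K\ast u)\|_{L^1(\Rd)}$ by the same small/large-$u$ splitting as above (now with a fixed threshold, since $I[u]$ is already controlled). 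Finally \eqref{est:scalar1.4} follows from $\sqrt u\,(\grad K\ast u)=f-2D\grad\sqrt u$ together with the $L^2(Q_T)$ bounds on $f$ and $\grad\sqrt u$, and \eqref{est:scalar1.2} is exactly the bound obtained in the previous step.

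The main obstacle is the closure in the second paragraph, i.e. controlling the second moment, the free-energy dissipation and $\grad\sqrt u$ \emph{simultaneously} with no smallness of the mass. Unlike the small-mass regime, where $D-m\|K\|_{L^\infty(\Rd)}>0$ lets one absorb the drift directly into the diffusion, here one must exploit (i) the monotonicity of $\mathcal F$ in place of a pointwise-in-time coercivity, (ii) the fact that the entropy depends only \emph{logarithmically} on the second moment (a linear bound would lead to finite-time blow-up of the Gronwall estimate), and (iii) the decomposition of the drift $u\,\grad K\ast u$ into a small-$u$ piece handled by the total variation $\|\grad K\|_{TV}$ from \textup{\textbf{\ref{h2}}} and a large-$u$ piece handled by the equi-integrability coming from the entropy, with the truncation level chosen carefully in terms of the current size of the second moment. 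A secondary (routine) point is justifying all the formal computations for the class of solutions considered, which is done by performing them on the smooth regularized solutions used later in the existence proof and passing to the limit.
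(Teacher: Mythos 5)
Your overall architecture (free energy dissipation, second moment, small/large-$u$ splitting using the total variation of $K$ and the entropy's equi-integrability) is the right one, and the later steps (deducing \eqref{est:scalar1.123}, \eqref{est:scalar1.4}, \eqref{est:scalar1.3} once $I[u]$ and $\|f\|_{L^2(Q_T)}$ are controlled) agree with the paper's. However, there is a genuine gap in the closure argument of your second paragraph, and it stems from the way you differentiate the second moment.

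You compute $\tfrac{\diff}{\diff t} I[u]$ by inserting the original PDE and estimating the diffusion and drift contributions \emph{separately}, landing on a bound of the form $\tfrac{\diff}{\diff t} I[u]\le C(1+I[u])+C\|\nabla\sqrt u\|^2_{L^2}$. This is weaker than what the gradient-flow structure gives: since $u_t = \nabla\cdot(\sqrt u\,f)$, one has directly $\tfrac{\diff}{\diff t}I[u] = -2\int \sqrt u\,f\cdot x\,\diff x \le \varepsilon^{-1} I[u]+\varepsilon\|f\|^2_{L^2}$, and $\|f\|^2_{L^2(Q_t)}$ \emph{is} controlled by the free energy decay plus the (linear) lower bound $-\int_{\{u\le1\}}u\log u \le C_1+I[u]$. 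That single observation closes the Gronwall argument for $I[u]$ with no reference to $\nabla\sqrt u$ at all, and the sharp logarithmic Carleman--Csisz\'ar bound you invoke is unnecessary. Your route instead requires a bound on $\|\nabla\sqrt u\|^2$ \emph{before} $I[u]$ is known to be finite, and that is where the circle refuses to close: to bound $\|\nabla\sqrt u\|^2$ from $2D\nabla\sqrt u = f - \sqrt u\,\nabla K\ast u$, you must control $\int u\,|\nabla K\ast u|^2\,\diff x$, and in \emph{both} halves of your truncation this is estimated via $\|\nabla K\ast u\|_{L^\infty}\le\|K\|_\infty\|\nabla u\|_{L^1}\lesssim\|\nabla\sqrt u\|_{L^2}$, so both pieces reintroduce $\|\nabla\sqrt u\|$. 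Absorbing the large-$u$ piece forces $\log\lambda\gtrsim D^{-2}\int u(\log u)_+ \approx D^{-2}(1+I[u])$, and the small-$u$ piece then costs $\lambda\approx e^{cI[u]}$ after Young; feeding this back into your second-moment bound yields $I[u(t)]\lesssim \int_0^t e^{cI[u(s)]}\,\diff s$, which blows up in finite time for generic data, so the continuity/Gronwall argument produces at best a short-time bound rather than the stated global one.

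By contrast, the paper (i) differentiates $I[u]$ through the dissipation $f$ so that $I$ and $\|f\|_{L^2(Q_t)}$ are bounded with no reference to $\nabla\sqrt u$, and then (ii) estimates the $L^2_t L^1_x$ norm of $\nabla u$ (not the $L^2$ norm of $\sqrt u\,\nabla K\ast u$), splitting $u\,\nabla K\ast u$ into a small-$u$ part bounded by $C_K\,\ell\, m$ using \textbf{(H2)} directly, with \emph{no} $\nabla u$ appearing, and a large-$u$ part of order $(\log\ell)^{-1}\|\nabla u\|_{L^1}$; choosing a \emph{fixed} large $\ell$ closes the self-bounding inequality for $\|\nabla u\|_{L^2_t L^1_x}$. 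The estimates \eqref{est:scalar1.4} and \eqref{est:scalar1.3} then follow as you indicate. To repair your argument you should replace your differentiation of $I[u]$ by the dissipation-based one, and estimate $\|u\,\nabla K\ast u\|_{L^2_t L^1_x}$ with the fixed truncation level rather than $\|\sqrt u\,\nabla K\ast u\|^2_{L^2(Q_T)}$ with an $I$-dependent one.
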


We begin with the proof of Lemma \ref{lem:iscalar}.
\begin{proof}[Proof of Lemma \ref{lem:iscalar}]
 Consider the entropy functional $H[u(t)] = \int_{\mathbb{R}^d} u \log u \, \dx$. Taking the derivative with respect to time, using the conservation of mass, and integrating by parts we have
\begin{align}\label{est:dwdtscalar}
    \frac{{\rm d}}{{\rm d}t}H[u(t)]  &= - \int_{\mathbb{R}^d} \frac{\grad u}{u} \left( D\, \grad u + u \grad ( K * u ) \right) \diff x\nonumber \\
    &= - D \int_{\mathbb{R}^d} \frac{\magg{\grad u}}{u } \diff x-  \int_{\mathbb{R}^d} \grad u \, \grad (K * u ) \diff x.
\end{align}
We first control the second term of \eqref{est:dwdtscalar} using H{\"o}lder's inequality and Young's convolution inequality:
\begin{align}\label{est:graduKscalar}
    \as{\int_{\mathbb{R}^d} \grad u \grad (K * u) \dx}  &\leq \norm{K * \grad u (t,\cdot)}_{L^\infty ({\mathbb{R}^d})} \norm{\grad u (t,\cdot)}_{L^1 ({\mathbb{R}^d})} \nonumber \\
    &\leq \norm{K}_{L^\infty ({\mathbb{R}^d})} \norm{\grad u (t,\cdot)}_{L^1 ({\mathbb{R}^d})} ^2 .
\end{align}
Furthermore, we can estimate $\grad u$ via H{\" o}lder's inequality and the conservation of mass
\begin{align}\label{est:graduscalar}
    \norm{\grad u (t,\cdot)}_{L^1 ({\mathbb{R}^d})}^2 &= \left( \int_{\mathbb{R}^d} \sqrt{u}\, \frac{\as{\grad u}}{\sqrt{u}}  \diff x\right)^2 \leq \norm{u(t,\cdot)}_{L^1 ({\mathbb{R}^d})} \int_{\mathbb{R}^d} \frac{\magg{\grad u}}{u} \diff x = m  \int_{\mathbb{R}^d} \frac{\magg{\grad u}}{u} \diff x.
\end{align}
Combining estimates \eqref{est:graduKscalar}-\eqref{est:graduscalar}, \eqref{est:dwdtscalar} becomes
\begin{align}
    \frac{{\rm d}}{{\rm d}t} H[u(t)] + 4 c_{1,1} \norm{\grad \sqrt{u} (t,\cdot)}_{L^2 ({\mathbb{R}^d})}^2 \leq 0.
\end{align}
Integrating both sides from $0$ to $t$ yields 
\begin{align}\label{est:scalarprime1.1}
    H[u(t)] + 4 c_{1,1}\norm{\grad \sqrt{u}}_{L^2 (Q_t)}^2 \leq H[u_0] .
\end{align}
We also notice that by H{\"o}lder's inequality, Young's convolution inequality and \eqref{est:graduscalar} that
\begin{align}\label{est:iscalar9.11}
   \norm{\sqrt{u}\, \grad (K * u) (t,\cdot)}_{L^2 ({\mathbb{R}^d})}^2 = \int_{\mathbb{R}^d} u \magg{K * \grad u} \diff x&\leq m \norm{K * \grad u}_{L^\infty ({\mathbb{R}^d})}^2 \nonumber \\
    &\leq m \norm{K}_{L^\infty ({\mathbb{R}^d})}^2 \norm{\grad u (t,\cdot)}_{L^1 ({\mathbb{R}^d})}^2 \nonumber \\
    &\leq 4 m^2 \norm{K}_{L^\infty ({\mathbb{R}^d})}^2 \norm{\grad \sqrt{u} (t,\cdot)}_{L^2 ({\mathbb{R}^d})}^2
\end{align}
We seek to combine these estimates to obtain an estimate on $\norm{\grad \sqrt{u}}_{L^2 (Q_T)}^2$, but we must first control the negative part of $u \log u$. This can be achieved using the second moment $I[u(t)]$ by considering the set $ \underline{\Omega} := \{ (t,x) : u \leq 1\}$ and splitting it for two subsets $\{ u \leq e^{-\as{x}^2/2}\}$, $\{e^{-\as{x}^2/2} \leq u \leq 1 \}$:
\begin{align}\label{eq:arbitrarymass2.12prime} 
    - \int_{\mathbb{R}^d} \chi_{\underline{\Omega} }u \log u \diff x&\leq \sup_{\xi \in (0,1)} \as{\sqrt{\xi} \log \xi}\int_{\mathbb{R}^d} e^{-\as{x}^2/2} \diff x- \int_{\{e^{-\as{x}^2/2} \leq u \leq 1 \}} u \log u \diff x\nonumber \\
    &\leq C_1 + \int_{\mathbb{R}^d} u \as{x}^2  \diff x= C_1 + I[u(t)] ,
\end{align}
where $C_1>0$ depends only on the dimension $d$. Hence, we compute the time derivative of $I[u(t)]$, integrate by parts, apply Young's inequality with $\varepsilon$ and use estimate \eqref{est:iscalar9.11} to find
\begin{align}\label{est:scalarprime1.3}
    \frac{{\rm d}}{{\rm d}t} I[u(t)] &= - 2\int_{\mathbb{R}^d} x \cdot ( D \grad u + u\, K* \grad u ) \diff x\nonumber \\
    &\leq \varepsilon^{-1} I[u(t)] + \varepsilon ( 8D^2 \norm{\grad \sqrt{u} (t,\cdot)}_{L^2(\mathbb{R}^d)}^2 + 2\norm{\sqrt{u} \, \grad K*u}_{L^2(\mathbb{R}^d)}^2 )  \nonumber \\
    &\leq \varepsilon^{-1} I[u(t)] + 8\, \varepsilon ( D^2 + m^2 \norm{K}_{L^\infty (\mathbb{R}^d)}^2 ) \norm{\grad \sqrt{u} (t,\cdot)}_{L^2(\mathbb{R}^d)}^2 ,
\end{align}
where $\varepsilon>0$ is to be chosen. Integrating this result from $0$ to $t$ yields
\begin{align}\label{est:scalarprime1.5}
    I[u(t)] \leq \varepsilon^{-1} \int_0 ^t I[u(s)] {\rm d} s + 8\, \varepsilon (D^2 + m^2 \norm{K}_{L^\infty(\mathbb{R}^d)}^2 ) \norm{\grad \sqrt{u}}_{L^2(Q_T)}^2 +  I[u_0].
\end{align}
Writing
\begin{equation}\label{eq:splitting_ulogu}
\int_{\mathbb{R}^d} u \as{\log u} \diff x= \int_{\mathbb{R}^d} u \log u \diff x- 2 \int_{\mathbb{R}^d} \chi_{\underline\Omega} u \log u \diff x,
\end{equation} 
we may combine estimates \eqref{est:scalarprime1.1}, \eqref{eq:arbitrarymass2.12prime}, \eqref{est:scalarprime1.5} and \eqref{eq:splitting_ulogu} to obtain
\begin{align}\label{est:scalarprime1.7}
    \int_{\mathbb{R}^d} u \as{\log u} \diff x&\,+ 2 c_{1,1} \norm{\grad \sqrt{u}}_{L^2(Q_t)}^2 +I[u(t)] \nonumber \\
    = &\,H[u(t)] - 2 \int_{\mathbb{R}^d} \chi_{\underline\Omega} u \log u \diff x+ 2 c_{1,1} \norm{\grad \sqrt{u}}_{L^2(Q_t)}^2 +I[u(t)] \nonumber \\
    \leq &\, H[u_0] -  2 c_{1,1} \norm{\grad \sqrt{u}}_{L^2(Q_t)}^2 + 2 C_1 + 3 I[u(t)] \nonumber \\
    \leq &\,  H[u_0] + 2 C_1 + 3 \varepsilon^{-1} \int_0 ^t I[u(s)]{\rm d}s  \nonumber \\
    &\, + (24 \varepsilon ( D^2 + 4 m^2 \norm{K}_{L^\infty (\mathbb{R}^d)}^2 )  -  2 c_{1,1} )\norm{\grad \sqrt{u}}_{L^2(Q_t)}^2 + 3 I[u_0].
\end{align}
Choosing $\varepsilon$ sufficiently small, the last of \eqref{est:scalarprime1.7} is nonpositive, and so we conclude that
\begin{align}\label{est:scalarprime1.9}
    \int_{\mathbb{R}^d} u \as{\log u} \diff x+ 2 c_{1,1} \norm{\grad \sqrt{u}}_{L^2(Q_t)}^2 +I[u(t)] &\leq H[u_0] + 2C_1 + 3 \varepsilon^{-1} \int_0 ^t I[u(s)] {\rm d}s + 3 I[u_0],
\end{align}
whence Gr{\"o}nwall's lemma yields estimate \eqref{est:scalar1.2prime}. With the bound on $I[u(t)]$, we immediately obtain estimate \eqref{est:scalar1}, from which estimate \eqref{est:scalar2} follows from estimate \eqref{est:iscalar9.11}. Finally, \eqref{est:scalar1.123_lembefore} follows by writing $\nabla u = 2\, (\nabla \sqrt{u})\, \sqrt{u}$ and using the conservation of mass.
\end{proof}

Next we prove Lemma \ref{lem:iscalararbmass}.
\begin{proof}[Proof of Lemma \ref{lem:iscalararbmass}]
    We begin with estimates on $\norm{u(t,\cdot)}_{L^1(\mathbb{R}^d)}$, the second moment $I[u(t)]$, and $\int_{\mathbb{R}^d}u \as{\log u} \dx$ in $L^\infty (0,T)$. From the conservation of mass, we already have that $\norm{u}_{L^\infty (0,T; L^1({\mathbb{R}^d}))} = m$. We then write \eqref{eq:mainscalar} as a gradient flow:
$$
u_t - \grad \cdot ( u \grad( D \log u + K   * u )  ) = u_t -  \grad \cdot (\sqrt{u} f ) = 0,
$$
where $f$ is the dissipation defined in the statement of the Lemma. By direct computation and hypothesis \textup{\textbf{\ref{h3}}} there holds
\begin{align}
    \frac{{\rm d}}{{\rm d}t} \mathcal{F}[u(t)] + \int_{\mathbb{R}^d} \as{f}^2 \diff x= 0.
\end{align}
Integrating both sides from $0 \to t$, applying Young's convolution inequality and using the conservation of mass yields
\begin{align}\label{eq:arbitrarymass2.11}
   D H[u(t)] + \norm{f}_{L^2(Q_t)}^2 &\leq D H[u_0] + m^2 \norm{K}_{L^\infty ({\mathbb{R}^d})}. 
\end{align}
As in \eqref{eq:arbitrarymass2.12prime}, we may control the negative part of $u \log u$ in terms of the second moment $I[u(t)]$ which can be estimated by using the dissipation $f$ as follows
\begin{align*}
     \frac{{\rm d}}{{\rm d}t} I[u(t)] &= - 2 \int_{\mathbb{R}^d} \sqrt{u} f  \cdot x \diff x\leq  \varepsilon^{-1} \int_{\mathbb{R}^d} u \as{x}^2 \diff x+ \varepsilon\int_{\mathbb{R}^d} \as{f}^2 \diff x.
\end{align*}
After integration from $0$ to $t$ we obtain
\begin{align}\label{eq:arbitrarymass2.13}
    I[u(t)] &\leq \varepsilon^{-1}\int_0^t I[u(s)] {\rm d}s + \varepsilon \norm{f}_{L^2(Q_t)}^2 + I[u_0] ,
\end{align}
where we fix $\varepsilon = \tfrac{1}{2(2D+1)}$. Using \eqref{eq:arbitrarymass2.12prime} and \eqref{eq:splitting_ulogu} as in the proof of Lemma \ref{lem:iscalar}, estimates \eqref{eq:arbitrarymass2.11}-\eqref{eq:arbitrarymass2.13} paired with Gr{\"o}nwall's lemma gives estimate \eqref{est:scalar1.2}, from which estimate \eqref{est:scalar2.2} follows.\\

Now we obtain an estimate on $\grad u$ in $L^2(0,T; L^1({\mathbb{R}^d}))$. To this end, we write $D\grad u = \sqrt{u} f - u \grad K * u$ so that we may deduce from H{\"o}lder's inequality and the conservation of mass that
\begin{align}\label{eq:arbitrarymass2.1}
   D  \norm{\grad u}_{L^2 (0,T; L^1 ({\mathbb{R}^d}))} &\leq \norm{ \sqrt{u} f}_{L^2 (0,T; L^1 ({\mathbb{R}^d}))} + \norm{u \grad K  * u}_{L^2 (0,T; L^1 ({\mathbb{R}^d}))} \nonumber \\
    &\leq \sqrt{m} \norm{f}_{L^2(Q_T)} + \norm{u \grad K  * u}_{L^2 (0,T; L^1 ({\mathbb{R}^d}))}.
\end{align}
We consider separately two sets: $\underline{{\Omega}} :=\{ (t,x) : u \leq \ell\}$ and $\overline{{\Omega}}:= \{ (t,x) : u > \ell\}$ for some $\ell > 1$ to be chosen sufficiently large. We then estimate as follows:
\begin{align}\label{eq:arbitrarymass2.2}
    &\norm{u \grad K * u}_{L^2 (0,T; L^1 ({\mathbb{R}^d}))} = \norm{\chi_{\underline{\Omega}} u \grad K * u}_{L^2 (0,T; L^1 ({\mathbb{R}^d}))} + \norm{ \chi_{\overline{\Omega}} u \grad K * u}_{L^2 (0,T; L^1 ({\mathbb{R}^d}))} \nonumber \\
    &\leq \ell \norm{\grad K * u}_{L^2 (0,T; L^1 ({\mathbb{R}^d}))} + \norm{\frac{u \as{\log u}}{\log \ell} \grad K * u}_{L^2 (0,T; L^1 ({\mathbb{R}^d}))} \nonumber \\
    &\leq C_K \ell \norm{u}_{L^2 (0,T; L^1 ({\mathbb{R}^d}))} + \frac{1}{\log \ell } \norm{u \as{\log u}}_{L^\infty (0,T; L^1 ({\mathbb{R}^d}))} \norm{\grad K *  u}_{L^2 (0,T; L^\infty ({\mathbb{R}^d}))} \nonumber \\
    &\leq C_K \ell \norm{u}_{L^2 (0,T; L^1 ({\mathbb{R}^d}))} + \frac{1}{\log \ell } \norm{u \as{\log u}}_{L^\infty (0,T; L^1 ({\mathbb{R}^d}))} \norm{K}_{L^\infty({\mathbb{R}^d})} \norm{ \grad u}_{L^2 (0,T; L^1 ({\mathbb{R}^d}))} \nonumber \\
    &\leq C_K \ell m \sqrt{T} + \frac{C_2}{\log \ell} \norm{\grad u}_{L^2 (0,T; L^1 ({\mathbb{R}^d}))} .
\end{align}
The first inequality uses the properties of the sets $\underline{\Omega}$, $\overline{\Omega}$; the second inequality uses hypothesis \textup{\textbf{\ref{h2}}} and H{\"o}lder's inequality; the third inequality uses Young's convolution inequality; the fourth inequality uses hypothesis \textup{\textbf{\ref{h1}}} and the previously obtained bounds on $u$ and $u \as{\log u}$ in $L^\infty (0,T; L^1({\mathbb{R}^d}))$. Combining \eqref{eq:arbitrarymass2.1}-\eqref{eq:arbitrarymass2.2} we have shown that
\begin{align}\label{eq:arbitrarymass2.223}
   D  \norm{\grad u}_{L^2 (0,T; L^1 ({\mathbb{R}^d}))} \leq \sqrt{m} \norm{f}_{L^2(Q_T)} + C_K \ell m \sqrt{T} + \frac{C_2}{\log \ell} \norm{\grad u}_{L^2 (0,T; L^1 ({\mathbb{R}^d}))},
\end{align}
whence choosing $\ell$ sufficiently large yields a uniform estimate on $\grad u$ in $L^2(0,T; L^1(\mathbb{R}^d))$. Consequently, H{\"o}lder's inequality, Young's convolution inequality and hypothesis \textup{\textbf{\ref{h1}}} yields
\begin{align}\label{eq:arbitrarymass2.3}
    \norm{\sqrt{u} \grad K * u}_{L^2(Q_T)} &\leq \norm{\sqrt{u}}_{L^\infty (0,T; L^2 ({\mathbb{R}^d}))}^{1/2} \norm{K}_{L^\infty({\mathbb{R}^d})} \norm{ \grad u}_{L^2 (0,T; L^1 ({\mathbb{R}^d}))} \nonumber \\
    &= \sqrt{m} \norm{K}_{L^\infty({\mathbb{R}^d})} \norm{ \grad u}_{L^2 (0,T; L^1 ({\mathbb{R}^d}))} 
\end{align}
and so \eqref{eq:arbitrarymass2.3} paired with the uniform estimate obtained from \eqref{eq:arbitrarymass2.223} yields estimate \eqref{est:scalar1.4}.

Finally, as $2 D \grad \sqrt{u} = f - \sqrt{u} \grad K * u$, estimate \eqref{eq:arbitrarymass2.3} and the bound on the dissipation $f$ in $L^2(Q_T)$ shows that $\grad \sqrt{u}$ is bounded in $L^2(Q_T)$ and estimate \eqref{est:scalar1.3} follows.
\end{proof}

\subsection{Improved estimates}

In this section we improve the estimates obtained in Lemmas \ref{lem:iscalar}-\ref{lem:iscalararbmass}. As the the following result uses only the estimates on $\grad \sqrt{u}$ and $\sqrt{u} \grad K * u$, the same argument applies assuming the conditions of either Lemma \ref{lem:iscalar} or Lemma \ref{lem:iscalararbmass} are met.

First, we obtain estimates to $u \in L^\infty (0,T; L^p ({\mathbb{R}^d}))$ for any $2 \leq p < \infty$ and on $\grad u$ in $L^2 (Q_T)$ with no additional assumptions required.

\begin{lemma}[Improved estimates with no further assumptions]\label{lem:iscalar2}
    Assume the conditions of Lemma \ref{lem:iscalar} (Lemma \ref{lem:iscalararbmass}) are satisfied. 
    Then, for any $p \in (1,\infty)$, if $u_0 \in L^p(\Rd)$ then
        \begin{equation}\label{est:scalar3}
        \sup_{t \in (0,T)}\norm{u(t,\cdot)}_{ L^p({\mathbb{R}^d})} \leq e^{C \norm{K}_{L^\infty (\mathbb{R}^d)}^2 p / D} \norm{u_0}_{L^p ({\mathbb{R}^d})}, 
        \end{equation}
        where $C$ is given in Lemma \ref{lem:iscalar} (Lemma  \ref{lem:iscalararbmass}). Moreover, if $u_0 \in L^2(\Rd)$ then
        \begin{equation}\label{est:scalar4}
        \norm{\grad u}_{L^2 (Q_T)} \leq   D^{-1}\, \tilde C \norm{u_0}_{L^2 ({\mathbb{R}^d})}, 
        \end{equation}
where $\tilde C = \tilde C( C )$.
\end{lemma}
\begin{proof}
 We consider the quantity $ \tfrac{1}{p}\norm{u(t,\cdot)}_{L^p({\mathbb{R}^d})}^p$ for $p \in (1,\infty)$. Taking the derivative with respect to time and integrating by parts yields
    \begin{align}\label{est:iscalar2.1}
        \frac{1}{p} \frac{{\rm d}}{{\rm d}t} \int_{\mathbb{R}^d} u^p \diff x&= - D (p-1) \int_{\mathbb{R}^d} \magg{\grad u} u^{p-2} \diff x-(p-1) \int_{\mathbb{R}^d} u^{p-1} \grad u \cdot K * \grad u \diff x\nonumber \\
        &= - \tfrac{4 D (p-1)}{p^2} \int_{\mathbb{R}^d} \magg{\grad u^{p/2}} \dx-(p-1) \int_{\mathbb{R}^d} u^{p-1} \grad u \cdot K * \grad u \diff x.
    \end{align}
We use the first term on the right hand side of \eqref{est:iscalar2.1} to control the second term using Cauchy's inequality with $\varepsilon$ and H{\"o}lder's inequality:
\begin{align}\label{est:iscalar2.2}
    (p-1) \int_{\mathbb{R}^d} u^{p-1} \grad u K * \grad u \diff x&= \tfrac{2(p-1)}{p} \int_{\mathbb{R}^d} \grad u^{p/2} u^{p/2} K * \grad u \diff x\nonumber \\
    &\leq \tfrac{\varepsilon (p-1)}{p} \int_{\mathbb{R}^d} \magg{\grad u^{p/2}} \diff x+ \tfrac{(p-1)}{\varepsilon p} \int_{\mathbb{R}^d} u^p \as{K * \grad u}^2 \diff x\nonumber \\
    &\leq \tfrac{\varepsilon (p-1)}{p} \int_{\mathbb{R}^d} \magg{\grad u^{p/2}} \diff x\nonumber \\
    &\quad + \tfrac{(p-1)}{\varepsilon p} \norm{(K * \grad u)(t,\cdot)}_{L^\infty ({\mathbb{R}^d})}^2 \int_{\mathbb{R}^d} u^p \diff x.
\end{align}
Choosing $\varepsilon = 2D / p$ we combine estimate \eqref{est:iscalar2.2} with \eqref{est:iscalar2.1} to find
\begin{align}\label{est:iscalar2.4}
    \frac{1}{p} \frac{{\rm d}}{{\rm d}t} \int_{\mathbb{R}^d} u^p \diff x+ \tfrac{2 D (p-1)}{p^2} \int_{\mathbb{R}^d}\magg{\grad u^{p/2}} \diff x&\leq \tfrac{(p-1)}{2D} \norm{(K*\grad u)(t,\cdot)}_{L^\infty ({\mathbb{R}^d})}^2 \int_{\mathbb{R}^d} u^p \diff x.
\end{align}
Gr{\"o}nwall's lemma implies that
\begin{align}\label{est:iscalar2.6}
    \norm{u(t,\cdot)}_{L^p({\mathbb{R}^d})} &\leq e^{C_3(T) p/D} \norm{u_0}_{L^p ({\mathbb{R}^d})},
\end{align}
where $C_3(T) = \norm{K * \grad u}_{L^2(0,T; L^\infty(\mathbb{R}^d))}^2 \leq C \norm{K}_{L^\infty (\mathbb{R}^d)}^2$ by estimate \eqref{est:scalar1.123_lembefore} (or \eqref{est:scalar1.123}). Taking the supremum over $t \in (0,T)$ yields estimate \eqref{est:scalar3}. Note carefully that this estimate does not allow one to take $p \to \infty$ as the coefficient on the right hand side depends critically on $p$.

Returning to estimate \eqref{est:iscalar2.4}, we fix $p=2$ and integrate both sides from $0$ to $T$ to obtain
\begin{align}
    \int_{\mathbb{R}^d} u^2 \diff x+ D \iint_{Q_T}  \magg{\grad u} \diff x{\rm d}t &\leq D^{-1} C \norm{K}_{L^\infty(\mathbb{R}^d)}^2 \sup_{t \in (0,T)} \norm{u(t,\cdot)}_{L^2 ({\mathbb{R}^d})}^2 + \norm{u_0}_{L^2 ({\mathbb{R}^d})}^2 .
\end{align}
Using estimate \eqref{est:iscalar2.6} with $p=2$ and rearranging the result yields estimate \eqref{est:scalar4}.
\end{proof}

\subsection{Existence of weak solutions (proof of Theorems \ref{thm:existsmallmassscalar}-\ref{thm:existarbmassscalar})}

We are now prepared to prove Theorems \ref{thm:existsmallmassscalar}-\ref{thm:existarbmassscalar}.
\begin{proof}[Proof of Theorems \ref{thm:existsmallmassscalar}-\ref{thm:existarbmassscalar}]
    We present the details for Theorem \ref{thm:existsmallmassscalar} only; the conclusion of Theorem \ref{thm:existarbmassscalar} follows in an identical fashion by using the estimates obtained in Lemma \ref{lem:iscalararbmass}, which are the same as those obtained in Lemma \ref{lem:iscalar} for the small mass regime.

    To start, we consider approximate solutions for any $\varepsilon>0$ given by
    \begin{align}\label{approximatescalar}
    \frac{\partial u^\varepsilon}{\partial t} = D \Delta u^\varepsilon + \grad \cdot (u^\varepsilon \grad K * {\omega}^\varepsilon * u^\varepsilon),
\end{align}
satisfying the initial data $u^\varepsilon (0,x) = u_0 (x)$ for each $\varepsilon>0$, where ${\omega}^\varepsilon$ denotes the standard mollifier. We seek to pass the limit as $\varepsilon \to 0^+$. First note that a unique, positive classical solution $u^\varepsilon(t,x)$ solving \eqref{approximatescalar} with $u(\cdot,x) = u_0(x)$ exists for each $\varepsilon>0$ by the methods used in the proof of, e.g., \cite[Theorem 2.2]{Carrillo2020LongTime}. Moreover, by Young's convolution inequality we have that
$$
\norm{K * {\omega}^\varepsilon}_{L^\infty({\mathbb{R}^d})} \leq \norm{K}_{L^\infty({\mathbb{R}^d})} \norm{{\omega}^\varepsilon}_{L^1({\mathbb{R}^d})} = \norm{K}_{L^\infty({\mathbb{R}^d})} ,
$$
and so if the small mass condition \eqref{const:c1} is satisfied, it is also satisfied for the regularized problem for all $\varepsilon\geq0$. Finally, for each $t>0$, the solution $u^{\varepsilon}$ is strictly positive by the strong maximum principle as in \eqref{eq:strong_maximum_principle}.

By Lemma \ref{lem:iscalar}, we have the following uniform bounds on the approximate solutions $u^\varepsilon$:
\begin{enumerate}
    \item[(A)]$\{ \grad \sqrt{u^\varepsilon} \}_{\varepsilon>0}$ is bounded in $L^2 (Q_T)$; 
    \item[(B)] $\{ \sqrt{u^\varepsilon} \grad K * {\omega}^\varepsilon * u^\varepsilon \}_{\varepsilon>0}$ is bounded in $L^2 (Q_T)$; 
    \item[(C)] $\{ \sup_{t\in(0,T)} \int_{\mathbb{R}^d} u^\varepsilon \magg{x} \diff x\}_{\varepsilon>0}$ is bounded.
\end{enumerate}

(A) implies that $2 \grad u^\varepsilon =\sqrt{u^\varepsilon} \grad \sqrt{ u^\varepsilon}$ belongs to $L^2(0,T; L^1({\mathbb{R}^d}))$ uniformly in $\varepsilon$. Moreover, $\{ \frac{\p u^\varepsilon}{\p t} \}_{\varepsilon>0}$ is bounded in $L^{\frac{4}{3}}(0,T; W^{-1,q}(\mathbb{R}^d) )$ for $q = \begin{cases} 
    \frac{2d}{2d-1} &\mbox{ for } d\neq 2,\\
    \frac{6}{5} &\mbox{ for } d= 2,
    \end{cases}$. Indeed, given any $\phi \in L^4(0,T; W^{1,q^\prime} (\mathbb{R}^d))$ H{\"o}lder's inequality gives
\begin{align*}
    \as{\int_0 ^T \int_{\mathbb{R}^d} \frac{\p u^\varepsilon}{\p t} \phi \diff x \diff t} &\leq D \norm{\grad u^\varepsilon}_{L^\frac{4}{3}(0,T; L^q(\mathbb{R}^d))} \norm{\grad \phi }_{L^4(0,T; L^{q^\prime} (\mathbb{R}^d))} \nonumber \\
    & + \norm{u^\varepsilon\, \grad K*\omega^\varepsilon *u^\varepsilon}_{L^\frac{4}{3}(0,T; L^q(\mathbb{R}^d))} \norm{\grad \phi}_{L^4(0,T; L^{q^\prime} (\mathbb{R}^d))} \nonumber \\
    &\leq C_5\,(1+D) \norm{\phi}_{L^{4} (0,T; W^{1, q^\prime} (\mathbb{R}^d))} ,
\end{align*}
where $C_5$ is a constant given by Lemma \ref{lem:exponent_q_weak_formulation_joint_space} which depends only on $\|K\ast \omega^{\varepsilon}\|_{L^{\infty}(\Rd)}$ and the norms of $u^{\varepsilon}$ and $\nabla \sqrt{u^{\varepsilon}}$ in $L^{\infty}(0,T; {L^1(\mathbb{R}^d)})$ and $L^{2}(Q_T)$, respectively.

Consequently, (A) paired with the conservation of mass implies that $u^\varepsilon \in L^2(0,T; W^{1,1} (\mathbb{R}^d))$) uniformly in $\varepsilon$. We then conclude by the Aubin-Lions lemma with
$$
W:= \left\{ v \in L^2 (0,T; W^{1,1}_{loc} (\mathbb{R}^d)) : \tfrac{\p v}{\p t} \in L^\frac{4}{3} (0,T; W^{-1,q}_{loc} (\mathbb{R}^d))\right\}
$$
that there exists a subsequence (still denoted by $\varepsilon$) such that
\begin{align}\label{conv:scalarstrong_L1}
u^\varepsilon \to u \quad \text{ strongly in }\quad L^1_{loc}(Q_T)
\end{align}
as $\varepsilon \to 0^+$. This implies that, perhaps for another subsequence,
\begin{align}\label{conv:scalarstrong}
    \sqrt{u^\varepsilon} \to \sqrt{u} \quad \text{ strongly in }\quad L^2_{loc}(Q_T)
\end{align}
as $\varepsilon \to 0^+$ by the Vitali convergence theorem. By (C) it is easy to see that given any $\delta>0$, for $R$ chosen sufficiently large there holds $\norm{u^\varepsilon}_{L^1(0,T;L^1(\mathbb{R}^d \setminus B_R))} < \delta$, independent of $\varepsilon$, and so $\{ u^\varepsilon \}_{\varepsilon>0}$ has compact closure in $L^1(Q_T)$. In particular, the strong convergence obtained in \eqref{conv:scalarstrong_L1}-\eqref{conv:scalarstrong} holds globally. 

Next, (B) implies that there exists $\xi \in L^2(Q_T)$ such that
\begin{align}\label{conv:scalarweak}
    \sqrt{u^\varepsilon} \grad K * {\omega}^\varepsilon * u^\varepsilon \to \xi \quad\text{ weakly in }\quad L^2(Q_T)
\end{align}
as $\varepsilon \to 0^+$. We claim that $\xi = \sqrt{u} \grad K*u$. To identify the limit, we may identify it in $L^1(0,T;L^2({\mathbb{R}^d}))$. Notice first that $\nabla K * {\omega}^\varepsilon *  u^\varepsilon$ is bounded in $L^2(0,T; L^\infty ({\mathbb{R}^d}))$ since $\grad u^\varepsilon \in L^2(0,T; L^1( \mathbb{R}^d))$, and so has a weak$^*$ limit in the same space. Hence, there holds
$$
\nabla K * {\omega}^\varepsilon * u^\varepsilon \to \nabla K *  u \quad\text{ weak*  in }\quad L^2(0,T; L^\infty({\mathbb{R}^d}))
$$
as $\varepsilon \to 0^+$. Furthermore, the limit can be identified as $\nabla K *  u = K * \nabla u$ because of the $L^\frac{4}{3}(0,T; L^q(\mathbb{R}^d))$ estimate on $\nabla u^{\varepsilon}$. Then, as $\sqrt{u^\varepsilon}$ converges strongly in $L^2(Q_T)$, given any $\phi \in L^\infty (0,T; L^2({\mathbb{R}^d}))$ there holds
\begin{align}
    &\as{\int_0 ^T \int_{\mathbb{R}^d} \left( \sqrt{u^\varepsilon} \grad K * {\omega}^\varepsilon * u^\varepsilon  - \sqrt{u} K * \grad u  \right) \phi \diff x \diff t} \leq \nonumber \\
    & \qquad \qquad \leq  \as{ \int_0 ^T \int_{\mathbb{R}^d} \left( [ \sqrt{u^\varepsilon} - \sqrt{u}\, ] \grad K * {\omega}^\varepsilon * u^\varepsilon ] \right) \phi \diff x \diff t} \nonumber \\
    &  \qquad \qquad \quad + \as{\int_0 ^T\int_{\mathbb{R}^d} \sqrt{u} \phi  \left( \grad K * {\omega}^\varepsilon * u^\varepsilon - K * \grad u  \right) \diff x \diff t} \nonumber \\
    & \qquad \qquad\leq \norm{\sqrt{u^\varepsilon} - \sqrt{u}}_{L^2(0,T;L^2(\mathbb{R}^d))} \norm{\grad K * {\omega}^\varepsilon * u^\varepsilon}_{L^2(0,T; L^\infty({\mathbb{R}^d}))} \norm{\phi }_{L^\infty (0,T; L^2({\mathbb{R}^d}))} \nonumber \\
    & \qquad \qquad \quad + \as{\int_0 ^T \int_{\mathbb{R}^d} \sqrt{u} \phi ( \grad K*{\omega}^\varepsilon * u^\varepsilon - K * \grad u ) \diff x \diff t} \to 0
\end{align}
as $\varepsilon\to0^+$. Notice that the second term converges to zero as $\varepsilon \to 0^+$ since $\sqrt{u} \phi \in L^2(0,T; L^1({\mathbb{R}^d}))$, the predual of $L^2(0,T; L^\infty({\mathbb{R}^d}))$. Hence, \eqref{conv:scalarstrong}-\eqref{conv:scalarweak} hold and $\xi = \sqrt{u} \grad (K*u)$ as asserted.

These convergence results allow us to conclude that in the limit as $\varepsilon \to 0^+$ there holds
$$
\int_0 ^T \left< \tfrac{\p u}{\p t}, \phi \right> \diff t + D \int_0 ^T \int_{\mathbb{R}^d} \grad u \cdot \grad \phi \, \diff x \diff t = - \int_0 ^T \int_\mathbb{R}^d u \grad (K*u) \cdot \grad \phi \, \diff x \diff t,
$$
for any $\phi$ smooth and compactly supported. By (A)-(B), the $L^\frac{4}{3}(0,T; L^q(\mathbb{R}^d))$-regularity of $\nabla u$ and $u\,\nabla (K\ast u)$, the $L^\frac{4}{3}(0,T; W^{-1,q}(\mathbb{R}^d))$-regularity of $\tfrac{\p u}{\p t}$ and a density argument, this weak formulation holds for any $\phi \in L^4(0,T; W^{1,q^\prime} (\mathbb{R}^d))$. Finally, by writing $u(t,\cdot) - u_0 = \int_0^t \tfrac{\p u}{\p s} \diff s$, we see that $u(t,\cdot)\to u_0(\cdot)$ in $W^{-1,q} (\mathbb{R}^d)$ and indeed the initial data is satisfied in the sense of $W^{-1,q} (\mathbb{R}^d)$. As $T>0$ was arbitrary, the solution obtained is in fact global.

To conclude, we recall that by Lemma \ref{lem:iscalar2} we have that $u \in L^\infty(0,T; L^p({\mathbb{R}^d}))$ for all $1 \leq p < \infty$ whenever $u_0\in L^p(\Rd)$ and $\grad u \in L^2(Q_T)$ if $u_0 \in L^2(\Rd)$.
\end{proof}

\subsection{Higher order estimates}

Under the additional hypothesis \textup{\textbf{\ref{h4}}} and further conditions on the initial data we can obtain better estimates on $\grad u$, $\Delta u$, and a uniform estimate on $u$ over $Q_T$.

\begin{lemma}[Higher order estimates]\label{lem:scalaresthigherorder}
Assume that $u_0 \in L^{\infty}(\Rd)$, $\grad u_0 \in L^2(\mathbb{R}^d)$ and $K*u_0 \in W^{2,p}(\mathbb{R}^d)$ for some $p>2$. Suppose hypothesis \textup{\textbf{\ref{h4}}} holds in addition to the conditions of Lemma \ref{lem:iscalar} (resp. Lemma \ref{lem:iscalararbmass}). Then, there exists a constant $\tilde C$  so that
\begin{align}
 \norm{\grad u }_{L^\infty (0,T; L^2 ({\mathbb{R}^d}))} &\leq \tilde C ; \label{est:scalar5} \\
    \norm{\Delta u}_{L^2 (Q_T)} &\leq \tilde C ;\label{est:scalar6} \\
         \norm{u }_{L^\infty (Q_T)} &\leq \tilde C, \label{est:scalarunif}
\end{align}
where $\tilde C = \tilde C(C, \norm{\grad (\tilde K*K)}_{L^2(\mathbb{R}^d)})$, and $C$ is given in Lemma \ref{lem:iscalar} (resp. Lemma \ref{lem:iscalararbmass}). 
 \end{lemma}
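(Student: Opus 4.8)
The plan is to test the regularized equation \eqref{approximatescalar} with $-\Delta u^\varepsilon$ to obtain the estimate on $\grad u$ in $L^\infty(0,T;L^2)$ and on $\Delta u$ in $L^2(Q_T)$, and then to upgrade to the $L^\infty(Q_T)$ bound by a Moser-type iteration (or, more cheaply, a Gagliardo--Nirenberg / Sobolev argument combined with the already-established $L^\infty(0,T;L^p)$ bounds). All estimates should be derived uniformly in $\varepsilon$ on the approximate solutions and then passed to the limit by lower semicontinuity, exactly as in the proof of Theorems \ref{thm:existsmallmassscalar}--\ref{thm:existarbmassscalar}; I will suppress the superscript $\varepsilon$ and the mollifier $\omega^\varepsilon$ in the sketch. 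Testing with $-\Delta u$ and integrating by parts gives
\begin{align*}
\frac12 \frac{\diff}{\diff t} \int_{\Rd} |\grad u|^2 \diff x + D \int_{\Rd} |\Delta u|^2 \diff x = \int_{\Rd} \Delta u \, \grad \cdot (u\, \grad K \ast u) \diff x .
\end{align*}
The right-hand side expands as $\int_{\Rd} \Delta u \, (\grad u \cdot \grad K \ast u + u\, \Delta K \ast u) \diff x$, so the two dangerous terms to control are $\int \Delta u\,\grad u \cdot \grad K\ast u$ and $\int \Delta u \, u\, \Delta K \ast u$.

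The heart of the argument, and where hypothesis \textbf{\ref{h4}} enters, is obtaining bounds on the velocity field $\grad K \ast u$ in $L^\infty(0,T;L^2)$ and on $\Delta K \ast u$ in $L^2(Q_T)$ despite $K$ not being differentiable. The idea flagged in the introduction is to test \eqref{eq:mainscalar} with $K \ast K \ast \Delta u$ — equivalently, to run an energy estimate for the quantity $\int_{\Rd} |\grad K \ast u|^2 \diff x = \int_{\Rd} \widetilde K \ast K \ast u \, (-\Delta u) \,? $ — more precisely one differentiates $\tfrac12\int_{\Rd} |\grad K \ast u|^2 \diff x$ in time, uses the equation for $u_t$, and integrates by parts so that the spatial derivatives land on $\widetilde K \ast K$, which by \textbf{\ref{h4}} has a gradient in $L^2$. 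Schematically, with $G := \widetilde K \ast K$ (so $\grad G \in L^2$),
\begin{align*}
\frac12 \frac{\diff}{\diff t} \int_{\Rd} |\grad K \ast u|^2 \diff x = \int_{\Rd} \grad G \ast u_t \cdot \grad u \,? \ \longrightarrow\ -D\int_{\Rd} |\grad K \ast \grad u|^2 + (\text{lower order}),
\end{align*}
and the cross terms coming from the nonlinearity $\grad\cdot(u\,\grad K\ast u)$ are handled using the already-available bounds: $u \in L^\infty(0,T;L^p)$ for all $p<\infty$, $\grad u \in L^2(Q_T)$, $\sqrt u\,\grad K\ast u \in L^2(Q_T)$, plus Young's convolution inequality to trade a derivative of $K$ against an $L^2$-norm of $G$ or an $L^1$-norm of $u$. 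This produces, after Gr\"onwall, the bounds $\grad K\ast u \in L^\infty(0,T;L^2)$ and $\Delta K\ast u \in L^2(Q_T)$ (the latter from the dissipation term). With these in hand one returns to the $-\Delta u$ estimate: the term $\int \Delta u\,\grad u\cdot\grad K\ast u$ is bounded via H\"older by $\|\Delta u\|_{L^2}\,\|\grad u\|_{L^4}\,\|\grad K\ast u\|_{L^4}$, interpolating $\|\grad u\|_{L^4}$ between $\|\grad u\|_{L^2}$ and $\|\Delta u\|_{L^2}$ and absorbing; the term $\int \Delta u\, u\, \Delta K\ast u$ is bounded by $\|\Delta u\|_{L^2}\,\|u\|_{L^\infty(0,T;L^r)}\,\|\Delta K\ast u\|_{L^2}$ with the $L^\infty_tL^r_x$ norm of $u$ from Lemma \ref{lem:iscalar2} and $\|\Delta K\ast u\|_{L^2(Q_T)}$ just established; both are controlled after absorbing a small multiple of $\|\Delta u\|_{L^2}^2$ into the left side. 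This yields \eqref{est:scalar5}--\eqref{est:scalar6}.

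Finally, for \eqref{est:scalarunif}: once $\grad u \in L^\infty(0,T;L^2)$ and $\Delta u \in L^2(Q_T)$, together with $u \in L^\infty(0,T;L^p)$ for every $p<\infty$, parabolic regularity / the $L^p$-theory for $u_t - D\Delta u = -\grad\cdot(u\,\grad K\ast u) =: \grad\cdot F$ with $F \in L^\infty(0,T;L^2)\cap(\text{better})$ — or alternatively a direct Moser iteration on $\|u\|_{L^p}$ exploiting that the coefficient in \eqref{est:scalar3} no longer blows up because $\grad K\ast u$ is now bounded in $L^\infty(0,T;L^\infty)$ via $\Delta K\ast u \in L^2(Q_T)$ and Sobolev embedding in the relevant range — gives $u \in L^\infty(Q_T)$. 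The assumption $K\ast u_0 \in W^{2,p}(\Rd)$, $p>2$, is exactly what makes the initial values of $\grad K\ast u$ and $\Delta K\ast u$ finite so that the time-integrated estimates close, and $\grad u_0 \in L^2$ is needed for the $-\Delta u$ estimate.

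\textbf{Main obstacle.} The delicate point is the energy estimate for $\grad K\ast u$: because $K$ is merely $L^\infty\cap\BV$, one cannot integrate by parts naively, and the whole point is to push \emph{two} derivatives onto $\widetilde K\ast K$ (using \textbf{\ref{h4}}) rather than one onto $K$. Making this rigorous on the mollified level, controlling the nonlinear cross terms with only the weak-solution regularity currently available, and then checking the limit $\varepsilon\to 0$ commutes with all these manipulations (in particular identifying $\grad K\ast u$ and $\Delta K\ast u$ as the weak limits) is the technical crux; the subsequent $-\Delta u$ estimate and the $L^\infty$ bound are comparatively routine parabolic bootstrapping.
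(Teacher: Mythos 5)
Your overall plan matches the paper's proof: test against $\tilde K * K * \Delta u$ to get bounds on $\nabla(K*u)$ and $\Delta(K*u)$ exploiting \textbf{\ref{h4}}, then test against $-\Delta u$, then iterate. But there is a genuine gap in the middle step. With only $\Delta(K*u) \in L^2(Q_T)$ in hand, your proposed H\"older bound
$\int |\Delta u|\,|u|\,|\Delta(K*u)| \lesssim \|\Delta u\|_{L^2}\,\|u\|_{L^r}\,\|\Delta(K*u)\|_{L^2}$
requires $\tfrac12 + \tfrac1r + \tfrac12 \le 1$, i.e.\ $r=\infty$ — which is precisely the bound \eqref{est:scalarunif} you are still trying to establish, not something available at this stage (Lemma \ref{lem:iscalar2} only gives $L^\infty_t L^p_x$ for finite $p$, and the exponential constant blows up as $p\to\infty$). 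The paper bridges this by an intermediate bootstrap you omit: observing that $K*u$ solves a heat equation with right-hand side $K*g$, $g=\nabla u\cdot K*\nabla u + u\,K*\Delta u \in L^2(0,T;L^1)$, so $K*g \in L^2(0,T;L^q)$ for every $q$ by Young's convolution inequality, whence maximal $L^p$-regularity for the heat equation (and the hypothesis $K*u_0 \in W^{2,p}$, $p>2$) gives $\Delta(K*u) \in L^2(0,T;L^p)$ for some $p>2$. Only then can one pair $\|u\|_{L^q}$ with $q=2p/(p-2)<\infty$ against $\|K*\Delta u\|_{L^p}$ and close the $-\Delta u$ estimate. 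Without this maximal-regularity step the argument does not close.

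Two smaller issues. First, for the term $\int \Delta u\,\nabla u\cdot\nabla(K*u)$ you propose $\|\nabla u\|_{L^4}\|\nabla(K*u)\|_{L^4}$ and Gagliardo--Nirenberg interpolation; the paper's estimate is cleaner and dimension-free, using $\|K*\nabla u\|_{L^\infty}\le\|K\|_{L^2}\|\nabla u\|_{L^2}$ with $\|\nabla u\|_{L^2}^2\in L^1(0,T)$ from Lemma \ref{lem:iscalar2}. Second, your route to \eqref{est:scalarunif} via ``$\Delta K*u\in L^2(Q_T)$ and Sobolev embedding'' to conclude $\nabla K*u\in L^\infty$ is not valid (an $L^2$-in-time bound on $\Delta(K*u)$ does not give uniform-in-time $L^\infty$ control of $\nabla(K*u)$). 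The paper instead uses the just-established \eqref{est:scalar5}: $\nabla u\in L^\infty(0,T;L^2)$ and $K\in L^2$ give $K*\nabla u\in L^\infty(Q_T)$ by Young's convolution inequality, and then runs an Alikakos iteration with Nash's inequality, carefully tracking how constants grow in $p$ so that the dyadic product $\prod_j(C_8 2^j)^{(d+2)/2^j}$ converges. Your alternative of citing a black-box Moser iteration is plausible but glosses over exactly this constant-tracking, which is where the work is.
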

 \begin{proof}
     Before estimating $u$ and its derivatives, we first obtain improved estimates on $K * \grad u$ and $K* \Delta u$. To this end, recall that $\tilde K(\cdot) := K(-\cdot)$ and multiply \eqref{eq:mainscalar} by $\tilde K * K * \Delta u$, integrate over ${\mathbb{R}^d}$, and integrate by parts to find
     \begin{align}\label{est:iscalar3.1}
         \frac{1}{2} \frac{{\rm d}}{{\rm d}t} \int_{\mathbb{R}^d} \magg{\grad (K*u)} \diff x&= - D \int_{\mathbb{R}^d} \magg{\Delta (K*u)} \diff x + \int_{\mathbb{R}^d}  \tilde K * K * \Delta u \grad \cdot ( u \grad (K*u) )\nonumber \\
         &= - D \int_{\mathbb{R}^d} \magg{\Delta (K*u)} \diff x + \int_{\mathbb{R}^d} \Delta ( K * u) K * ( \grad u \cdot \grad ( K *  u )) \diff x\nonumber \\
         &\quad + \int_{\mathbb{R}^d} u\, \Delta ( K * u ) (\tilde K * \Delta ( K * u ) ) \diff x.
     \end{align}

     We first estimate the second term on the right hand side of \eqref{est:iscalar3.1} via Cauchy's inequality with  $\varepsilon = D/2$, followed by Young's convolution inequality and H{\"o}lder's inequality:
     \begin{align}\label{est:iscalar3.2}
          \int_{\mathbb{R}^d} \Delta ( K * u) &\, K * ( \grad u \cdot K * \grad u ) \diff x\nonumber \\
          &\leq \tfrac{D}{4} \norm{\Delta (K*u)}_{L^2 ({\mathbb{R}^d})}^2 + D^{-1} \norm{K*( \grad u \cdot \grad (K*u)) }_{L^2(\mathbb{R}^d)}^2 \nonumber \\
          &\leq \tfrac{D}{4}\norm{\Delta (K*u)}_{L^2 ({\mathbb{R}^d})}^2 + D^{-1}\norm{K}_{L^2 ({\mathbb{R}^d})} ^2 \norm{\grad u \cdot \grad (K * u)}_{L^1 ({\mathbb{R}^d})}^2 \nonumber \\
          &\leq \tfrac{D}{4}\norm{\Delta (K*u)}_{L^2 ({\mathbb{R}^d})}^2 + D^{-1} \norm{K}_{L^2 ({\mathbb{R}^d})}^2 \norm{\grad u(t,\cdot)}_{L^2 ({\mathbb{R}^d})}^2 \norm{\grad (K*u)}_{L^2 ({\mathbb{R}^d})}^2  \nonumber \\
          &=: \tfrac{D}{4}\norm{\Delta (K*u)}_{L^2 ({\mathbb{R}^d})}^2 + D^{-1} a^2(t) \norm{\grad (K*u)}_{L^2 ({\mathbb{R}^d})}^2 ,
     \end{align}
     where $a(t) := \norm{K}_{L^2 ({\mathbb{R}^d})} \norm{\grad u(t,\cdot)}_{L^2 ({\mathbb{R}^d})}$. Notice that $a^2(t) \in L^1 (0,T)$ for any fixed $T>0$ by Lemma \ref{lem:iscalar2}. 

     Next, we estimate the third term appearing in \eqref{est:iscalar3.1} via two applications of H{\"o}lder's inequality followed by Cauchy's inequality with $\varepsilon = D/2$:
     \begin{align}
         \int_{\mathbb{R}^d} u\, \Delta ( K * u ) (\tilde K * \Delta ( K * u ) ) \diff x&\leq \norm{u(t,\cdot)}_{L^2 ({\mathbb{R}^d})} \norm{\Delta (K*u) (\tilde K* \Delta (K*u) )}_{L^2 ({\mathbb{R}^d})} \nonumber \\
         &\leq \norm{u(t,\cdot)}_{L^2 ({\mathbb{R}^d})} \norm{\tilde K* (\Delta (K*u) )}_{L^{\infty} ({\mathbb{R}^d})} \norm{\Delta (K*u)}_{L^{2} ({\mathbb{R}^d})} \nonumber \\
         &\leq \frac{D}{4} \norm{\Delta (K*u)}_{L^{2} ({\mathbb{R}^d})}^2 \nonumber \\
         &\quad +  D^{-1} \norm{u(t,\cdot)}_{L^2 ({\mathbb{R}^d})}^2 \norm{\tilde K* (\Delta (K*u) )}_{L^{\infty} ({\mathbb{R}^d})} ^2,
     \end{align}

     We must ensure that the second product belongs to $L^1 (0,T)$. By Lemma \ref{lem:iscalar2}, we know that $u \in L^\infty (0,T; L^2 ({\mathbb{R}^d}))$. For the remaining term, we use the fact that
     \begin{align*}
         \tilde K* \Delta (K*u) = \grad (\tilde K * K) * \grad u .
     \end{align*}
     Hence, we may estimate via Young's convolution inequality
     \begin{align}
          \norm{\tilde K* (\Delta (K*u) )}_{L^{\infty} ({\mathbb{R}^d})} ^2 &\leq \norm{\grad (\tilde K*K)}_{L^2 ({\mathbb{R}^d})}^2 \norm{\grad u(t,\cdot)}_{L^2 ({\mathbb{R}^d})}^2, 
     \end{align}
     from which we conclude that
     \begin{align}\label{est:iscalar3.3}
         \int_{\mathbb{R}^d} u\, \Delta ( K * u ) (\tilde K * \Delta ( K * u ) ) \diff x&\leq \frac{D}{4} \norm{\Delta (K*u)}_{L^2 ({\mathbb{R}^d})}^2 \nonumber \\
         &+ D^{-1} b^2(t)
     \end{align}
     where
     \begin{align*}
         b(t) := \norm{\grad (\tilde K*K)}_{L^2 ({\mathbb{R}^d})} \sup_{t \in (0,T)}\norm{u(t,\cdot)}_{L^2 ({\mathbb{R}^d})} \norm{\grad u(t,\cdot)}_{L^2 ({\mathbb{R}^d})},
     \end{align*}
     and $b^2(t) \in L^1(0,T)$ by hypothesis \textup{\textbf{\ref{h4}}} and by estimate \eqref{est:scalar4} of Lemma \ref{lem:iscalar2}.
     
    Combining estimates \eqref{est:iscalar3.2} and \eqref{est:iscalar3.3} with \eqref{est:iscalar3.1}, we finally obtain
     \begin{align}\label{est:iscalar3.4}
         \frac{{\rm d}}{{\rm d} t} \int_{\mathbb{R}^d} \magg{\grad (K*u)} \diff x+ D \int_{\mathbb{R}^d} \magg{\Delta (K*u)} \diff x&\leq D^{-1} \left( a^2 (t) \norm{\grad (K*u))}_{L^2 ({\mathbb{R}^d})}^2  + b^2 (t)\right).
     \end{align}
     By Gronwall's lemma we conclude that
     \begin{align}\label{est:0.001}
         \sup_{t \in (0,T)} \norm{\grad (K*u)(t,\cdot)}_{L^2 ({\mathbb{R}^d})} ^2 &\leq D^{-1} A(T) \left( \norm{\grad (K * u_0)}_{L^2 ({\mathbb{R}^d})}^2 + \int_0 ^T b^2 (t) {\rm d}t \right),
     \end{align}
     where $A(t) := e^{\int_0 ^t a^2 (s) {\rm d}s}$. Hence, $\grad (K*u) \in L^\infty(0,T; L^2(\Omega))$. 

     Returning to estimate \eqref{est:iscalar3.4}, we extract the second term on the left hand side and use estimate \eqref{est:0.001} to conclude that in fact
     \begin{align}
         \norm{\Delta (K*u)}_{L^2 (Q_T)} &\leq 2 D^{-1} \left( \sup_{t \in (0,T)} \norm{\grad (K*u) (t,\cdot)}_{L^2 ({\mathbb{R}^d})} \norm{a(\cdot)}_{L^2 (0,T)} + \norm{b(\cdot)}_{L^2 (0,T)}  \right),
     \end{align}
     and so $\Delta (K*u) \in L^2(Q_T)$.

     We now improve the estimate on $\Delta (K * u)$ from $L^2 (Q_T)$ to $L^2 (0,T; L^p ({\mathbb{R}^d}))$ for some $p>2$. To this end, notice first that if $u$ solves the heat equation
     $$
\frac{\p u}{\p t} - D \Delta u = g \in L^2 (0,T; L^1 ({\mathbb{R}^d})) ,
     $$
     for some function $g(t,x)$, then by the linearity of the convolution we have also that $K * u$ solves
     $$
\frac{\p}{\p t} ( K*u) - D \Delta (K*u) = K * g.
     $$
     In our case, using the improved estimates on $K * \grad u$, $K* \Delta u$, we have
     $$
     g = \grad u\, K* \grad u + u \, K* \Delta u \, \in L^2 (0,T; L^1 ({\mathbb{R}^d})),
     $$
     so that $K*g \in L^2 (0,T; L^q ({\mathbb{R}^d}))$ for any $q \in [1,\infty]$ since $K \in L^1 ({\mathbb{R}^d}) \cap L^\infty ({\mathbb{R}^d})$. Hence, by the maximal regularity of the heat equation (see, e.g., \cite[Theorem 3.1]{HieberPruss1997}), we find that $\Delta (K*u) \in L^2 (0,T; L^p ({\mathbb{R}^d}))$ where $p$ is an exponent such that $K * u_0 \in W^{2,p}(\Rd)$.

     We are now ready to obtain estimates \eqref{est:scalar5}-\eqref{est:scalar6}. Multiplying \eqref{eq:mainscalar} by $\Delta u$ and integrating over ${\mathbb{R}^d}$ yields
     \begin{align}\label{est:scalar8}
         \frac{{\rm d}}{{\rm d}t} \int_{\mathbb{R}^d} \magg{\grad u} \diff x+ D \int_{\mathbb{R}^d} \magg{\Delta u} \diff x\leq \int_{\mathbb{R}^d} \as{u\, \Delta u (K*\Delta u)} \diff x+ \int_{\mathbb{R}^d} \as{\grad u\, \Delta u (K * \grad u)} \diff x.
     \end{align}
We estimate the right hand side via Cauchy's inequality. For the first term we have
     \begin{align}
         \int_{\mathbb{R}^d} \as{u\, \Delta u (K*\Delta u)} \diff x&\leq \frac{D}{4} \int_{\mathbb{R}^d} \magg{\Delta u} \diff x+ D^{-1} \int_{\mathbb{R}^d} \magg{u} \magg{K * \Delta u} \diff x\nonumber \\
         &\leq \frac{D}{4} \int_{\mathbb{R}^d} \magg{\Delta u} \diff x+ D^{-1} \norm{u(t,\cdot)}_{L^{q} ({\mathbb{R}^d})}^2 \norm{K * \Delta u (t,\cdot)}_{L^p ({\mathbb{R}^d})}^2 \nonumber \\
         &\leq \frac{D}{4} \int_{\mathbb{R}^d} \magg{\Delta u} \diff x+ D^{-1} \norm{u}_{L^\infty (0,T; L^q ({\mathbb{R}^d}))}^2 \norm{K * \Delta u (t,\cdot)}_{L^p ({\mathbb{R}^d})}^2 \nonumber \\
         &\leq \frac{D}{4} \int_{\mathbb{R}^d} \magg{\Delta u} \diff x+ D^{-1} e^{8 C(T) / D} \norm{u_0}_{L^q ({\mathbb{R}^d})}^2 \norm{K * \Delta u (t,\cdot)}_{L^p ({\mathbb{R}^d})}^2 ,
     \end{align}
     where we have used H{\"o}lder's inequality and that $u \in L^\infty (0,T; L^q ({\mathbb{R}^d}))$ for any exponent $q := 2p/(p-2) \in (1,\infty)$. From our improved estimates on $\Delta (K*u)$ we have already shown that $\norm{K * \Delta u (t,\cdot)}_{L^p ({\mathbb{R}^d})}^2  \in L^1 (0,T)$ for some $p>2$. 

     For the second term, we again use Cauchy's inequality
     \begin{align*}
         \int_{\mathbb{R}^d} \as{\grad u \Delta u (K * \grad u)} \diff x&\leq \frac{D}{4} \int_{\mathbb{R}^d} \magg{\Delta u} \diff x+ D^{-1} \int_{\mathbb{R}^d} \magg{K * \grad u} \magg{\grad u} \diff x\nonumber \\
         &\leq \frac{D}{4} \int_{\mathbb{R}^d} \magg{\Delta u} \diff x+ D^{-1} \norm{K * \grad u (t,\cdot)}_{L^\infty ({\mathbb{R}^d})}^2 \int_{\mathbb{R}^d} \magg{\grad u} \dx,
     \end{align*}
     where $\norm{K * \grad u (t,\cdot)}_{L^\infty ({\mathbb{R}^d})} \in L^1 (0,T)$ since $\grad u \in L^2 (Q_T)$ and $K \in L^1(\mathbb{R}^d) \cap L^\infty ({\mathbb{R}^d})$. Hence, \eqref{est:scalar8} becomes
     \begin{align}
                  \frac{{\rm d}}{{\rm d}t} \int_{\mathbb{R}^d} \magg{\grad u} \diff x+ \frac{D}{2} \int_{\mathbb{R}^d} \magg{\Delta u} \diff x\leq C_1 (t) + C_2 (t) \int_{\mathbb{R}^d} \magg{\grad u} \diff x,
     \end{align}
     for functions $C_1(t)$, $C_2(t) \in L^1 (0,T)$. Gr{\" o}nwall's lemma implies $\Delta u \in L^2 (0,T; L^2 ({\mathbb{R}^d}))$ and $\grad u \in L^\infty (0,T; L^2 ({\mathbb{R}^d}))$ from which estimates \eqref{est:scalar5}-\eqref{est:scalar6} follow.

     We now obtain a uniform bound on the solution $u(t,x)$ via an Alikakos type argument \cite{Alikakos1979}, which arises as a consequence of $\grad u \in L^\infty(0,T;L^2 ({\mathbb{R}^d})) \Rightarrow K * \grad u \in L^\infty (Q_T)$. Set $C_0 := \norm{K * \grad u}_{L^\infty (Q_T)}$. To begin, we return to the right hand side of estimate \eqref{est:iscalar2.1} and apply Cauchy's inequality
     \begin{align*}
         -(p-1) \int_{\mathbb{R}^d} u^{p-1} \grad u K * \grad u \diff x&= - \tfrac{2(p-1)}{p} \int_{\mathbb{R}^d} u^{p/2} \grad ( u^{p/2} ) K * \grad u \diff x\\
         &\leq \tfrac{2(p-1)}{p} C_0 \int_{\mathbb{R}^d} u^{p/2} \as{\grad u^{p/2}} \diff x\\
         &\leq C_0 \int_{\mathbb{R}^d} \left( \varepsilon \magg{\grad u^{p/2}} + \frac{1}{\varepsilon} u^p \right) \dx,
     \end{align*}
     where $\varepsilon>0$ is to be chosen. We then recall the Nash's inequality: there exists $C_N>0$ such that for $g \in L^1(\mathbb{R}^d)\cap W^{1,2}(\mathbb{R}^d)$ there holds
     $$
     \|g\|^2_{L^2(\Rd)} \leq C_N \, \|g\|^{4/(d+2)}_{L^1(\Rd)}\, \|\nabla g\|^{2d/(d+2)}_{L^2(\Rd)} \leq \delta \, C_N^{(d+2)/d} \,  \|\nabla g\|^{2}_{L^2(\Rd)} + \delta^{-d/2}\|g\|_{L^1(\Rd)}^2,  
     $$
     where the second step is a consequence of the Young's inequality with conjugate exponents $\frac{d+2}{2}$ and $\frac{d+2}{d}$, and $\delta>0$ is to be chosen. We apply it with $g = u^{p/2}$ and we combine it with estimate \eqref{est:iscalar2.1} to obtain
         \begin{align}\label{est:iscalar2.1.22}
        \frac{1}{p} \frac{{\rm d}}{{\rm d}t} \int_{\mathbb{R}^d} u^p \dx
        \leq &- \tfrac{4 D (p-1)}{p^2} \|\nabla u^{p/2}\|^{2}_{L^2(\Rd)}+ \varepsilon \, C_0 \|\nabla u^{p/2}\|^{2}_{L^2(\Rd)} \nonumber \\
        &+ \frac{1}{\varepsilon} C_0 \left(\delta \, C_N^{(d+2)/d} \,  \|\nabla u^{p/2}\|^{2}_{L^2(\Rd)} + \delta^{-d/2}\|u^{p/2}\|_{L^1(\Rd)}^2  \right) .
    \end{align}
    We now set $\delta \, C_N^{(d+2)/d} = \varepsilon^2$ and choose $\varepsilon = \tfrac{2 D (p-1)}{C_0 p^2}$ so that
     $$
 2 C_0 \varepsilon = \frac{4 D (p-1)}{p^2} \Rightarrow \varepsilon = \frac{2 D (p-1)}{C_0 p^2}.
    $$
    It follows that the gradient term vanishes and \eqref{est:iscalar2.1.22} becomes
    \begin{align}
        \frac{1}{p} \frac{{\rm d}}{{\rm d}t} \int_{\mathbb{R}^d} u^p \diff x&\leq C_6\,  \varepsilon^{-d-1} \norm{u^{p/2} (t,\cdot)}_{L^1 ({\mathbb{R}^d})}^2 \leq  C_7\, p^{d+1} \norm{u^{p/2} (t,\cdot)}_{L^1 ({\mathbb{R}^d})}^2,
    \end{align}
    where $ C_6$, $C_7$ depend on $C_N$, $C_0$, $D$ but do not depend on $p$. Integrating both sides from $0 \to t$ yields
    \begin{align}
        \norm{u(t,\cdot)}_{L^p ({\mathbb{R}^d})} ^p \leq \norm{u_0}_{L^p ({\mathbb{R}^d})}^p +  C_7\, p^{d+2} T \norm{u}_{L^\infty (0,T; L^{p/2} ({\mathbb{R}^d})}^p .
    \end{align}
    Without loss of generality, for $T$ fixed we may assume that 
    $$
    \norm{u_0}_{L^p ({\mathbb{R}^d})}^p \leq C_7\, p^{d+2}\, T \norm{u}_{L^\infty (0,T; L^{p/2} ({\mathbb{R}^d}))}^p,
    $$
    for all $p \geq p^*$ for some $p^* \geq 1$. Otherwise, there exists a sequence $\{ p_k\}_{k \geq 0}$ such that $p_k \to +\infty$ as $k \to +\infty$ so that
    $$
\norm{u_0}_{L^{p_k} ({\mathbb{R}^d})}^{p_k} \geq  C_7\, {p_k}^{d+2}\, T \norm{u}_{L^\infty (0,T; L^{p_k /2} ({\mathbb{R}^d}))}^{p_k}
    $$
    from which the boundedness of $u$ follows. Set $C_8 = (2 C_7 T)^{1/(d+2)}$ and  $M(p) := \norm{u}_{L^\infty (0,T; L^p ({\mathbb{R}^d}))}.$
    We then have that
    $$
        M(p) \leq ( C_8\, p )^{
        (d+2)/p} M(p/2) , \quad \forall p \geq p^*.
    $$
    Take $i \geq 1$ so that $2^i \geq p^*$. For any fixed $k$ we proceed iteratively to obtain
    \begin{align}
        M( 2^k) \, &\leq \, ( C_8 2^{k} )^{(d+2)/{2^k}} M(2^{k-1}) \, \leq \, ( C_8 2^{k} )^{(d+2)/{2^k}}  ( C_8 2^{(k-1)} )^{(d+2)/{2^{k-1}}} M(2^{k-2}) \nonumber \\
        &\ldots \leq M(2^{i-1}) \prod_{j=i}^k ( C_8 2^{j})^{(d+2)/2^{j}} .
    \end{align}
    It follows that $M(2^k)$ is bounded independent of $k$ so long as
    $$
\lim_{k \to \infty} \prod_{j=i}^k ( C_8 2^{j})^{(d+2)/2^{j}} < \infty \iff \sum_{j=i}^\infty \frac{(d+2) \log (C_8 2^{j})}{2^j} < \infty ,
    $$
which readily follows from the fact that $2^j \gg (d+2)\,j$. Hence, $u$ is uniformly bounded in $Q_T$ for any $T>0$ fixed and estimate \eqref{est:scalarunif} follows.
 \end{proof}

 \begin{lemma}[Improved integrability of gradient]\label{lem:scalarLpgradient}
     Suppose $\grad u_0 \in L^p(\mathbb{R}^d)$ for some $p \in [2,\infty)$. Under the conditions of Lemma \ref{lem:scalaresthigherorder} there exists a constant $\tilde C_0$ such that there holds
     \begin{align}\label{est:graduLp}
         \norm{\grad u}_{L^\infty (0,T; L^p(\mathbb{R}^d))} \leq \tilde C_0,
     \end{align}
     where $\tilde C_0 = \tilde C_0 (\tilde C, p, \norm{\grad u_0}_{L^p (\mathbb{R}^d)})$, and $\tilde C$ is given in Lemma \ref{lem:scalaresthigherorder}.
 \end{lemma}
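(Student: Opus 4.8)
The plan is to differentiate \eqref{eq:mainscalar} in a spatial direction and treat the resulting linear parabolic equation for $\partial_\ell u$, with its bounded drift, through a finite bootstrap in the spatial integrability exponent. Rewrite \eqref{eq:mainscalar} as $\partial_t u = D\Delta u + \nabla\cdot(uV)$ with $V := \nabla K * u = K * \nabla u$. From Lemma \ref{lem:scalaresthigherorder} I already have $u \in L^\infty(Q_T)$, $\nabla u \in L^\infty(0,T;L^2(\Rd))$ and $\Delta u \in L^2(Q_T)$ — hence $\nabla^2 u \in L^2(Q_T)$ by Plancherel on $\Rd$. Since $K \in L^1(\Rd) \cap L^\infty(\Rd)$ this gives $V \in L^\infty(Q_T)$ and $\nabla\cdot V = K*\Delta u \in L^2(0,T;L^q(\Rd))$ for every $q \in [2,\infty]$, while hypothesis \textbf{\ref{h2}} (that $\nabla K$ is a finite measure) gives $\partial_\ell(\nabla\cdot V) = (\partial_\ell K)*\Delta u \in L^2(Q_T)$, with norm controlled by $\|\nabla K\|_{\text{TV}}\|\Delta u\|_{L^2(Q_T)}$. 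Differentiating in $x_\ell$, the function $w_\ell := \partial_\ell u$ solves
\begin{equation*}
\partial_t w_\ell - D\Delta w_\ell - \nabla\cdot(w_\ell V) = \nabla u \cdot \partial_\ell V + u\,\partial_\ell(\nabla\cdot V) ,
\end{equation*}
and I would justify this identity by performing the computation on the regularized solutions $u^\varepsilon$ of \eqref{approximatescalar} and passing to the limit, exactly as in the proof of Lemma \ref{lem:scalaresthigherorder}.

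Next I would use that $w \mapsto D\Delta w + \nabla\cdot(wV)$ is in divergence form with bounded lower-order coefficient $V$, so its fundamental solution obeys Gaussian bounds and the associated evolution family $\{U(t,s)\}$ enjoys the same $L^\rho \to L^R$ smoothing as the heat semigroup, $\|U(t,s)h\|_{L^R(\Rd)} \lesssim (t-s)^{-\frac d2(\frac1\rho - \frac1R)} \|h\|_{L^\rho(\Rd)}$ for $2 \le \rho \le R$, with constants depending only on $\|V\|_{L^\infty(Q_T)}$, $D$ and $T$. The inductive step is: assuming $\nabla u \in L^\infty(0,T;L^r(\Rd))$ for some $r \in [2,p]$, one has $F := \nabla u\cdot V + u\,(K*\Delta u) \in L^2(0,T;L^r(\Rd))$, so maximal parabolic regularity for $\partial_t u - D\Delta u = F$ with the datum $\nabla u_0 \in L^r(\Rd)$ upgrades $\nabla^2 u$ to $L^{q_*}(0,T;L^r(\Rd))$ for every $q_*<2$ (the restriction $q_*<2$ being due only to the homogeneous part $\nabla e^{tD\Delta}\nabla u_0$). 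Hence $\nabla u\cdot\partial_\ell V = \nabla u\cdot(K*\nabla^2 u)$ and $u\,\partial_\ell(\nabla\cdot V) = u\,(\partial_\ell K)*\Delta u$ lie in $L^{q_*}(0,T;L^r(\Rd))$; representing $w_\ell$ by Duhamel's formula against $U(t,s)$ and applying Young's convolution inequality in time, one concludes $\nabla u \in L^\infty(0,T;L^R(\Rd))$ for every $R$ with $\tfrac1r - \tfrac1R$ below a fixed positive quantity of order $1/d$. Starting from $\nabla u \in L^\infty(0,T;L^2(\Rd))$ and capping each step at exponent $p$, finitely many iterations reach $\nabla u \in L^\infty(0,T;L^p(\Rd))$; at each step the datum $\nabla u_0 \in L^{r_k}(\Rd)$ for the intermediate exponent $r_k \in [2,p]$ follows by interpolating $\nabla u_0 \in L^2(\Rd)$ and $\nabla u_0 \in L^p(\Rd)$, and the number of steps — hence the final constant $\tilde C_0$ — depends only on $d$, $p$, $\|\nabla u_0\|_{L^p(\Rd)}$ and the constant $\tilde C$ of Lemma \ref{lem:scalaresthigherorder}.

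The main obstacle is the term $u\,(\partial_\ell K)*\Delta u$: it is the only source term whose integrability is not immediately controlled by $\nabla u$, because $\partial_\ell K$ is merely a measure and one can only convolve it with $\Delta u$ in whatever Lebesgue space $\Delta u$ is currently known to lie in. This is precisely what forces the coupled bootstrap of $\nabla u$ and $\nabla^2 u$ above, together with the use of anisotropic parabolic $L^{q}(0,T;L^{r})$ regularity for an initial datum not in the sharp trace space — a difficulty I would sidestep by allowing $q_*<2$, which still leaves room in the time Young inequality. A more computational alternative would be a direct energy estimate for $\tfrac1p\tfrac{d}{dt}\|\nabla u\|_{L^p(\Rd)}^p$, testing the equation for $w_\ell$ with $|w_\ell|^{p-2}w_\ell$: the transport term is harmless after an integration by parts, but the term $u\,|w_\ell|^{p-2}w_\ell\,(\partial_\ell K)*\Delta u$ would again require $\Delta u \in L^r(\Rd)$ with $r>2$, leading back to the same iteration.
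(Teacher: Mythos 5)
Your proposal takes a genuinely different, and much heavier, route than the paper, and it is motivated by a supposed obstruction that is not actually there. The paper never differentiates the equation in $x_\ell$. Instead it tests the \emph{undifferentiated} equation \eqref{eq:mainscalar} with the $p$-Laplacian-type test function $-\DIV\!\left(|\nabla u|^{p-2}\nabla u\right)$: the left-hand side integrates by parts to $\tfrac1p\tfrac{{\rm d}}{{\rm d}t}\int_{\Rd}|\nabla u|^p\,\dx$, while the right-hand side is built from the three terms $D\Delta u$, $\nabla u\cdot(K*\nabla u)$, and $u\,(K*\Delta u)$ that already appear in the equation. No derivative ever lands on $K$, so the only nonlocal factors that arise are $K*\nabla u$ and $K*\Delta u$, both of which lie in $L^2(0,T;L^\infty(\Rd))$ because $K\in L^2(\Rd)$ while $\nabla u\in L^\infty(0,T;L^2(\Rd))$ and $\Delta u\in L^2(Q_T)$ from Lemmas \ref{lem:iscalar2} and \ref{lem:scalaresthigherorder}. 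Two applications of Cauchy's inequality — with the diffusion absorbing the $|\nabla u|^{p-2}|\Delta u|^2$ contributions — followed by the elementary bound $\int|\nabla u|^{p-2}\dx\le\int(|\nabla u|^p+|\nabla u|^2)\dx$ and Gr\"onwall close the estimate in a few lines, with no bootstrap and no maximal regularity machinery.

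Your secondary remark, that a direct $L^p$ energy estimate on $w_\ell=\partial_\ell u$ ``would again require $\Delta u\in L^r$ with $r>2$,'' is the crux of your detour, and it is incorrect on two counts. First, as above, the paper's choice of test function avoids $\partial_\ell K$ entirely, so the term you worry about never appears. Second, even in your own formulation the term $\int u\,\bigl((\partial_\ell K)*\Delta u\bigr)|w_\ell|^{p-2}w_\ell\,\dx$ is harmless: write $(\partial_\ell K)*\Delta u=\partial_\ell(K*\Delta u)$ and integrate by parts, transferring $\partial_\ell$ onto $u\,|w_\ell|^{p-2}w_\ell$. This produces $-\int (K*\Delta u)\,|w_\ell|^p\,\dx$, which is Gr\"onwallable because $K*\Delta u\in L^2(0,T;L^\infty)$, and $-(p-1)\int (K*\Delta u)\,u\,|w_\ell|^{p-2}\partial_\ell w_\ell\,\dx$, which is absorbed by the diffusion term $D(p-1)\int|w_\ell|^{p-2}|\nabla w_\ell|^2\,\dx$ via Cauchy with $u\in L^\infty(Q_T)$. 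So even the energy approach you abandoned closes without any iteration.

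Your primary plan — Duhamel against the evolution family $U(t,s)$ of $\partial_t-D\Delta-\nabla\!\cdot(\cdot\,V)$ plus a bootstrap on $r$ — is plausible in outline, but it leans on machinery whose applicability you do not actually establish. Gaussian upper bounds and $L^\rho\to L^R$ smoothing for the inhomogeneous evolution family with merely bounded, time-dependent drift $V$ would need to be cited precisely (Aronson's estimates plus a well-posedness statement for $U(t,s)$); the step where maximal parabolic regularity with the non-sharp datum $\nabla u_0\in L^r$ only yields $\nabla^2 u\in L^{q_*}(0,T;L^r)$ for $q_*<2$ is asserted but not justified (it is the trace space for $u_0$, not $\nabla u_0$, that governs this, and the bookkeeping matters); and the number of bootstrap steps depends on $d$, so the claimed dependence of $\tilde C_0$ would have to be tracked carefully. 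None of this is needed: the paper's one-shot energy estimate, which you dismissed too quickly, gives the lemma directly.
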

 \begin{proof}
     Under the conditions of Lemma \ref{lem:scalaresthigherorder}, we have that
     $$
u \in L^\infty (Q_T);\quad \grad u \in L^\infty(0,T; L^2(\Omega)); \quad \Delta u \in L^2(Q_T).
     $$
     Assume that $u$ is a smooth solution to problem \eqref{eq:mainscalar}. For any $p \in [2,\infty)$, we multiply the equation for $u$ by
    $$
    -\mbox{div}\left(\as{\grad u}^{p-2} \nabla u \right) = -(p-1)\, \Delta u\, |\nabla u|^{p-2}
    $$
    and integrate over $\mathbb{R}^d$ to obtain
     \begin{align}\label{eq:lemLpgrad1.1}
     \int_{\mathbb{R}^d} \as{\grad u}^{p-2}  \nabla u \, \nabla u_t \diff x =  -(p-1) \int_{\mathbb{R}^d} \as{\grad u}^{p-2}\, \Delta u\,  ( D \Delta u + \grad u K * \grad u + u K* \Delta u) \diff x .
     \end{align}
Clearly, the left hand side of \eqref{eq:lemLpgrad1.1} equals $\frac{1}{p} \int_{\mathbb{R}^d} \as{\grad u}^p \diff x$ and so rearrangement of \eqref{eq:lemLpgrad1.1} yields
\begin{align}
    \frac{1}{p\,(p-1)} \frac{{\rm d}}{{\rm d}t} \int_{\mathbb{R}^d} \as{\grad u}^p \dx \leq& - D \int_{\mathbb{R}^d} \as{\grad u}^{p-2} \as{\Delta u}^2 \dx + \int_{\mathbb{R}^d} \as{\grad u}^{p-1} \as{\Delta u} \as{K* \grad u} \dx \nonumber \\
    &+ \int_{\mathbb{R}^d} \as{\grad u}^{p-2} u \as{\Delta u} \as{K* \Delta u} \nonumber \\
    =:& - D \int_{\mathbb{R}^d} \as{\grad u}^{p-2} \as{\Delta u}^2 \dx + I_1 + I_2 .
\end{align}
We estimate $I_1$, $I_2$ separately. For $I_1$, Cauchy's inequality yields
\begin{align}\label{eq:lemLpgrad1.3}
    I_1 \leq &\, \frac{D}{2} \int_{\mathbb{R}^d} \as{\grad u}^{p-2} \as{\Delta u}^2 \dx + \frac{1}{2D} \int_{\mathbb{R}^d} \as{\grad u}^p \as{K*\grad u}^2 \dx \nonumber \\
    \leq& \frac{D}{2} \int_{\mathbb{R}^d} \as{\grad u}^{p-2} \as{\Delta u}^2 \dx + C_1 \int_{\mathbb{R}^d} \as{\grad u}^p \dx ,
\end{align}
where
$$
C_1 := \frac{\norm{K}_{L^2(\mathbb{R^d})}^2 \norm{\grad u(t,\cdot)}_{L^2(\mathbb{R}^d)}^2}{2D} < \infty,
$$
since $\grad u \in L^\infty(0,T; L^2(\mathbb{R}^d))$. For $I_2$, we first estimate via Cauchy's inequality as

     \begin{align}\label{eq:lemLpgrad1.5}
         I_2  \leq \frac{D}{2} \int_{\mathbb{R}^d} \as{\grad u}^{p-2} \as{\Delta u}^2 \dx + C_2 \int_{\mathbb{R}^d} \as{\grad u}^{p-2} \dx ,
     \end{align}
     where
     $$
C_2(t) := \frac{\norm{u}_{L^\infty(Q_T)}^2 \norm{K}_{L^2(\mathbb{R}^d)}^2 \norm{\Delta u(t,\cdot)}_{L^2(\mathbb{R}^d)}^2}{2D } \in L^1(0,T),
     $$
     since $u \in L^\infty(Q_T)$ and $\Delta u \in L^2(Q_T)$. 

     To estimate $\int \as{\grad u}^{p-2} \dx$, we first suppose that $p > 4$ (the case $p=4$ is direct) and apply Young's inequality for products with $p^\prime = (p-2)/(p-4)$, $q^\prime = (p-2)/2$:
     \begin{align}\label{eq:lemLpgrad1.7}
         \int_{\mathbb{R}^d} \as{\grad u}^{p-2} \dx =& \int_{\mathbb{R}^d} \as{\grad u}^{p(p-4)/(p-2)} \as{\grad u}^{4/(p-2)} \dx \nonumber \\
         \leq& \int_{\mathbb{R}^d} ( \as{\grad u}^{p} + \as{\grad u}^2 ) \dx .
     \end{align}
     Combining estimates \eqref{eq:lemLpgrad1.3}-\eqref{eq:lemLpgrad1.7} with estimate \eqref{eq:lemLpgrad1.1} leaves
     \begin{align}
         \frac{1}{p\,(p-1)} \frac{{\rm d}}{{\rm d}t} \int_{\mathbb{R}^d} \as{\grad u}^p \dx \leq& \left( C_1 + C_2(t) \right) \int_{\mathbb{R}^d} \as{\grad u}^p \dx + C_2(t) \norm{\grad u(t,\cdot)}_{L^2(\mathbb{R}^d)}^2.
     \end{align}
     Since $C_2(t) \in L^1(0,T)$, and since 
     $$
     \grad u \in L^\infty(0,T; L^2(\mathbb{R}^d)) \Rightarrow C_2(t) \norm{\grad u(t,\cdot)}_{L^2(\mathbb{R}^d)}^2 \in L^1(0,T),
     $$
     we obtain the estimate \eqref{est:graduLp} via Gr{\"o}nwall's lemma. The case when $p \in (2,4)$ is obtained via interpolation between the cases $p=2$ and $p=4$.
 \end{proof}

 \subsection{Uniqueness}

The higher regularity results from the previous subsection allow us to show the following uniqueness result.
 \begin{theorem}\label{thm:scalarunique}
     Suppose hypothesis \textup{\textbf{\ref{h4}}} holds. Then any nonnegative solution $u$ solving problem \eqref{eq:mainscalar} belonging to the class
     \begin{align*}
         u &\in L^\infty (Q_T) \cap L^\infty (0,T; L^1 ({\mathbb{R}^d})); \\
         \grad u &\in L^\infty (0,T; L^2 ({\mathbb{R}^d}));\\
         \Delta u &\in L^2 (Q_T)
     \end{align*}
     is unique.
 \end{theorem}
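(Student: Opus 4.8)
The plan is to establish uniqueness by an energy estimate on the difference of two solutions; the twist is that, $K$ being non‑differentiable, the bare $L^{2}$ estimate cannot be closed, so I will control the \emph{mixed local–nonlocal energy}
$$
\mathcal{E}(w):=\norm{w}_{L^{2}(\Rd)}^{2}+C_{0}\,\norm{K\ast w}_{L^{2}(\Rd)}^{2}
$$
for a constant $C_{0}>0$ to be fixed large (recall throughout that $K\in L^{1}\cap L^{\infty}$). Let $u,\bar u$ be two solutions in the stated class sharing the same initial datum, set $w:=u-\bar u$, and use the bilinear identity $u\,\grad K\ast u-\bar u\,\grad K\ast\bar u=w\,(\grad K\ast u)+\bar u\,(\grad K\ast w)$ to write
$$
\p_{t}w=D\,\Delta w+\grad\cdot\!\big(w\,(\grad K\ast u)+\bar u\,\grad(K\ast w)\big),\qquad w(0,\cdot)=0 .
$$
Testing this against $w+C_{0}\,\tilde K\ast K\ast w$ and using $\int_{\Rd}(\tilde K\ast K\ast w)\,w\,\diff x=\norm{K\ast w}_{L^{2}(\Rd)}^{2}$ (adjointness of convolution, and symmetry of $\tilde K\ast K$) produces $\tfrac12\tfrac{\diff}{\diff t}\mathcal{E}(w)$ on the left, together with the dissipation $D\norm{\grad w}_{L^{2}}^{2}+C_{0}D\norm{\grad(K\ast w)}_{L^{2}}^{2}$, the nonlocal part of the latter again by the bilinear identity and the commutation of $\grad$ with convolution. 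Equivalently, one runs two separate estimates — for $\norm{w}_{L^{2}}^{2}$ and for $\norm{K\ast w}_{L^{2}}^{2}$, the latter using that $K\ast w$ solves the heat equation with a convolved source — and adds them.

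The local terms are routine. Since $u$ is a strong solution, $\grad K\ast u=K\ast\grad u\in L^{\infty}(0,T;L^{\infty}(\Rd))$ (as $K\in L^{2}$ and $\grad u\in L^{\infty}(0,T;L^{2})$), so $-\int w\,(\grad K\ast u)\cdot\grad w\,\diff x\le\tfrac{D}{4}\norm{\grad w}_{L^{2}}^{2}+C\norm{w}_{L^{2}}^{2}$. The term $-\int\bar u\,\grad(K\ast w)\cdot\grad w\,\diff x$ is the crux: it cannot be absorbed into $D\norm{\grad w}_{L^{2}}^{2}$, since the only bound at hand, $\norm{\grad(K\ast w)}_{L^{2}}=\norm{K\ast\grad w}_{L^{2}}\le\norm{K}_{L^{1}}\norm{\grad w}_{L^{2}}$, reintroduces $\grad w$ with the $O(1)$ constant $\norm{\bar u}_{L^{\infty}}\norm{K}_{L^{1}}$, which is not small in general. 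Instead I keep $\norm{\grad(K\ast w)}_{L^{2}}$ and bound the term by $\tfrac{D}{4}\norm{\grad w}_{L^{2}}^{2}+D^{-1}\norm{\bar u}_{L^{\infty}}^{2}\norm{\grad(K\ast w)}_{L^{2}}^{2}$, deferring the second piece to be absorbed by the nonlocal dissipation $C_{0}D\norm{\grad(K\ast w)}_{L^{2}}^{2}$ — this is precisely why $C_{0}$ must be taken large, depending on $\norm{\bar u}_{L^{\infty}}$ and $D$.

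The nonlocal cross terms are where hypothesis \textup{\textbf{\ref{h4}}} is essential. With $\eta:=\tilde K\ast K$, differentiating $\norm{K\ast w}_{L^{2}}^{2}$ leaves, apart from $-2D\norm{\grad(K\ast w)}_{L^{2}}^{2}$, contributions of the form $\int\grad(\eta\ast w)\cdot w\,(\grad K\ast u)\,\diff x$ and $\int\grad(\eta\ast w)\cdot\bar u\,\grad(K\ast w)\,\diff x$. Hypothesis \textup{\textbf{\ref{h4}}} is exactly $\grad\eta\in L^{2}$, hence $\grad(\eta\ast w)=\grad\eta\ast w$ is bounded, with $\norm{\grad(\eta\ast w)}_{L^{\infty}}\le\norm{\grad\eta}_{L^{2}}\norm{w}_{L^{2}}$; distributing the second derivative, $\Delta(\eta\ast w)=\grad\eta\ast\grad w$ is bounded by $\norm{\grad\eta}_{L^{2}}\norm{\grad w}_{L^{2}}$, which is finite uniformly in $t$ because $\grad w\in L^{\infty}(0,T;L^{2})$. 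Combining these with the strong‑solution bounds — $u,\bar u\in L^{\infty}(Q_{T})$, $\grad u,\grad\bar u\in L^{\infty}(0,T;L^{2})$, $\Delta u,\Delta\bar u\in L^{2}(Q_{T})$ (so $\Delta(K\ast u),\Delta(K\ast\bar u)\in L^{2}(Q_{T})$) — and integrating by parts so as to put each derivative on a factor that can carry it, every cross term should be bounded by $\delta\big(\norm{\grad w}_{L^{2}}^{2}+C_{0}\norm{\grad(K\ast w)}_{L^{2}}^{2}\big)+C(t)\,\mathcal{E}(w)$ with $\delta$ as small as desired and $C(\cdot)\in L^{1}(0,T)$ (the time‑integrable factors coming from $\Delta u,\Delta\bar u\in L^{2}(Q_{T})$). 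I expect this step — in particular the term built from $\bar u\,\Delta(K\ast w)$, where a derivative must be traded between $K\ast w$ and $\eta\ast w$ — to be the main obstacle, as only here does one need the full strength of \textup{\textbf{\ref{h4}}} and of the strong regularity class, and the allocation of each factor among $L^{1}$, $L^{2}$ and $L^{\infty}$ is delicate.

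Putting the pieces together: first fix $C_{0}$ large enough that the $\norm{\grad(K\ast w)}_{L^{2}}^{2}$ term from the local estimate is dominated by the nonlocal dissipation, then pick $\delta$ small enough to absorb the remaining gradient contributions. What remains is $\tfrac{\diff}{\diff t}\mathcal{E}(w)\le C(t)\,\mathcal{E}(w)$ with $C(\cdot)\in L^{1}(0,T)$; since $\mathcal{E}(w)(0)=0$, Grönwall's lemma forces $\mathcal{E}(w)\equiv0$ on $(0,T)$, i.e. $u=\bar u$, and this holds for every $T>0$. The formal manipulations above are made rigorous by carrying them out on mollified‑in‑space (or regularized) approximations and passing to the limit, using the regularity guaranteed by the solution class.
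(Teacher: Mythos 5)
Your proposal is correct and follows essentially the same route as the paper: test the equation for $w=u-\bar u$ against both $w$ and $\tilde K\ast K\ast w$, observe that the local estimate alone leaves an irreducible $\norm{\nabla(K\ast w)}_{L^2}^2$ residual, absorb that residual into the nonlocal dissipation of the second estimate by choosing the weight in the mixed energy $\mathcal{E}(w)$ appropriately, and close with Gr\"onwall using hypothesis \textbf{\ref{h4}} to get $\norm{\nabla(\tilde K\ast K\ast w)}_{L^\infty}\leq\norm{\nabla(\tilde K\ast K)}_{L^2}\norm{w}_{L^2}$. The only superficial difference is in the first local cross-term, where the paper integrates by parts once more to expose $K\ast\Delta u\in L^2(0,T;L^\infty)$ while you invoke $\nabla K\ast u = K\ast\nabla u\in L^\infty(0,T;L^\infty)$ directly; and the paper rescales the local estimate by a small constant rather than weighting the nonlocal energy by a large one, which is the same manoeuvre.
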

 \begin{proof}
\normalfont     Assume there are two nonnegative solutions $u$ and $v$. We write
\begin{align}\label{eq:unique1.0}
(u-v)_t - D \Delta (u-v) = \grad \cdot ( (u-v) \grad K*u) + \grad \cdot ( v \grad K*(u-v) ) .
\end{align}
     Set $w = u-v$. Testing \eqref{eq:unique1.0} against $w$, integrating over ${\mathbb{R}^d}$ and integration by parts gives
     \begin{align}\label{eq:unique1}
         \frac{1}{2} \frac{{\rm d}}{{\rm d}t} \int_{\mathbb{R}^d} w^2 \diff x+ D \int_{\mathbb{R}^d} \magg{\grad w} \diff x&= - \int_{\mathbb{R}^d} w \grad (K*u) \grad w \diff x\nonumber \\
         &\quad - \int_{\mathbb{R}^d} v \grad (K*w) \grad w \diff x\nonumber \\
         = - \frac{1}{2} \int_{\mathbb{R}^d} \grad w^2 \grad (K*u) \diff x& - \int_{\mathbb{R}^d} v \grad (K*w) \grad w \diff x.
     \end{align}
     We estimate each term separately. For the first term, we integrate by parts and use that $\Delta u \in L^2 (0,T; L^2 ({\mathbb{R}^d}) ) \Rightarrow K* \Delta u \in L^2 (0,T; L^\infty({\mathbb{R}^d}))$ to obtain
     \begin{align*}
         - \frac{1}{2} \int_{\mathbb{R}^d} \grad w^2 \grad (K*u) \diff x= \frac{1}{2} \int_{\mathbb{R}^d} w^2 K * \Delta u \diff x\leq \frac{1}{2} \norm{K * \Delta u (t,\cdot)}_{L^\infty ({\mathbb{R}^d})} \int_{\mathbb{R}^d} w^2 \diff x.
     \end{align*}
     For the second term, we apply Cauchy's inequality with $\varepsilon = D \norm{v}_{L^\infty (Q_T)}^{-1}$ to obtain
     \begin{align*}
         - \int_{\mathbb{R}^d} v \grad (K*w) \grad w \diff x&\leq \frac{1}{2} \int_{\mathbb{R}^d} \left( D \magg{\grad w} + \frac{\norm{v}_{L^\infty (Q_T)}^2}{D}\magg{K* \grad w} \right) \diff x.
     \end{align*}
   Combining these estimates, \eqref{eq:unique1} becomes
     \begin{align}\label{est:uniqueness1}
         \frac{1}{2} \frac{{\rm d}}{{\rm d}t} \int_{\mathbb{R}^d} w^2 \diff x+ \frac{D}{2} \int_{\mathbb{R}^d} \magg{\grad w} \diff x&\leq C_1 (t) \int_{\mathbb{R}^d} w^2 \diff x+ C_2 \int_{\mathbb{R}^d} \magg{K*\grad w} \diff x,
     \end{align}
     where $C_1(t) = \norm{K * \Delta u (t,\cdot)}_{L^\infty ({\mathbb{R}^d})} \in L^1 (0,T)$ and $C_2 = D^{-1}\norm{v}_{L^\infty (Q_T)}^2 $.

     Next, we test \eqref{eq:unique1.0} against $\tilde K* K* w$, integrate over ${\mathbb{R}^d}$ and integrate by parts to find
     \begin{align}\label{eq:unique1.2}
         \frac{1}{2} \frac{{\rm d}}{{\rm d}t} \int_{\mathbb{R}^d} \magg{K*w} \diff x+ D \int_{\mathbb{R}^d} \magg{\grad K*w} \diff x&= - \int_{\mathbb{R}^d} w \grad (K*u) \grad ( \tilde K * K * w ) \diff x\nonumber \\
         &\quad - \int_{\mathbb{R}^d} v \grad (K*w) \grad (\tilde K*K*w) \diff x.
     \end{align}
     Since $\grad u \in L^\infty (0,T; L^2 ({\mathbb{R}^d}))$, there holds $\grad (K*u) \in L^\infty (0,T; L^2({\mathbb{R}^d}))$ so long as $K \in L^1 ({\mathbb{R}^d})$. Moreover,
     $$
\norm{\grad (\tilde K*K*w) (t,\cdot)}_{L^\infty ({\mathbb{R}^d})} \leq \norm{\grad (\tilde K*K)}_{L^2 ({\mathbb{R}^d})} \norm{w(t,\cdot)}_{L^2 ({\mathbb{R}^d})} = C_3 \norm{w(t,\cdot)}_{L^2 ({\mathbb{R}^d})},
     $$
     where $C_3 = \norm{\grad (\tilde K*K)}_{L^2 ({\mathbb{R}^d})} < \infty$ under hypothesis \textup{\textbf{\ref{h4}}}. Thus, the first term of \eqref{eq:unique1.2} can be estimated as
     \begin{align}
         &\as{\int_{\mathbb{R}^d} w \grad (K*u) \grad ( \tilde K * K * w ) \dx} \nonumber  \\
         &\quad\leq \norm{w(t,\cdot)}_{L^2 ({\mathbb{R}^d})} \norm{\grad (K*u)(t,\cdot)}_{L^2 ({\mathbb{R}^d})} \norm{\grad (\tilde K*K*w)(t,\cdot)}_{L^\infty({\mathbb{R}^d})}  \nonumber \\
         &\quad \leq C_3 \norm{\grad (K*u)}_{L^\infty(0,T; L^2 ({\mathbb{R}^d}) )} \norm{w(t,\cdot)}_{L^2 ({\mathbb{R}^d})}^2 .
     \end{align}
     For the second term of \eqref{eq:unique1.2}, we use that $v \in L^\infty (Q_T ) \cap L^\infty (0,T; L^1({\mathbb{R}^d})) \Rightarrow v \in L^\infty (0,T; L^2({\mathbb{R}^d}))$ and apply Cauchy's inequality:
     \begin{align}
         &\as{\int_{\mathbb{R}^d} v \grad (K*w) \grad (\tilde K*K*w) \dx} \nonumber \\
         &\quad\leq \norm{v (t,\cdot)}_{L^2 ({\mathbb{R}^d})} \norm{\grad K * w(t, \cdot)}_{L^2 ({\mathbb{R}^d})} \norm{\grad \tilde K*K*w(t, \cdot)}_{L^\infty({\mathbb{R}^d})} \nonumber \\
         &\quad\leq C_3 \norm{v}_{L^\infty (0,T;L^2({\mathbb{R}^d}))}\left( \varepsilon \norm{\grad K*w(t, \cdot)}_{L^2 ({\mathbb{R}^d})}^2 + \frac{1}{\varepsilon} \norm{w(t, \cdot)}_{L^2 ({\mathbb{R}^d})}^2  \right).
     \end{align}
     Choosing $\varepsilon = \frac{D}{2 C_3 \norm{v}_{L^\infty (0,T; L^2({\mathbb{R}^d}))} }$, we conclude that
     \begin{align}\label{est:uniqueness2}
         \frac{1}{2} \frac{{\rm d}}{{\rm d}t} \int_{\mathbb{R}^d} \magg{K*w} \diff x+ \frac{D}{2} \int_{\mathbb{R}^d} \magg{\grad K*w} \diff x&\leq C_4 \norm{w(t, \cdot)}_{L^2 ({\mathbb{R}^d})}^2,
     \end{align}
     where $C_4 < \infty$. 

     We now combine estimates \eqref{est:uniqueness1} and \eqref{est:uniqueness2}. To this end, we multiply estimate \eqref{est:uniqueness1} by $\frac{D}{4 C_2}$ and add the result to \eqref{est:uniqueness2}:
     \begin{align}
                   &\frac{1}{2} \frac{{\rm d}}{{\rm d}t} \left( \frac{D}{4 C_3} \int_{\mathbb{R}^d} w^2 \diff x+ \int_{\mathbb{R}^d} \magg{K*w} \diff x\right) + \frac{D^2}{8 C_3} \int_{\mathbb{R}^d} \magg{\grad w} \diff x+ \frac{D}{4} \int_{\mathbb{R}^d} \magg{\grad K*w} \diff x\nonumber \\
                   &\quad \leq ( C_1 (t) + C_4 ) \int_{\mathbb{R}^d} w^2 \diff x\leq ( C_1 (t) + C_4 ) \int_{\mathbb{R}^d} \left( w^2 + \magg{K * w} \right) \dx.
     \end{align}
     Gr{\"o}nwall's lemma implies that $w \equiv 0$ a.e. in $Q_T$, and uniqueness follows.
 \end{proof}

 \subsection{Existence of unique strong and classical solutions (proof of Theorems \ref{thm:scalarfinal}-\ref{thm:scalarclassicalsolution})}

 We are now ready to prove Theorems \ref{thm:scalarfinal}-\ref{thm:scalarclassicalsolution}.

\begin{proof}[Proof of Theorem \ref{thm:scalarfinal}]
    By Lemma \ref{lem:scalaresthigherorder}, the global weak solution obtained in either Theorem \ref{thm:existsmallmassscalar} (under the addition of hypothesis \textup{\textbf{\ref{h4}}}) or Theorem \ref{thm:existarbmassscalar} belongs to the class
         \begin{align*}
         u &\in L^\infty (Q_T) \cap L^\infty (0,T; L^1 ({\mathbb{R}^d})); \\
         \grad u &\in L^\infty (0,T; L^2 ({\mathbb{R}^d}));\\
         \Delta u &\in L^2 (Q_T).
     \end{align*}
     By Theorem \ref{thm:scalarunique} the solution is unique. Directly from \eqref{eq:mainscalar}, the spatial regularity of $u$ implies that $\frac{\p u}{\p t} \in L^2(Q_T)$. Consequently, $u \in C(0,T; L^2(\mathbb{R}^d))$ and so we conclude that $\norm{u(t,\cdot) - u_0(\cdot)}_{L^2(\mathbb{R}^d)} \to 0$ as $t \to 0^+$.
\end{proof}

\begin{proof}[Proof of Theorem \ref{thm:scalarclassicalsolution}]
    Working with the approximate solutions $u^\varepsilon$ as in \eqref{approximatescalar}, we expand the right hand side
    $$
\frac{\p u^\varepsilon}{\p t} - D \Delta u^\varepsilon = \grad u^\varepsilon \cdot \grad (K * \omega ^\varepsilon * u^\varepsilon) + u^\varepsilon \Delta (K * \omega^\varepsilon * u^\varepsilon) .
    $$
    By Lemma \ref{lem:scalaresthigherorder}, $\{ u^\varepsilon \}_{\varepsilon>0}$ is bounded in $L^\infty(Q_T)$; moreover, by Lemma \ref{lem:scalarLpgradient}, $\{ \grad u^\varepsilon \}_{\varepsilon>0}$ is bounded in $L^\infty(0,T; L^p (\mathbb{R}^d))$ for some $p > d+2$. Therefore, $\{ \grad u^\varepsilon \cdot \grad (K * \omega^\varepsilon * u^\varepsilon) \}_{\varepsilon>0}$ is bounded in $L^\infty(0,T; L^p (\mathbb{R}^d))$. Similarly, writing $\Delta (K * \omega ^\varepsilon * u^\varepsilon) = \grad (K* \omega^\varepsilon) * \grad u^\varepsilon$, Hypothesis \textbf{\ref{h2}} allows one to conclude that $\{ u^\varepsilon \grad (K* \omega^\varepsilon) * \grad u^\varepsilon \}_{\varepsilon>0}$ is also bounded in $L^\infty(0,T; L^p (\mathbb{R}^d))$. 

    Hence, $L^p$-estimates for parabolic equations (see, e.g., \cite[Theorem 6]{Wang2003}) imply that for any smooth, compact set $\Omega^\prime \subset \mathbb{R}^d$, $\{ u^\varepsilon \}_{\varepsilon>0}$ is bounded in $W^{1,2}_p ((0,T) \times \Omega^\prime)$, the space of functions twice weakly differentiable in space, once weakly differentiable in time, with derivatives belonging to $L^p((0,T) \times \Omega^\prime)$. Therefore, as $p > d+2$, the Sobolev embedding (see, e.g., \cite[Theorem 1.4.1]{wu2006elliptic}) implies that in fact $\{ u^\varepsilon\}_{\varepsilon>0}$ is bounded in $C^{\sigma, 1+\sigma} ([0,T] \times \overline{\Omega^\prime})$ for some $\sigma \in (0,1)$. Since $\Omega^\prime$ is arbitrary, we have that $u^\varepsilon \in C^{\sigma, 1+\sigma}_{loc} (Q_T)$ uniformly in $\varepsilon$. 

    We now set $w_i^\varepsilon := \tfrac{\p u^\varepsilon}{\p x_i}$ for each $i=1,\ldots,d$. Working with each component, we again expand the right hand side of the equation solved by $w_i^\varepsilon$:
  \begin{align}\label{thm:classical1.001}
\frac{\p w^\varepsilon_i}{\p t} - D \Delta w^\varepsilon_i =&\, \grad w^\varepsilon_i \cdot \grad (K * \omega ^\varepsilon * u^\varepsilon) + \grad u^\varepsilon \cdot \grad (K * \omega^\varepsilon *w_i ^\varepsilon) \nonumber \\ 
&\,+ w^\varepsilon_i \Delta (K * \omega^\varepsilon * u^\varepsilon) + u^\varepsilon \Delta (K * \omega^\varepsilon * w^\varepsilon_i).
\end{align}
Since $u^\varepsilon$ is bounded in $W^{1,2}_{p,loc} (Q_T)$, we have that $\grad w_i ^\varepsilon \in L^p_{loc}(Q_T)$ component-wise, for each $i=1,\ldots,d$. Moreover, by the classical differentiability obtained, $ w_i ^\varepsilon$ is locally uniformly bounded over $Q_T$ for each $i=1,\ldots,d$.
Therefore, the first and third terms on the right hand side of \eqref{thm:classical1.001} are uniformly bounded in $L^p_{loc}(Q_T)$. Furthermore, by hypotheses \textbf{\ref{h2}}, \textbf{\ref{h6}}, and the boundedness of $u^\varepsilon$, the second and fourth terms on the right hand side of \eqref{thm:classical1.001} are uniformly bounded in $L^p_{loc}(Q_T)$. To understand why we use the compact support of $K$, we look closely at the second term of \eqref{thm:classical1.001} in $L^p (B_R(0))$, the ball of radius $R>0$ centred at $0$. Since $\grad u^\varepsilon = (w_1^\varepsilon, \ldots, w_{d}^\varepsilon)$ is locally uniformly bounded, we need only control for $x \in B_R(0)$
\begin{align}
\as{(\grad K * \omega^\varepsilon) * w_i^\varepsilon (x) } \leq & \int_{\mathbb{R}^d} \as{\grad (K * \omega^\varepsilon)(y)} \as{w_i^\varepsilon(x+y)} {\rm d}y \nonumber \\
= & \int_{\supp { K } + B_1(0) } \as{\grad (K * \omega^\varepsilon)(y)} \as{w_i^\varepsilon(x+y)} {\rm d}y ,
\end{align}
and since $x \in B_R(0)$, we have $x+y \in B_{R+1}(0) + \supp{K}$. We may then use the local uniform boundedness of $w_i^\varepsilon$ to control $(\grad K * \omega^\varepsilon) * w_i^\varepsilon (t,x)$ in $L^p((0,T) \times B_R(0))$ for any $R>0$. The fourth term can be treated analogously.

Using once more $L^p$-estimates for parabolic equations and the Sobolev embedding, we conclude that $w_i ^\varepsilon = \tfrac{\p u^\varepsilon}{\p x_i} \in C^{\sigma, 1+\sigma}([0,T] \times \overline{\Omega^\prime})$ for some $\sigma \in (0,1)$, for any smooth, compact set $\Omega^\prime \subset \mathbb{R}^d$. Consequently, we conclude that $\{ u^\varepsilon \}_{\varepsilon>0}$ is bounded in $C^{\sigma, 2+\sigma}_{loc} (Q_T)$ for some $\sigma \in (0,1)$. In particular, $\grad u^\varepsilon \cdot \grad (K * \omega^\varepsilon * u^\varepsilon) + u^\varepsilon K * \omega^\varepsilon * \Delta u^\varepsilon \in C^{\sigma} _{loc} (Q_T)$ for some $\sigma \in (0,1)$.

Returning once more to the right hand side of the equation satisfied by $u^\varepsilon$, we have shown that $u_t ^\varepsilon - D \Delta u^\varepsilon \in C^{\sigma}_{loc} (Q_T)$. By Schauder estimates for parabolic equations (see, e.g., \cite[Theorem 5.1.8]{MR1329547}), $\{ u^\varepsilon \}_{\varepsilon>0}$ is bounded in $C^{1 + \sigma/2,2+\sigma}_{loc} (Q_T)$ for some $\sigma \in (0,1)$. Consequently, there exists a subsequence (still labelled by $\varepsilon$) such that $u^\varepsilon \to u \in C^{1+\sigma^\prime /2,2+\sigma^\prime} _{loc} (Q_T)$ as $\varepsilon\to0^+$ for any $\sigma^\prime < \sigma$, and $u$ is the classical solution to problem \eqref{eq:mainscalar}. By the assumed regularity of the initial data, we conclude that $u(t,x) \to u_0 (x)$ in $C(\mathbb{R}^d)$ as $t \to 0^+$.

We conclude by showing that the solution is strictly positive over $\mathbb{R}^d$ for all $t>0$. The approach is standard once we have the boundedness of $\Delta (K*u)$. Consider the auxiliary function $v := e^{\lambda t} u$ for $\lambda \in \mathbb{R}$ to be chosen later. Then $v$ is nonnegative, nonconstant, and satisfies
\begin{equation}\label{eq:strong_maximum_principle}
\frac{\p v}{\p t} - D \Delta v - \grad v \cdot \grad (K*u) - v [\Delta(K*u) + \lambda] = 0.
\end{equation}
By the smoothness of $u$ and Hypothesis \textbf{\ref{h6}}, we have that $K * \Delta u \in L^\infty((0,T) \times \Omega^\prime)$ for any $\Omega^\prime \subset \mathbb{R}^d$ compact, and any $T>0$ fixed. Suppose $\Omega^\prime$ is connected and that there exists a point $(t_0,x_0) \in (0,T) \times \Omega^\prime$ such that $v(t_0,x_0) = 0$. Choosing $\lambda$ large enough so that $\lambda - \as{K * \Delta u} \geq 0$ over $(0,T) \times \Omega^\prime$, we find that $v_t - D \Delta v - \grad (K*u) \cdot \grad v \geq 0$ over $(0,T) \times \Omega^\prime$. By the strong maximum principle, we conclude that $v$ is constant over $(0,t_0)\times\Omega^\prime$, a contradiction. 

Since $\Omega^\prime$, $T$ were arbitrary, we conclude that $v$ cannot attain a minimum (of zero) in any compact subset of $\mathbb{R}^d$ and must be positive everywhere in $(0,T) \times \mathbb{R}^d$.
\end{proof}

\section{The General \texorpdfstring{$n$}{}-Species System}\label{sec:nspeciescase}
In this section we establish the following theorems generalizing the scalar cases to the $n$-species system.

\begin{theorem}[Existence of global weak solution for $n$-species system with small mass]\label{thm:existsmallmasssystem}
    Assume \textup{\textbf{\ref{h1}}} holds and that $c_{i,n} > 0$ for each $i=1,\ldots,n$, where $c_{i,n}$ is defined as
    \begin{align}\label{const:ci}
    c_{i,n} := D_i - \frac{1}{2} \sum_{j=1}^n (m_i \norm{K_{ij}}_{L^\infty({\mathbb{R}^d})} + m_j \norm{K_{ji}}_{L^\infty({\mathbb{R}^d})} )
\end{align}
Then, there exists a global weak solution $\mathbf{u} \geq 0$ in $Q_T$ solving problem \eqref{maineq} in the sense of Definition~\ref{def:weaksoln}. Moreover, for all $i=1,\ldots,n$, if ${u}_{i0} \in L^p(\Rd)$, then $u_i \in L^\infty(0,T; L^p({\mathbb{R}^d}))$ for any $1 \leq p < \infty$, and if ${u}_{i0} \in L^2(\Rd)$, then $\grad u_i \in L^2(0,T; L^2({\mathbb{R}^d}))$. 
\end{theorem}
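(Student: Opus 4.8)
The plan is to reproduce the proof of Theorem~\ref{thm:existsmallmassscalar} almost verbatim; the only genuinely new ingredient is the $n$-species analogue of the a priori estimates of Lemma~\ref{lem:iscalar}. So I would first establish a lemma asserting that, under \textup{\textbf{\ref{h1}}} and the assumption $c_{i,n}>0$ for all $i$, every smooth positive solution $\mathbf u=(u_1,\dots,u_n)$ of \eqref{maineq} on $(0,T)$ satisfies uniform bounds on $\sup_t I[u_i(t)]$, $\sup_t\int_{\Rd} u_i\as{\log u_i}\,\dx$, $\norm{\grad\sqrt{u_i}}_{L^2(Q_T)}$, $\norm{\grad u_i}_{L^2(0,T;L^1(\Rd))}$ and $\norm{\sqrt{u_i}\,(K_{ij}*\grad u_j)}_{L^2(Q_T)}$, with constants depending only on $D_i,T,c_{i,n}^{-1},\norm{K_{ij}}_{L^\infty},d,m_i,I[u_{i0}],H[u_{i0}]$.

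To prove that lemma I would differentiate the total entropy $\sum_{i=1}^n H[u_i(t)]$ along solutions, integrate by parts and use conservation of mass to get
\begin{equation*}
\frac{\mathrm d}{\mathrm dt}\sum_{i}H[u_i(t)]+\sum_i D_i\int_{\Rd}\frac{\as{\grad u_i}^2}{u_i}\,\dx=-\sum_{i,j}\int_{\Rd}\grad u_i\cdot\grad(K_{ij}*u_j)\,\dx .
\end{equation*}
Each cross term is controlled exactly as in \eqref{est:graduKscalar}--\eqref{est:graduscalar}: Young's convolution inequality gives $\norm{K_{ij}*\grad u_j}_{L^\infty}\le\norm{K_{ij}}_{L^\infty}\norm{\grad u_j}_{L^1}$, and $\norm{\grad u_k}_{L^1}^2\le m_k\int_{\Rd}\as{\grad u_k}^2/u_k\,\dx$ by Cauchy--Schwarz; symmetrising the double sum $\sum_{i,j}\norm{K_{ij}}_{L^\infty}\norm{\grad u_i}_{L^1}\norm{\grad u_j}_{L^1}$ and using Young's inequality for products then lets one absorb the right-hand side, leaving $\frac{\mathrm d}{\mathrm dt}\sum_i H[u_i]+4\sum_i c_{i,n}\norm{\grad\sqrt{u_i}}_{L^2(\Rd)}^2\le 0$, which is useful precisely because each $c_{i,n}>0$. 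The negative part of $u_i\log u_i$ is then bounded through the second moments $I[u_i]$, whose evolution is estimated via Young's inequality with $\varepsilon$ as in \eqref{est:scalarprime1.3}, and Grönwall's lemma closes all the bounds. With the $\grad\sqrt{u_i}$ and $\sqrt{u_i}(K_{ij}*\grad u_j)$ estimates in hand, the proof of Lemma~\ref{lem:iscalar2} applies verbatim (test the $i$-th equation against $u_i^{p-1}$, use Cauchy's inequality with $\varepsilon$ and the $L^2(0,T;L^\infty)$ bound on $K_{ij}*\grad u_j$, then Grönwall), giving $u_i\in L^\infty(0,T;L^p(\Rd))$ for all $p<\infty$ and $\grad u_i\in L^2(Q_T)$.

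The existence proof is then the same compactness scheme as for Theorems~\ref{thm:existsmallmassscalar}--\ref{thm:existarbmassscalar}. I would regularise the kernels by $K_{ij}^\varepsilon:=K_{ij}*\omega^\varepsilon$, note that $\norm{K_{ij}^\varepsilon}_{L^\infty}\le\norm{K_{ij}}_{L^\infty}$ so the conditions $c_{i,n}>0$ persist for the regularised system, use the classical well-posedness for smooth kernels (the argument of \cite[Theorem~2.2]{Carrillo2020LongTime} extends to the coupled regularised system) to obtain smooth positive $u_i^\varepsilon$, apply the uniform estimates above, and bound $\partial_t u_i^\varepsilon$ in $L^2(0,T;W^{-1,4/3}(\Rd))$ via \eqref{est:may82024-1}. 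Aubin--Lions then yields a subsequence with $u_i^\varepsilon\to u_i$ strongly in $L^1_{loc}(Q_T)$, upgraded to $L^1(Q_T)$ by the tightness coming from the uniform second-moment bound, hence $\sqrt{u_i^\varepsilon}\to\sqrt{u_i}$ in $L^2(Q_T)$. The linear terms pass to the limit immediately; for the nonlinearity one checks $\sqrt{u_i^\varepsilon}\,(K_{ij}^\varepsilon*\grad u_j^\varepsilon)\rightharpoonup\sqrt{u_i}\,(K_{ij}*\grad u_j)$ exactly as in the scalar proof, combining the strong $L^2$ convergence of $\sqrt{u_i^\varepsilon}$ with the weak-$*$ convergence of $K_{ij}^\varepsilon*\grad u_j^\varepsilon$ in $L^2(0,T;L^\infty(\Rd))$ and the $L^2(Q_T)$ bound on $\grad u_j^\varepsilon$ needed to identify $\grad(K_{ij}*u_j)=K_{ij}*\grad u_j$.

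I expect the only delicate point to be the bookkeeping in the entropy estimate: unlike the scalar case the cross terms couple all species, so one must symmetrise carefully so that the coefficient of $\int_{\Rd}\as{\grad u_i}^2/u_i\,\dx$ collapses to exactly $D_i-c_{i,n}$; the rest is a routine adaptation of Section~\ref{sec:scalarcase}, and no new compactness tool is needed because Definition~\ref{def:weaksoln} is already phrased for systems.
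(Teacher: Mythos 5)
Your proposal is essentially the paper's own proof: the paper's Lemma~\ref{lem:isystem} carries out exactly the entropy--second-moment argument you sketch (differentiate $\sum_i H[u_i]$, bound each cross term via Young's convolution inequality and $\|\nabla u_k\|_{L^1}^2\le m_k\int|\nabla u_k|^2/u_k$, symmetrize the double sum so the coefficient of $\|\nabla\sqrt{u_i}\|_{L^2}^2$ collapses to $4c_{i,n}$, then close with the second moment and Gr{\"o}nwall), Lemma~\ref{lem:system2} applies the $L^p$ argument of Lemma~\ref{lem:iscalar2} component-by-component, and the existence is obtained by the same mollified-kernel plus Aubin--Lions compactness scheme. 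The only subtlety you correctly flag -- the bookkeeping needed for the symmetrization to reproduce the stated $c_{i,n}$ -- is handled in \eqref{est:graduK}--\eqref{est:dwdt2} of the paper in exactly the way you propose.
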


It is not difficult to see that when $n=1$, $c_{1,1}>0$ is equivalent to condition \eqref{const:c1} for the single species case. 

\begin{theorem}[Existence of global weak solution for $n$-species system with arbitrary mass]\label{thm:existarbmasssystem}
     Assume \textup{\textbf{\ref{h1}}}-\textup{\textbf{\ref{h3}}} and \textup{\textbf{\ref{h5}}} hold. Then for any initial mass there exists a global weak solution $\mathbf{u} \geq 0$ in $Q_T$ solving problem \eqref{maineq} in the sense of Definition \eqref{def:weaksoln}. Moreover, for all $i=1,\ldots,n$, if ${u}_{i0} \in L^p(\Rd)$, then $u_i \in L^\infty(0,T; L^p({\mathbb{R}^d}))$ for any $1 \leq p < \infty$, and if ${u}_{i0} \in L^2(\Rd)$, then $\grad u_i \in L^2(0,T; L^2({\mathbb{R}^d}))$.
\end{theorem}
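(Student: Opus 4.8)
The plan is to follow the strategy of the scalar Theorem~\ref{thm:existarbmassscalar}, replacing the single free energy by the weighted functional adapted to the detailed balance condition, and then repeating the compactness scheme of the proof of Theorems~\ref{thm:existsmallmassscalar}--\ref{thm:existarbmassscalar} componentwise. One departs from the regularized system (the $n$-species analogue of \eqref{approximatescalar}, with each $K_{ij}$ replaced by $K_{ij}*\omega^\varepsilon$), whose smooth, positive, global solutions $\mathbf{u}^\varepsilon$ exist by standard parabolic theory. Using the weights $\pi_i>0$ from \textbf{\ref{h5}} (so $\pi_i K_{ij}=\pi_j K_{ji}$), introduce
$$
\mathcal{F}[\mathbf{u}] := \sum_{i=1}^n \pi_i D_i \int_{\Rd} u_i \log u_i \diff x + \tfrac12 \sum_{i,j=1}^n \pi_i \int_{\Rd} u_i\, (K_{ij} * u_j)\diff x,
$$
which, thanks to \textbf{\ref{h3}} and \textbf{\ref{h5}}, is symmetric in the interaction part and decreases along regularized solutions with dissipation $\sum_i \pi_i \|f_i\|_{L^2(\Rd)}^2$, where $f_i := \sqrt{u_i}\,\grad\big(D_i \log u_i + \sum_{j} K_{ij}*u_j\big)$. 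This is the exact analogue of the gradient-flow identity of Lemma~\ref{lem:iscalararbmass}, and it is precisely here that \textbf{\ref{h5}} (rather than merely $K_{ij}=K_{ji}$) is used, so that the double sum collapses after integration by parts.

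Next I would establish the system version of Lemma~\ref{lem:iscalararbmass}. Integrating the dissipation identity in time, using Young's convolution inequality and conservation of each mass $m_i$, bounds $\sum_i \pi_i D_i H[u_i(t)] + \sum_i \pi_i \|f_i\|_{L^2(Q_t)}^2$ uniformly. The negative part of $u_i\log u_i$ is controlled, as in \eqref{eq:arbitrarymass2.12prime}, by the second moment $I[u_i(t)]$, whose growth is estimated by $\tfrac{d}{dt} I[u_i] = -2\int_{\Rd} \sqrt{u_i}\, f_i \cdot x \diff x \le \varepsilon^{-1} I[u_i] + \varepsilon\|f_i\|_{L^2(\Rd)}^2$; Grönwall then closes the bound on $\sup_t\big(\int_{\Rd} u_i |\log u_i|\diff x + I[u_i]\big)$. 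The genuinely new point is the control of $\grad u_i$ in $L^2(0,T;L^1(\Rd))$: writing $D_i \grad u_i = \sqrt{u_i}\, f_i - u_i \sum_j \grad K_{ij}*u_j$, the first term is bounded by $\sqrt{m_i}\|f_i\|_{L^2(Q_T)}$, and each term $u_i\, \grad K_{ij}*u_j$ is split over $\{u_i \le \ell\}$ and $\{u_i > \ell\}$ exactly as in \eqref{eq:arbitrarymass2.2}: on the first set it is bounded, via \textbf{\ref{h2}}, by $C_{K_{ij}}\,\ell\, \|u_j\|_{L^2(0,T;L^1)}$; on the second, using $u_i \le u_i|\log u_i|/\log\ell$ and Young's convolution inequality, by $\tfrac{1}{\log\ell}\|u_i|\log u_i|\|_{L^\infty(0,T;L^1)}\|K_{ij}\|_{L^\infty}\|\grad u_j\|_{L^2(0,T;L^1)}$. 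Summing over $i$ and $j$ and choosing $\ell$ large absorbs the coupled gradient terms, yielding a uniform bound on $\sum_i \|\grad u_i\|_{L^2(0,T;L^1)}$, hence on $\sqrt{u_i}\,\grad K_{ij}*u_j$ in $L^2(Q_T)$ and on $\grad\sqrt{u_i}$ in $L^2(Q_T)$. The improved estimates of Lemma~\ref{lem:iscalar2} use only these two quantities and go through essentially verbatim, the cross terms $u_i^{p-1}\grad u_i \cdot K_{ij}*\grad u_j$ being absorbed by the same Cauchy--Schwarz argument, giving $u_i \in L^\infty(0,T;L^p(\Rd))$ for all $p<\infty$ and $\grad u_i \in L^2(Q_T)$, uniformly in $\varepsilon$.

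Finally, for existence I would reproduce the compactness argument of the proof of Theorems~\ref{thm:existsmallmassscalar}--\ref{thm:existarbmassscalar}: the above bounds give $\tfrac{\p u_i^\varepsilon}{\p t}$ bounded in $L^2(0,T;W^{-1,q}(\Rd))$ with $q=4/3$, so Aubin--Lions yields $u_i^\varepsilon \to u_i$ strongly in $L^1_{loc}(Q_T)$, and globally by tightness from the uniform second-moment bound, whence $\sqrt{u_i^\varepsilon}\to \sqrt{u_i}$ in $L^2(Q_T)$ by Vitali. Combining this with the weak-$*$ convergence of $\grad K_{ij}*\omega^\varepsilon*u_j^\varepsilon$ in $L^2(0,T;L^\infty(\Rd))$ and the $L^2(Q_T)$ bound on $\grad u_j^\varepsilon$ (to identify the limit as $\grad K_{ij}*u_j = K_{ij}*\grad u_j$) identifies the nonlinear limit $\sqrt{u_i^\varepsilon}\,\grad K_{ij}*\omega^\varepsilon*u_j^\varepsilon \rightharpoonup \sqrt{u_i}\,\grad K_{ij}*u_j$ weakly in $L^2(Q_T)$, so one may pass to the limit in \eqref{defn:weaksolution}; the initial data is attained in $W^{-1,q}(\Rd)$ as before, and $T>0$ being arbitrary the solution is global. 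I expect the main obstacle to be bookkeeping rather than conceptual: threading the detailed-balance weights $\pi_i$ consistently through the dissipation identity so that all interaction terms combine with the correct sign, and running the fully coupled splitting argument for $\sum_i \|\grad u_i\|_{L^2(0,T;L^1)}$ so that a single $\ell$ works for all $i$ simultaneously. Crucially, unlike the small-mass Theorem~\ref{thm:existsmallmasssystem}, this requires no smallness, relying entirely on the gradient-flow structure furnished by \textbf{\ref{h3}} and \textbf{\ref{h5}}.
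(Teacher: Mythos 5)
Your proposal matches the paper's argument essentially line for line: the weighted free energy ${\mathbfcal{F}}[\mathbf{u}]$ with the detailed-balance weights $\pi_i$, the dissipation identity $\frac{{\rm d}}{{\rm d}t}{\mathbfcal{F}}[\mathbf{u}(t)]+\sum_i\pi_i\|f_i\|_{L^2(\Rd)}^2=0$, control of the negative part of $u_i\log u_i$ via the second moment, the splitting of $u_i\,\grad K_{ij}*u_j$ over $\{u_i\le\ell\}$ and $\{u_i>\ell\}$ and summation over $i,j$ to absorb the coupled gradient terms, the improved $L^p$ and $L^2$-gradient estimates, and the Aubin--Lions compactness and limit identification from the scalar case. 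This is precisely the paper's Lemma \ref{lem:isystemarbmass}, Lemma \ref{lem:system2}, and the proof of Theorems \ref{thm:existsmallmasssystem}--\ref{thm:existarbmasssystem}, so the plan is correct and not a different route.
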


\begin{theorem}[Existence of unique strong solution for $n$-species system]\label{thm:systemfinal}
Assume each component of the initial data satisfies $u_{i0}\in L^{\infty}(\Rd)$, $\grad u_{i0} \in L^2(\mathbb{R}^d)$ and $K_{ij}*u_{i0} \in W^{2,p}(\Rd)$ for some $p>2$, for each $i,j=1,\ldots,n$. Under the assumptions of Theorem \ref{thm:existarbmasssystem}, or under the assumptions of Theorem \ref{thm:existsmallmasssystem} together with \textup{\textbf{\ref{h4}}}, the obtained global weak solution is the unique, global strong solution solving problem \eqref{maineq} in the sense of Definition \ref{def:strongsolution}.
\end{theorem}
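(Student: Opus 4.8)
\textbf{Proof proposal for Theorem \ref{thm:systemfinal}.}
The plan is to mirror, component by component, the scalar roadmap of Section \ref{sec:scalarcase}. Starting from the global weak solution $\mathbf u=(u_1,\dots,u_n)$ produced by Theorem \ref{thm:existarbmasssystem} (or by Theorem \ref{thm:existsmallmasssystem} when \textbf{\ref{h4}} is also assumed), which in particular satisfies $u_i\in L^\infty(0,T;L^p(\mathbb R^d))$ for every $p<\infty$, $\nabla u_i\in L^2(Q_T)$ and $\sqrt{u_i}\,(K_{ij}\ast\nabla u_j)\in L^2(Q_T)$, I would first upgrade its regularity to the class of Definition \ref{def:strongsolution} — the $n$-species analogue of Lemma \ref{lem:scalaresthigherorder} — and then prove uniqueness in that class — the analogue of Theorem \ref{thm:scalarunique}. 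Once both are available, Theorem \ref{thm:systemfinal} follows exactly as Theorem \ref{thm:scalarfinal} did: the improved regularity lets one take $p=\infty$ in the $W^{-1,q}$ bound for $\partial_t u_i$, so that $u_i\in L^2(0,T;H^1)$, $\partial_t u_i\in L^2(0,T;H^{-1})$, hence $u_i\in C(0,T;L^2)$ and the initial datum is attained in $L^2(\mathbb R^d)$.

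For the higher-order estimates I would work with the regularized solutions (as in the scalar proof), fix a pair $(i,j)$, multiply the $j$-th equation of \eqref{maineq} by $\tilde K_{ij}\ast K_{ij}\ast\Delta u_j$, integrate, and expand $\nabla\cdot(u_j\,V_j)=\nabla u_j\cdot V_j+u_j\sum_k\Delta(K_{jk}\ast u_k)$ with $V_j:=\sum_k\nabla(K_{jk}\ast u_k)$; using the convolution adjoint exactly as in \eqref{est:iscalar3.1}–\eqref{est:iscalar3.3} one reaches, for each $(i,j)$,
\[
\tfrac12\tfrac{d}{dt}\|\nabla(K_{ij}\ast u_j)\|_{L^2}^2+\tfrac{D_j}{2}\|\Delta(K_{ij}\ast u_j)\|_{L^2}^2\le C\,\|K_{ij}\|_{L^2}^2\|\nabla u_j\|_{L^2}^2\,\|V_j\|_{L^2}^2+C\,\|u_j\|_{L^2}^2\Big(\textstyle\sum_k\|\nabla(\tilde K_{ij}\ast K_{jk})\|_{L^2}\|\nabla u_k\|_{L^2}\Big)^2 ,
\]
where the first term uses $\tilde K_{ij}\ast\Delta(K_{jk}\ast u_k)=\nabla(\tilde K_{ij}\ast K_{jk})\ast\nabla u_k$. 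This identity requires $\nabla(\tilde K_{ij}\ast K_{jk})\in L^2(\mathbb R^d)$ for \emph{all} triples $i,j,k$: under \textbf{\ref{h1}}–\textbf{\ref{h2}} this follows from the same computation that shows \textbf{\ref{h4}} is implied by \textbf{\ref{h1}}–\textbf{\ref{h2}} (now with $\tilde K_{ij}\ast\nabla K_{jk}$), while under \textbf{\ref{h1}} and \textbf{\ref{h4}} it follows from \eqref{eq:simpler_version_of_H4} and Cauchy–Schwarz in Fourier ($|\xi|^2|\widehat{K_{ij}}|^2|\widehat{K_{jk}}|^2\le|\xi|^2|\widehat{K_{ij}}|^4/2+|\xi|^2|\widehat{K_{jk}}|^4/2$). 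Summing over all $(i,j)$ and setting $E(t):=\sum_{i,j}\|\nabla(K_{ij}\ast u_j)\|_{L^2}^2$, the closure is the bound $\|V_j\|_{L^2}^2\le n\sum_k\|\nabla(K_{jk}\ast u_k)\|_{L^2}^2\le n\,E(t)$, so that every coefficient multiplying $E(t)$ lies in $L^1(0,T)$ by the already available bounds $u_j\in L^\infty(0,T;L^2)$, $\nabla u_j\in L^2(Q_T)$; Grönwall gives $E\in L^\infty(0,T)$ and $\Delta(K_{ij}\ast u_j)\in L^2(Q_T)$. One then upgrades to $\Delta(K_{ij}\ast u_j)\in L^2(0,T;L^p)$ by maximal regularity of the heat equation (\cite[Theorem 3.1]{HieberPruss1997}), since $K_{ij}\ast u_j$ solves $\partial_t(\cdot)-D_j\Delta(\cdot)=K_{ij}\ast g_j$ with $g_j:=\nabla u_j\cdot V_j+u_j\sum_k\Delta(K_{jk}\ast u_k)\in L^2(0,T;L^1)$ — the $L^2$-in-time integrability using precisely $\nabla(K_{jk}\ast u_k)\in L^\infty(0,T;L^2)$ and $\Delta(K_{jk}\ast u_k)\in L^2(Q_T)$ just obtained — and $K_{ij}\ast u_{j0}\in W^{2,p}$. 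Testing the $i$-th equation against $\Delta u_i$ and using these bounds yields $\nabla u_i\in L^\infty(0,T;L^2)$ and $\Delta u_i\in L^2(Q_T)$ (here $\nabla u_{i0}\in L^2$ is used), whence $V_i\in L^\infty(Q_T)$, and finally an Alikakos/Moser iteration identical to the end of the proof of Lemma \ref{lem:scalaresthigherorder} — the coupling entering only through $V_i\in L^\infty(Q_T)$ — gives $u_i\in L^\infty(Q_T)$.

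For uniqueness, given two solutions $\mathbf u,\mathbf v$ in the strong class I would set $w_i:=u_i-v_i$, use the splitting $u_i\nabla(K_{ij}\ast u_j)-v_i\nabla(K_{ij}\ast v_j)=w_i\nabla(K_{ij}\ast u_j)+v_i\nabla(K_{ij}\ast w_j)$, so $\partial_t w_i-D_i\Delta w_i=\nabla\cdot\big(w_i\sum_j\nabla(K_{ij}\ast u_j)\big)+\nabla\cdot\big(v_i\sum_j\nabla(K_{ij}\ast w_j)\big)$, and run two families of energy estimates as in Theorem \ref{thm:scalarunique}. Testing against $w_i$ (the term $-\tfrac12\int\nabla(w_i^2)\cdot\sum_j\nabla(K_{ij}\ast u_j)=\tfrac12\int w_i^2\sum_j\Delta(K_{ij}\ast u_j)$ being licit since $K_{ij}\ast\Delta u_j\in L^2(0,T;L^\infty)$ for strong solutions) gives, for each $i$, $\tfrac12\tfrac{d}{dt}\|w_i\|^2+\tfrac{D_i}{2}\|\nabla w_i\|^2\le C_1^{(i)}(t)\|w_i\|^2+C_2^{(i)}\sum_j\|\nabla(K_{ij}\ast w_j)\|^2$ with $C_1^{(i)}\in L^1(0,T)$; testing the $j$-th equation against $\tilde K_{ij}\ast K_{ij}\ast w_j$ and bounding the $v_j$-contribution via $\|\nabla(\tilde K_{ij}\ast K_{ij}\ast w_j)\|_{L^\infty}\le\|\nabla(\tilde K_{ij}\ast K_{ij})\|_{L^2}\|w_j\|_{L^2}$ (valid under \textbf{\ref{h4}}) and Young's inequality gives, for each $(i,j)$, $\tfrac12\tfrac{d}{dt}\|K_{ij}\ast w_j\|^2+D_j\|\nabla(K_{ij}\ast w_j)\|^2\le\tilde C_{ij}(t)\|w_j\|^2+\beta\sum_k\|\nabla(K_{jk}\ast w_k)\|^2$ with $\tilde C_{ij}\in L^\infty(0,T)$. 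Forming $\mathcal E(w):=\sum_i\alpha_i\|w_i\|^2+\sum_{i,j}\|K_{ij}\ast w_j\|^2$ and choosing the weights $\alpha_i>0$ and the Young parameter $\beta>0$ small enough that, for every $(p,q)$, the total coefficient of $\|\nabla(K_{pq}\ast w_q)\|^2$ on the right does not exceed half of the dissipation coefficient $D_q$ on the left, all gradient terms are absorbed and one is left with $\tfrac{d}{dt}\mathcal E(w)\le C(t)\,\mathcal E(w)$ with $C\in L^1(0,T)$; since $\mathcal E(w)(0)=0$, Grönwall forces $w_i\equiv0$ for all $i$.

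The main obstacle is the bookkeeping forced by the coupling. In both the higher-order estimates and the uniqueness argument one must control the velocity fields $V_j=\sum_k\nabla(K_{jk}\ast u_k)$, which mix all species; the device that makes everything close is to estimate these against the \emph{sum} over $k$ of the quantities $\|\nabla(K_{jk}\ast u_k)\|_{L^2}$ (resp. $\|\nabla(K_{jk}\ast w_k)\|_{L^2}$) — the very quantities already being tracked — together with the (perhaps not obvious) fact that the mixed convolutions $\nabla(\tilde K_{ij}\ast K_{jk})$ are automatically in $L^2(\mathbb R^d)$ for all triples under either set of hypotheses. Checking that the finitely many Grönwall inequalities can be merged with a single admissible choice of weights — i.e. that the absorption of the $\|\nabla(\cdot)\|^2$ terms into the dissipation does not become circular — is routine but is exactly the point that would fail if the diffusivities $D_i$ were allowed to degenerate; here $D_i>0$ uniformly, so the combination always closes.
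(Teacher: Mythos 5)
Your overall strategy mirrors the paper's own route: prove an $n$-species analogue of Lemma~\ref{lem:scalaresthigherorder} for the higher-order estimates (this is the paper's Lemma~\ref{lem:systemesthigherorder}), an $n$-species analogue of Theorem~\ref{thm:scalarunique} for uniqueness (this is Theorem~\ref{thm:uniquesystem}), and then conclude as in the scalar Theorem~\ref{thm:scalarfinal} that $u_i\in C(0,T;L^2)$ so the initial datum is attained in $L^2$. Your uniqueness argument, with the two families of energy estimates and the weighted combined energy $\mathcal E(w)$, is sound and essentially reproduces the paper's proof.

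However, the displayed differential inequality in your higher-order step is not correct as written, and the detour it forces you into (the mixed convolutions) addresses a problem that does not occur. When you test the $j$-th equation against $\tilde K_{ij}*K_{ij}*\Delta u_j$, the adjoint $\int\phi\,(\tilde K*\psi)\,\diff x=\int(K*\phi)\,\psi\,\diff x$ only ever moves $\tilde K_{ij}$ onto the entire product, never onto the factor $\Delta(K_{jk}*u_k)$ alone. The cross-term produced by the expansion of $\nabla\cdot(u_jV_j)$ is
\[
\sum_{k}\int_{\Rd}u_j\,\Delta(K_{jk}*u_k)\,\bigl(\tilde K_{ij}*\Delta(K_{ij}*u_j)\bigr)\diff x,
\]
in which $\tilde K_{ij}$ stays with $\Delta(K_{ij}*u_j)$; one then bounds the \emph{diagonal} quantity $\tilde K_{ij}*\Delta(K_{ij}*u_j)=\nabla(\tilde K_{ij}*K_{ij})*\nabla u_j$ in $L^\infty$ by Young, which is exactly hypothesis \textbf{\ref{h4}}. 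The mixed convolutions $\nabla(\tilde K_{ij}*K_{jk})$ never appear, so your Fourier/Cauchy--Schwarz argument for them is unnecessary. More importantly, the factor $\|\Delta(K_{jk}*u_k)\|_{L^2}$ cannot be eliminated from the right-hand side as your inequality suggests: after H\"older one has $\|u_j\|_{L^2}\,\|\Delta(K_{jk}*u_k)\|_{L^2}\,\|\tilde K_{ij}*\Delta(K_{ij}*u_j)\|_{L^\infty}$, and the $\|\Delta(K_{jk}*u_k)\|_{L^2}^2$ contribution has to be absorbed into the dissipation by Cauchy's inequality. This absorption only closes after you sum the inequality over \emph{all} pairs $(i,j)$ and reindex so that the full family $\{\|\Delta(K_{pq}*u_q)\|_{L^2}^2\}$ sits on the left with coefficients of order $D_q$; this is precisely what the paper's Lemma~\ref{lem:systemesthigherorder} does (it picks Cauchy weights $D_l/(4n)$ so the absorbed sum is a fixed fraction of the total dissipation). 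Your version, which retains only the $E(t)$ closure and silently drops the $\|\Delta(K_{jk}*u_k)\|^2$ term, would not close if the estimates were carried out carefully. The rest of the chain — maximal regularity, testing the $i$-th equation against $\Delta u_i$, the Alikakos iteration, and the attainment of the initial datum — is fine.
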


\begin{theorem}[Existence of unique classical solution for $n$-species system]\label{thm:systemfinal2}
Assume that each component of the initial data satisfies $u_{i0} \in C^3(\mathbb{R}^d)\cap L^\infty(\mathbb{R}^d)$ as well as $\nabla u_{i0} \in L^2(\mathbb{R}^d) \cap L^p(\Rd)$ for some $p>d+2$, and suppose one of the following:
    \begin{enumerate}
        \item in addition to the hypotheses of Theorem \ref{thm:existsmallmasssystem}, \textbf{\textup{\ref{h2}}} and \textbf{\textup{\ref{h6}}} hold;
        \item in addition to the hypotheses of Theorem \ref{thm:existarbmasssystem}, \textbf{\textup{\ref{h6}}} holds;
    \end{enumerate}
Then, the obtained unique global strong solution is the unique, global classical solution solving system \eqref{maineq} in the sense of Definition \ref{def:classicalsolution}. Moreover, each component $u_i$ is strictly positive over $\mathbb{R}^d$ for all $t>0$.
\end{theorem}

As in the single species case, we first treat the small mass case under hypothesis \textup{\textbf{\ref{h1}}}, and then treat the arbitrary mass case under the additional hypotheses \textup{\textbf{\ref{h2}}}-\textup{\textbf{\ref{h3}}} and \textup{\textbf{\ref{h5}}}. For strong solutions in the small mass case, the addition of hypotheses \textup{\textbf{\ref{h4}}is required as in the scalar case.} As much of the technical arguments are the same as in the scalar case, we present the key details when a careful manipulation of the indices of $K_{ij}$ is required. 

\subsection{Apriori estimates}

As in the scalar case, we first establish some a priori estimates under the assumption that we have a positive smooth solution $\mathbf{u} := (u_1, \ldots, u_n)$ solving problem \eqref{maineq}. To this end, fix $n \geq 2$ and define the following $n$-population counterparts to the entropy, total interaction energy, and free energy of the scalar system. 

We introduce the entropy (sometimes referred to as the weighted \textit{Shannon entropy} \cite{jungel2022nonlocal}):
$$
{\bf H} [ \mathbf{u}(t)] := \sum_{i=1}^n  D_i \pi_i H [ u_i(t) ].
$$
Then the total interaction energy is given by
$$
{\mathbfcal {K}}[\mathbf{u}(t)] := \frac12 \sum_{i,j=1}^n \int_{\mathbb{R}^d} \pi_i u_i (K_{ij}*u_j) \, \diff x,
$$
and so the free energy for a system in detailed balance (hypothesis \textup{\textbf{\ref{h5}}}) is given as
$$
{\mathbfcal{F}}[\mathbf{u}(t)] := \sum_{i=1}^n \int_{\mathbb{R}^d} D_i \pi_i  u_i \log u_i \, \diff x+\frac12 \sum_{i,j=1}^n \int_{\mathbb{R}^d} \pi_i u_i (K_{ij}*u_j) \, \diff x= {\bf H} [\mathbf{u}(t)] + {\mathbfcal{K}} [\mathbf{u}(t)] 
$$
In the small mass regime, we use the following auxiliary functional, the sum of the individual entropies of the species:
$$
{\bf H}_S [ \mathbf{u}(t)] := \sum_{i=1}^n  H [ u_i(t) ].
$$

We first prove the following, an $n$-species analogue to Lemma \ref{lem:iscalar}.
\begin{lemma}[A priori estimates under \textup{\textbf{\ref{h1}}} and small-mass condition]\label{lem:isystem}
    Fix $T>0$. Assume \textup{\textbf{\ref{h1}}} and suppose $c_{i,n} >0$ for each $i=1,\ldots,n$ where $c_{i,n}$ is as defined in the statement of Theorem \ref{thm:existsmallmasssystem}. Then for any smooth, positive solution $\mathbf{u}$ to problem \eqref{maineq} there holds
    \begin{align}
    \sup_{t \in (0,T)} I[u_i(t)] &\leq C; \label{est:isystem1.001} \\
        \norm{\grad \sqrt{u_i}}_{L^2(Q_T)}^2 &\leq C ;\label{est:isystem1.1} \\
        \norm{\sqrt{u_i} \grad K_{ij} * u_j}_{L^2 (Q_T)} &\leq C, \label{est:isystem1.2}
    \end{align}
    for each $i,j=1,\ldots,n$, where $C = C( D_i, T, c_{i,n}^{-1}, \norm{K_{ij}}_{L^\infty(\mathbb{R}^d)}, d, m_i, I[u_{i0}], H[u_{i0}]) \text{ for all } i,j=1,\ldots,n)$.
\end{lemma}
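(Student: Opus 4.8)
The plan is to mirror the scalar argument from Lemma~\ref{lem:iscalar}, replacing the single entropy $H[u]$ with the auxiliary additive functional ${\bf H}_S[\mathbf{u}(t)] = \sum_{i=1}^n H[u_i(t)]$, and tracking the interaction terms index-by-index. First I would differentiate ${\bf H}_S[\mathbf{u}(t)]$ along a smooth positive solution of \eqref{maineq}; integrating by parts and using conservation of each mass $m_i$, the diffusion contributes $-\sum_i D_i \int_{\Rd} |\grad u_i|^2/u_i$, while the interaction part produces cross terms $-\sum_{i,j} \int_{\Rd} \grad u_i \cdot \grad(K_{ij}\ast u_j)$. Each of these is estimated exactly as in \eqref{est:graduKscalar}--\eqref{est:graduscalar}: by H\"older and Young's convolution inequality, $|\int \grad u_i \cdot \grad(K_{ij}\ast u_j)| \le \|K_{ij}\|_{L^\infty}\|\grad u_i\|_{L^1}\|\grad u_j\|_{L^1}$, and then $\|\grad u_i\|_{L^1}^2 \le m_i \int |\grad u_i|^2/u_i = 4 m_i \|\grad\sqrt{u_i}\|_{L^2}^2$. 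Applying Young's inequality $ab \le \tfrac12(a^2+b^2)$ to split the mixed product $\|\grad u_i\|_{L^1}\|\grad u_j\|_{L^1}$ symmetrically, the total interaction contribution is bounded by $\sum_i \big[\tfrac12\sum_j(m_i\|K_{ij}\|_{L^\infty}+m_j\|K_{ji}\|_{L^\infty})\big]\,4\|\grad\sqrt{u_i}\|_{L^2}^2$, so that the bracket is precisely $D_i - c_{i,n}$ with $c_{i,n}$ as in \eqref{const:ci}. Hence
\[
\frac{{\rm d}}{{\rm d}t}{\bf H}_S[\mathbf{u}(t)] + 4\sum_{i=1}^n c_{i,n}\,\|\grad\sqrt{u_i}(t,\cdot)\|_{L^2(\Rd)}^2 \le 0,
\]
and integrating in time gives ${\bf H}_S[\mathbf{u}(t)] + 4\sum_i c_{i,n}\|\grad\sqrt{u_i}\|_{L^2(Q_t)}^2 \le {\bf H}_S[\mathbf{u}_0]$, the analogue of \eqref{est:scalarprime1.1}.

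Next I would control the negative part of each $u_i\log u_i$ by the second moment $I[u_i(t)]$, using the same splitting over $\{u_i \le e^{-|x|^2/2}\}$ and $\{e^{-|x|^2/2}\le u_i\le 1\}$ as in \eqref{eq:arbitrarymass2.12prime}, so that $-\int \chi_{\{u_i\le1\}} u_i\log u_i \le C_1 + I[u_i(t)]$. To close the second-moment bound I would compute $\frac{{\rm d}}{{\rm d}t}I[u_i(t)] = -2\int_{\Rd} x\cdot(D_i\grad u_i + u_i\sum_j K_{ij}\ast\grad u_j)\diff x$, apply Young's inequality with $\varepsilon$, and bound $\|\sqrt{u_i}\,\grad K_{ij}\ast u_j\|_{L^2}^2 \le m_i\|K_{ij}\|_{L^\infty}^2\|\grad u_j\|_{L^1}^2 \le 4 m_i m_j \|K_{ij}\|_{L^\infty}^2\|\grad\sqrt{u_j}\|_{L^2}^2$, exactly as in \eqref{est:iscalar9.11}. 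Summing over $i$, integrating, choosing $\varepsilon$ small enough that the $\sum_j\|\grad\sqrt{u_j}\|_{L^2(Q_t)}^2$ coefficient is absorbed by the $\sum_i c_{i,n}\|\grad\sqrt{u_i}\|_{L^2(Q_t)}^2$ term (here the positivity $c_{i,n}>0$ is essential), and then Gr\"onwall's lemma applied to $\sum_i I[u_i(t)]$ yields \eqref{est:isystem1.001}. Feeding this back in gives \eqref{est:isystem1.1}, and then \eqref{est:isystem1.2} follows from the $\|\sqrt{u_i}\grad K_{ij}\ast u_j\|_{L^2}^2 \le 4 m_i m_j\|K_{ij}\|_{L^\infty}^2\|\grad\sqrt{u_j}\|_{L^2}^2$ bound integrated in time.

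The one genuinely new bookkeeping point, and the step I expect to require the most care, is the symmetrization of the cross terms: in the scalar case the single product $\|\grad u\|_{L^1}^2$ pairs naturally with $D - m\|K\|_{L^\infty}$, but for the system the term $\int \grad u_i\cdot\grad(K_{ij}\ast u_j)$ mixes species $i$ and $j$, and one must distribute it via $\|\grad u_i\|_{L^1}\|\grad u_j\|_{L^1} \le \tfrac{m_j\|K_{ij}\|_{L^\infty}}{2}\cdot\tfrac{\|\grad u_i\|_{L^1}^2}{m_i} \cdot \tfrac{m_i}{m_j\|K_{ij}\|_{L^\infty}}\!\cdots$ — more simply, bound $\|K_{ij}\|_{L^\infty}\|\grad u_i\|_{L^1}\|\grad u_j\|_{L^1}$ by $\tfrac12\|K_{ij}\|_{L^\infty}(m_j\cdot\tfrac{\|\grad u_i\|_{L^1}^2}{m_i}\cdot\tfrac{m_i}{m_j} + \cdots)$; the clean way is to write it as $\tfrac{1}{2}\big(\tfrac{m_i\|K_{ij}\|_{L^\infty}}{1}\tfrac{\|\grad u_j\|_{L^1}^2}{m_j} + \tfrac{m_j\|K_{ij}\|_{L^\infty}}{1}\tfrac{\|\grad u_i\|_{L^1}^2}{m_i}\big)$ is not quite right dimensionally, so I would instead simply use $ab\le\tfrac12(a^2+b^2)$ directly on $\sqrt{m_j}\|K_{ij}\|_{L^\infty}^{1/2}\cdot\tfrac{\|\grad u_i\|_{L^1}}{\sqrt{m_i}}$-type factors, collect the coefficient of $\|\grad\sqrt{u_i}\|_{L^2}^2$ after summing over all $(i,j)$, and verify it equals $D_i - c_{i,n}$. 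Everything else is a routine transcription of the scalar proof with sums over species inserted, and no use of \textbf{\ref{h2}}, \textbf{\ref{h3}}, or detailed balance is needed here.
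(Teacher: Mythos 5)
Your plan is the same as the paper's: differentiate ${\bf H}_S[\mathbf{u}] = \sum_i H[u_i]$, integrate by parts, estimate each cross term $\int \grad u_i\cdot\grad(K_{ij}*u_j)$ via H\"older, Young's convolution inequality and the mass-weighted bound $\|\grad u_i\|_{L^1}\le 2\sqrt{m_i}\,\|\grad\sqrt{u_i}\|_{L^2}$, split symmetrically, reindex the double sum to isolate $\sum_i c_{i,n}\|\grad\sqrt{u_i}\|_{L^2}^2$, and then close via the second-moment control of $\int u_i|\log u_i|$ and Gr\"onwall. This is exactly the paper's argument.

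Two small remarks on the bookkeeping step you flagged as the most delicate. First, the ``clean'' weighted Young inequality you dismissed as dimensionally off is in fact fine: with $a=\|\grad u_i\|_{L^1}/\sqrt{m_i}$ and $b=\|\grad u_j\|_{L^1}/\sqrt{m_j}$ it reduces to $2ab\le a^2+b^2$. Second, when you sum over $(i,j)$ and collect the coefficient of $\|\grad\sqrt{u_i}\|_{L^2}^2$, the split used in the paper's \eqref{est:graduK} produces $\tfrac{m_i}{2}\sum_j(\|K_{ij}\|_{L^\infty}+\|K_{ji}\|_{L^\infty})$, which is what appears in the paper's \eqref{est:dwdt2}; this agrees with the literal expression $\tfrac12\sum_j(m_i\|K_{ij}\|_{L^\infty}+m_j\|K_{ji}\|_{L^\infty})$ from \eqref{const:ci} only when all the $m_i$ coincide. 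So when you ``verify it equals $D_i-c_{i,n}$,'' you will actually land on the former expression; the mismatch is a notational slip in the paper's definition of $c_{i,n}$, not a defect of your argument, and either version of the smallness hypothesis is what gets absorbed by the $\varepsilon$-choice before applying Gr\"onwall.
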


We also prove the following, an $n$-species analogue to Lemma \ref{lem:iscalararbmass}.

\begin{lemma}[Apriori estimates under \textup{\textbf{\ref{h1}}}-\textup{\textbf{\ref{h3}}}, \textup{\textbf{\ref{h5}}}]\label{lem:isystemarbmass}
 Fix $T>0$. Assume \textup{\textbf{\ref{h1}}}-\textup{\textbf{\ref{h3}}} and \textup{\textbf{\ref{h5}}}. Then for any smooth, positive solution $\mathbf{u}$ solving problem \eqref{maineq} there holds
    \begin{align}
       \sup_{t\in(0,T)}\int_{\mathbb{R}^d} u_i \as{x}^2 \diff x\leq C; \label{est:arbmasssys1.01} \\
       \sup_{t \in (0,T)} \int_{\mathbb{R}^d} u_i \as{\log u_i} \diff x+ \frac{1}{2} \norm{f_i}_{L^2(Q_T)}^2 \leq C; \label{est:arbmasssys1.02} \\
       \norm{\grad u_i}_{L^2(0,T; L^1(\mathbb{R}^d))} \leq C; \label{est:arbmasssys1.3.0} \\
               \norm{\sqrt{u_i} \grad K_{ij} * u_j}_{L^2(Q_T)} \leq C;\label{est:arbmass1.03} \\
        \norm{\grad \sqrt{u_i}}_{L^2(Q_T)} \leq C \label{est:arbmass1.04},
    \end{align}
    where $f_i = \sqrt{u_i} ( D_i \log u_i + \sum_{j=1}^n K_{ij} * u_j )$ for each $i=1,\ldots,n$ and 
    $
    C = C( D_i, T, \norm{K_{ij}}_{L^\infty(\mathbb{R}^d)},$ $ \pi_i, d, m_i, I[u_{i0}], H[u_{i0}])  \text{ for all } i,j=1,\ldots,n)$.
\end{lemma}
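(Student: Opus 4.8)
The plan is to follow the proof of Lemma~\ref{lem:iscalararbmass} almost line by line, with the scalar free energy replaced by the weighted functional ${\mathbfcal{F}}$, and with the detailed balance condition \textbf{\ref{h5}} together with the symmetry \textbf{\ref{h3}} playing the role that mere symmetry played when $n=1$. The conceptually decisive first step is to verify that \eqref{maineq} is the gradient flow of ${\mathbfcal{F}}$. Writing each equation as $\partial_t u_i = \grad\cdot(\sqrt{u_i}\,f_i)$ with the dissipation $f_i := \sqrt{u_i}\,\grad\bigl(D_i\log u_i + \sum_{j=1}^n K_{ij}*u_j\bigr)$ from the statement of the Lemma, and differentiating ${\mathbfcal{F}}[\mathbf{u}(t)]$ in time, the functional derivative of ${\mathbfcal{K}}$ with respect to $u_i$ produces two contributions, $\tfrac12\pi_i\sum_j K_{ij}*u_j$ and $\tfrac12\sum_j\pi_j\,\tilde K_{ji}*u_j$; by \textbf{\ref{h3}} the reflected kernel satisfies $\tilde K_{ji}=K_{ji}$, and by \textbf{\ref{h5}}, $\pi_j K_{ji}=\pi_i K_{ij}$, so after relabelling the two halves coincide and equal $\pi_i\sum_j K_{ij}*u_j$. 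Combined with $\tfrac{\delta{\bf H}}{\delta u_i}=D_i\pi_i(\log u_i+1)$ and the conservation of mass (which annihilates the constant), this yields the dissipation identity
\begin{align*}
\frac{{\rm d}}{{\rm d}t}{\mathbfcal{F}}[\mathbf{u}(t)] + \sum_{i=1}^n \pi_i \int_{\mathbb{R}^d} \magg{f_i}\,\diff x = 0 .
\end{align*}
Integrating in time and bounding the interaction energy below by ${\mathbfcal{K}}[\mathbf{u}(t)]\geq -\tfrac12\sum_{i,j}\pi_i\norm{K_{ij}}_{L^\infty(\mathbb{R}^d)}m_i m_j$ via Young's convolution inequality and $\norm{u_i}_{L^\infty(0,T;L^1(\mathbb{R}^d))}=m_i$, one obtains a uniform bound on $\sum_i D_i\pi_i H[u_i(t)] + \sum_i\pi_i\norm{f_i}_{L^2(Q_t)}^2$.

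With this in hand, I control the negative part of each $u_i\log u_i$ by $C_1+I[u_i(t)]$ exactly as in \eqref{eq:arbitrarymass2.12prime}, and estimate the second moments through the dissipation via $\frac{{\rm d}}{{\rm d}t}I[u_i(t)] = -2\int_{\mathbb{R}^d}\sqrt{u_i}\,f_i\cdot x\,\diff x \leq \varepsilon^{-1}I[u_i(t)]+\varepsilon\norm{f_i(t)}_{L^2(\mathbb{R}^d)}^2$. Summing over $i$, using the free energy bound to replace $\sum_i\pi_i\norm{f_i}_{L^2(Q_t)}^2$ by a constant plus a multiple of $\sum_i I[u_i(t)]$, choosing $\varepsilon$ small (depending on the $D_i$ and $\pi_i$) to absorb that term, and applying Gr\"onwall's lemma gives \eqref{est:arbmasssys1.01}. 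The bound \eqref{est:arbmasssys1.02} then follows from the splitting \eqref{eq:splitting_ulogu}: each individual entropy $H[u_i(t)]$ is now pinned below by $-C_1-I[u_i(t)]$ and above by the free energy bound together with these lower bounds on the remaining entropies, while $\norm{f_i}_{L^2(Q_T)}^2$ is likewise controlled by the free energy bound once $I[u_i]$ is known to be bounded.

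To prove \eqref{est:arbmasssys1.3.0} I write $D_i\grad u_i = \sqrt{u_i}\,f_i - u_i\sum_j\grad K_{ij}*u_j$, so that $\norm{\sqrt{u_i}\,f_i}_{L^2(0,T;L^1(\mathbb{R}^d))}\leq\sqrt{m_i}\,\norm{f_i}_{L^2(Q_T)}$ handles the first term, and each $u_i\,\grad K_{ij}*u_j$ is split over $\{u_i\leq\ell\}$ and $\{u_i>\ell\}$ with $\ell>1$ large: on the first set \textbf{\ref{h2}} gives $\norm{\grad K_{ij}*u_j}_{L^1(\mathbb{R}^d)}\leq C_{K_{ij}}m_j$, while on the second $u_i\leq u_i\as{\log u_i}/\log\ell$ and $\norm{\grad K_{ij}*u_j}_{L^\infty(\mathbb{R}^d)}\leq\norm{K_{ij}}_{L^\infty(\mathbb{R}^d)}\norm{\grad u_j}_{L^1(\mathbb{R}^d)}$, so this contribution is bounded by a constant times $(\log\ell)^{-1}\norm{\grad u_j}_{L^2(0,T;L^1(\mathbb{R}^d))}$. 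The new point relative to the scalar computation \eqref{eq:arbitrarymass2.223} is that these cross terms couple the quantities $\norm{\grad u_j}_{L^2(0,T;L^1(\mathbb{R}^d))}$ across species; summing the resulting inequalities over $i$ and choosing $\ell$ large (now also depending on $n$) lets one absorb $\sum_j\norm{\grad u_j}_{L^2(0,T;L^1(\mathbb{R}^d))}$ on the left, proving \eqref{est:arbmasssys1.3.0}. Estimate \eqref{est:arbmass1.03} is then immediate from $\norm{\sqrt{u_i}\,\grad K_{ij}*u_j}_{L^2(Q_T)}^2 \leq m_i\norm{K_{ij}}_{L^\infty(\mathbb{R}^d)}^2\norm{\grad u_j}_{L^2(0,T;L^1(\mathbb{R}^d))}^2$, and \eqref{est:arbmass1.04} follows from $2D_i\grad\sqrt{u_i} = f_i - \sqrt{u_i}\sum_j\grad K_{ij}*u_j$ together with the bounds on $f_i$ and on $\sqrt{u_i}\,\grad K_{ij}*u_j$.

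The main obstacle is the first step: checking that the index bookkeeping in the functional derivative of ${\mathbfcal{K}}$ genuinely closes, i.e.\ that \textbf{\ref{h3}} and \textbf{\ref{h5}} are precisely the two ingredients needed to collapse the interaction part of $\tfrac{{\rm d}}{{\rm d}t}{\mathbfcal{F}}$ into $-\sum_i\pi_i\norm{f_i}^2$; without this the entropy method has nothing to dissipate against. The only other, purely technical, difficulty is the coupled absorption over $\sum_j\norm{\grad u_j}_{L^2(0,T;L^1(\mathbb{R}^d))}$ replacing the single absorption step of the scalar proof.
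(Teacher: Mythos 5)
Your proposal is correct and follows the same route as the paper's proof: derive the free-energy dissipation identity (using \textbf{\ref{h3}} together with \textbf{\ref{h5}} to make the interaction-energy variation collapse onto $\pi_i\sum_j K_{ij}*u_j$), control the second moments via the dissipation $f_i$ and Gr\"onwall, and then split $u_i\,\grad K_{ij}*u_j$ over sub- and super-level sets of $u_i$ and absorb the coupled $\sum_j\norm{\grad u_j}_{L^2(0,T;L^1)}$ term by taking $\ell$ large depending on $n$. The only cosmetic difference is that you make the functional-derivative bookkeeping for $\mathbfcal{K}$ fully explicit and phrase the absorption of $\sum_i I[u_i(t)]$ slightly differently, whereas the paper encodes the detailed-balance cancellation as the adjoint identity $\sum_{i,j}\int\pi_i u_i\,K_{ij}*(u_j)_t\,\dx=\sum_{i,j}\int\pi_i(u_i)_t\,K_{ij}*u_j\,\dx$ and points back to the small-mass system computation for the Gr\"onwall step; both are the same argument.
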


\begin{proof}[Proof of Lemma \ref{lem:isystem}]
    We first consider the quantity ${\bf H}_S[ \mathbf{u}(t)]$. Taking the derivative of ${\bf H}_S[ \mathbf{u}(t)]$ with respect to time, using the conservation of mass, and integrating by parts yields
\begin{align}\label{est:dwdt}
    \frac{{\rm d}}{{\rm d}t} {\bf H}_S [\mathbf{u}(t)] &= - \sum_{i=1}^n \int_{\mathbb{R}^d} \frac{\grad u_i}{u_i} \left( D_i \grad u_i + u_i \sum_{j=1}^n \grad ( K_{ij} * u_j ) \right) \diff x\nonumber \\
    &= - 4 \sum_{i=1}^n D_i \norm{\grad \sqrt{u_i} (t,\cdot)}_{L^2(\mathbb{R}^d)}^2 - \sum_{i,j=1}^n \int_{\mathbb{R}^d} \grad u_i \grad (K_{ij} * u_j ) \diff x.
\end{align}
By using the same estimates of the scalar case, \eqref{est:graduKscalar} and \eqref{est:graduscalar}, we obtain
\begin{align}\label{est:graduK}
    \as{\int_{\mathbb{R}^d} \grad u_i \grad (K_{ij} * u_j) \dx} &\leq 4 \, \norm{K_{ij}}_{L^\infty (\mathbb{R}^d)}\, \sqrt{m_i} \, 
    \sqrt{m_j} \, \norm{\grad \sqrt{u_i} (t,\cdot)}_{L^2(\mathbb{R}^d)} \, \norm{\grad \sqrt{u_j} (t,\cdot)}_{L^2(\mathbb{R}^d)} \nonumber  \\  
    &\leq 2\norm{K_{ij}}_{L^\infty (\mathbb{R}^d)} \left( m_i  \norm{\grad \sqrt{u_i} (t,\cdot)}_{L^2(\mathbb{R}^d)} ^2 + m_j  \norm{\grad \sqrt{u_j} (t,\cdot)}_{L^2(\mathbb{R}^d)} ^2 \right)
\end{align}
Summing over $i,j = 1, \ldots, n$, and carefully reindexing sums, \eqref{est:dwdt} becomes
\begin{align}\label{est:dwdt2}
     \frac{{\rm d}}{{\rm d}t} {\bf H}_S [\mathbf{u}(t)] &\leq 
    - 4 \sum_{i=1}^n \left[ D_i - \frac{m_i}{2} \sum_{j=1}^n \left(\norm{K_{ij}}_{L^\infty (\mathbb{R}^d)} + \norm{K_{ji}}_{L^\infty (\mathbb{R}^d)} \right)  \right] \norm{\grad \sqrt {u_i} (t,\cdot)}_{L^2(\mathbb{R}^d)}^2 \nonumber \\
        &= - 4\sum_{i=1}^n c_{i,n} \norm{\grad \sqrt{u_i} (t,\cdot)}_{L^2 ({\mathbb{R}^d})}^2 .
\end{align}

Integrating both sides from $0$ to $t$ yields
\begin{align}\label{est:isystem1.01.0}
    {\bf H}_S[\mathbf{u}(t)] + 4 \sum_{i=1}^n c_{i,n} \norm{\grad \sqrt{u_i}}_{L^2 (Q_t)}^2 \leq {\bf H}_S [ \mathbf{u}_0] .
\end{align}
Similarly as in \eqref{est:iscalar9.11}, we obtain
\begin{align}\label{est:isystem1.01.1}
    \sum_{i,j=1}^n \norm{\sqrt{u_i} \, \grad (K_{ij} * u_j) (t,\cdot)}_{L^2(\mathbb{R}^d)} ^2 &\leq 4 \sum_{i,j=1}^n \norm{K_{ij}}_{L^\infty (\mathbb{R}^d)}^2 m_i \, m_j \norm{\grad \sqrt{u_j} (t,\cdot)}_{L^2(\mathbb{R}^d)}^2 \nonumber \\
    &\leq C_2 \sum_{i=1}^n \norm{\grad \sqrt{u_i} (t,\cdot)}_{L^2(\mathbb{R}^d)}^2,
\end{align}
where $C_2 = C_2 (\norm{K_{ij}}_{L^\infty(\mathbb{R}^d)}, m_i)$ for all $i,j=1,\ldots,n$.

As in the proof of Lemma \ref{lem:iscalar}, we can control the second moment of all species as
\begin{align}\label{est:isystem1.0003}
   \sum_{i=1}^n \frac{{\rm d}}{{\rm d}t} I[u_i(t)] \leq &\, \varepsilon^{-1} \sum_{i=1}^n I[u_i(t)] + 4 \varepsilon \sum_{i=1}^n D_i ^2 \norm{\grad \sqrt{u_i} (t,\cdot)}_{L^2(\mathbb{R}^d)}^2 \nonumber \\
   &\, + \varepsilon n \sum_{i,j=1}^n \norm{\sqrt{u_i} \grad (K_{ij} * u_j) (t,\cdot)}_{L^2(\mathbb{R}^d)}^2 \nonumber \\
   \leq &\, \varepsilon^{-1} I[u_i(t)] + \varepsilon ( 4 \overline{D}^2 + C_2 n ) \sum_{i=1}^n \norm{\grad \sqrt{u_i} (t,\cdot)}_{L^2(\mathbb{R}^d)}^2,
\end{align}
where $\overline{D} = \max_{i} D_i $. Thus, combining estimates \eqref{est:isystem1.01.0}-\eqref{est:isystem1.0003}, as done in \eqref{est:scalarprime1.7}-\eqref{est:scalarprime1.9} for the scalar case, we find that for $\varepsilon$ sufficiently small there holds
\begin{align}\label{est:isystem1.0005}
    \sum_{i=1}^n &\left( \int_{\mathbb{R}^d} u_i \as{\log u_i} \, \diff x+ 2 c_{i,n} \norm{\grad \sqrt{u_i} }_{L^2(Q_t)}^2 + I[u_i(t)] \right) \leq 2 n C_1 \nonumber \\
    &\, + {\bf H}_S[\mathbf{u}_0] + 3 \varepsilon^{-1}  \sum_{i=1}^n\int_0 ^t I[u_i(t)] {\rm d}s ) 
\end{align}
where the control of the negative part of $u_i \log u_i$ for each $i$ is done as in \eqref{eq:arbitrarymass2.12prime}. Gr{\"o}nwall's lemma yields estimate \eqref{est:isystem1.001}, from which estimates \eqref{est:isystem1.1}-\eqref{est:isystem1.2} follow.
\end{proof}

\begin{proof}[Proof of Lemma \ref{lem:isystemarbmass}]
    As in the scalar case, the main idea is to write the dissipation for the $i^{\text{th}}$ component:
    $$
f_i := \sqrt{u_i} \, \grad \left( D_i \log u_i + \sum_{j=1}^n K_{ij} * u_j \right).
    $$
    Then, $(u_i)_t = \grad \cdot (\sqrt{u_i} f_i)$ for each component $u_i$. Under the detailed balance condition hypothesis \textup{\textbf{\ref{h5}}} there holds 
        $$
\sum_{i,j=1}^n \int_{\mathbb{R}^d} \pi_i u_i K_{ij} * (u_j)_t \diff x= \sum_{i,j=1}^n \int_{\mathbb{R}^d} \pi_i (u_i)_t K_{ij} * u_j \dx
    $$
    and so direct computation gives
    \begin{align}\label{est:arbmasssys1.11}
            \frac{{\rm d} }{{\rm d}t} {\mathbfcal{F}}[\mathbf{u}(t)]  + \sum_{i=1}^n \int_{\mathbb{R}^d} \pi_i \as{f_i}^2 \diff x= 0\,.
    \end{align}
Integrating from $0$ to $t$, and using the conservation of mass for each component then yields
\begin{align}
   {\mathbfcal{F}}[\mathbf{u}(t)] + \sum_{i=1}^n \pi_i \norm{f_i}_{L^2(Q_t)}^2 \leq \sum_{i,j=1}^n \pi_i \norm{K_{ij}}_{L^\infty(\mathbb{R}^d)} m_i m_j .
\end{align}
As it has been the case, we now control the negative part of $u_i \log u_i$ in terms of the second moment. It is not difficult to see that
\begin{align}
    \sum_{i=1}^n I[u_i(t)] \leq \varepsilon^{-1} \int_0 ^t I[u_i(s)] {\rm d} s + \varepsilon \sum_{i=1}^n \norm{f_i}_{L^2(Q_t)}^2 + \sum_{i=1}^n I[u_{i0}] ,
\end{align}
where $\varepsilon$ is to be chosen sufficiently small. Using the same procedure as in \eqref{est:isystem1.0003}-\eqref{est:isystem1.0005} yields estimates \eqref{est:arbmasssys1.01}-\eqref{est:arbmasssys1.02} for each $i=1,\ldots,n$. 

Next, we write $D_i \grad u_i = \sqrt{u_i} f_i - u_i \sum_{j=1}^n \grad (K_{ij} * u_j)$ to see that
\begin{align}\label{est:isystem1.0007}
    \sum_{i=1}^n D_i \norm{\grad u_i}_{L^2(0,T; L^1(\mathbb{R}^d))} \leq \sum_{i=1}^n \sqrt{m_i} \norm{f_i}_{L^2(Q_T)} + \sum_{i,j=1}^n \norm{u_i \grad (K_{ij} * u_j )}_{L^2(0,T; L^1(\mathbb{R}^d))}.
\end{align}
Then, as in \eqref{eq:arbitrarymass2.2} we consider for each $i$ the sets $\underline{\Omega}_i = \{ (t,x) : u_i \leq \ell \}$ and $\overline{\Omega}_i = \{ (t,x) : u_i > \ell \}$ to conclude that for any $\ell>1$ there holds
\begin{align}\label{est:isystem1.0009}
    \norm{u_i \grad (K_{ij} * u_j)}_{L^2(0,T; L^1(\mathbb{R}^d))} \leq C_K \ell m_j \sqrt{T} + \frac{C_3}{\log(\ell)} \norm{\grad u_j}_{L^2(0,T; L^1(\mathbb{R}^d))},
\end{align}
where $C_K := \max_{i,j} C_{K_{ij}}$, the largest constant coming from hypothesis \textup{\textbf{\ref{h2}}} across all components $K_{ij}$, and $C_3$ depends only on $\max_{i,j} \norm{K_{ij}}_{L^\infty(\mathbb{R}^d)}$ and $\max_{i} \norm{u_i \as{\log u_i}}_{L^\infty(0,T; L^1(\mathbb{R}^d))}$. Summing \eqref{est:isystem1.0009} over $i,j=1,\ldots,n$, combining with estimate \eqref{est:isystem1.0007} and choosing $\ell$ sufficiently large allows one to conclude that
\begin{align}
    \sum_{i=1}^n \norm{\grad u_i}_{L^2(0,T; L^1(\mathbb{R}^d))} \leq C_4 \sum_{i=1}^n m_i \norm{f_i}_{L^2(Q_T)} + C_K n^2 \ell \sqrt{T}  \max_{i} m_i 
\end{align}
and so estimate \eqref{est:arbmasssys1.3.0} follows.

As in estimate \eqref{eq:arbitrarymass2.3} for the scalar case, estimate \eqref{est:arbmasssys1.3.0} yields estimate \eqref{est:arbmass1.03}. Estimate \eqref{est:arbmass1.04} then follows from the $L^2(Q_T)$ estimates on $f_i$ and $\sqrt{u_i} \grad K_{ij} * u_j$ for each $i,j=1,\ldots,n$.    
\end{proof}

\subsection{Improved estimates and existence of weak solutions}

As in the scalar case, we improve these estimates to $u_i \in L^\infty (0,T; L^p ({\mathbb{R}^d})$ for any $1 \leq p < \infty$, and $\grad u_i$ in $L^2 (0,T; L^2 ({\mathbb{R}^d}))$ for each $i=1,\ldots,n$. 

\begin{lemma}[Improved estimates with no further assumptions for $n$-species case]\label{lem:system2}
    Assume the conditions of Lemma \ref{lem:isystem} (Lemma \ref{lem:isystemarbmass}) are satisfied. Then, for all $i=1,\ldots,n$, if $u_{i0}\in L^{p}(\Rd)$ for some $p \in (1,\infty)$, then
     \begin{equation}\label{est:system3}
        \norm{u_i}_{L^\infty(0,T; L^p({\mathbb{R}^d}))} \leq e^{C \max_{i,j}\norm{K_{ij}}_{L^\infty(\mathbb{R}^d)}^2 p / {D_i}} \norm{u_{i0}}_{L^p ({\mathbb{R}^d})},
    \end{equation}
    where $C$ is given in Lemma \ref{lem:isystem} (Lemma \ref{lem:isystemarbmass}). Moreover, for all $i=1,\ldots,n$, if $u_{i0} \in L^2(\Rd)$ then with $\tilde C = \tilde C(C)$ there holds
    \begin{equation}\label{est:system4}
        \norm{\grad u_i}_{L^2 (Q_T)} \leq   D_i^{-1} \tilde C \norm{u_{i0}}_{L^2 ({\mathbb{R}^d})}.
    \end{equation}
\end{lemma}
\begin{proof}
The proof is identical to Lemma \ref{lem:iscalar2} applied to each component $u_i$. 
\end{proof}
\begin{proof}[Proof of Theorems \ref{thm:existsmallmasssystem}-\ref{thm:existarbmasssystem}]
    The proof follows the blueprint of the scalar case. First, one can regularize each equation of system \eqref{maineq} for $\varepsilon>0$:
    $$
\frac{\p u_i ^\varepsilon}{\p t} = D_i \Delta u_i ^\varepsilon + \grad \cdot ( u_i ^\varepsilon \grad K_{ij} * \omega^\varepsilon * u_i ^\varepsilon )
    $$
    where $\omega^\varepsilon$ is the standard mollifier. Existence of a unique classical solution follows from, e.g., \cite{Giunta2021local}, or by generalizing the arguments used in \cite[Theorem 2.2]{Carrillo2020LongTime} from the scalar case to the system case. As in the scalar case, if $c_{i,n} > 0$ is satisfied for each $i=1,\ldots,n$ where $c_{i,n}$ is defined in \eqref{const:ci}, then it is also satisfied by $K_{ij} * \omega^\varepsilon$ and we remain in the small mass regime.

    By either Lemma \ref{lem:isystem} or Lemma \ref{lem:isystemarbmass}, we have the following estimates for all $i,j=1,\ldots,n$:
    \begin{enumerate}
    \item[(A)]$\{ \grad \sqrt{u_i^\varepsilon} \}_{\varepsilon>0}$ is bounded in $L^2 (Q_T)$; 
    \item[(B)] $\{ \sqrt{u_i^\varepsilon} \grad K_{ij} * {\omega}^\varepsilon * u_j^\varepsilon \}_{\varepsilon>0}$ is bounded in $L^2 (Q_T)$; 
    \item[(C)] $\{\sup_{t\in(0,T)} \int_{\mathbb{R}^d} u^\varepsilon_i \as{x}^2 \diff x\}_{\varepsilon>0}$ is  bounded.
\end{enumerate}

Notice that the estimates we have are exactly the same as in the scalar case. Following identical steps, one can finalize the proof of Theorems \ref{thm:existsmallmasssystem}-\ref{thm:existarbmasssystem}. 
\end{proof}

\subsection{Higher order estimates and uniqueness}

We now obtain improved estimates as in Lemmas \ref{lem:scalaresthigherorder}-\ref{lem:scalarLpgradient} for the $n$-species case under the additional hypotheses \textup{\textbf{\ref{h4}}}.

\begin{lemma}[Higher order estimates for $n$-species]\label{lem:systemesthigherorder}
Assume that each component of the initial data satisfies $u_{i0}\in L^{\infty}(\Rd)$, $\grad u_{i0} \in L^2(\mathbb{R}^d)$ and $K*u_{i0} \in W^{2,p}(\mathbb{R}^d)$ for some $p>2$. Suppose hypothesis \textup{\textbf{\ref{h4}}} holds in addition to the conditions of Lemma \ref{lem:isystem} (Lemma \ref{lem:isystemarbmass}). Then, there holds 
\begin{align}
 \norm{\grad u_i }_{L^\infty (0,T; L^2 ({\mathbb{R}^d}))} &\leq \hat C ; \label{est:system5} \\
    \norm{\Delta u_i}_{L^2 (Q_T)} &\leq \hat C; \label{est:system6} \\
    \norm{u_i }_{L^\infty (Q_T)} &\leq \hat C,\label{est:systemunif}
\end{align}
for each $i=1,\ldots,n$, where $\hat C = \hat C( D_i, T, C, \tilde C,\max_{i,j} \norm{\grad (\tilde K_{ji} * K_{ji})}_{L^2(\mathbb{R}^d)})$ and $C$, $\tilde C$ are given in Lemmas \ref{lem:isystem} and \ref{lem:system2} (Lemmas \ref{lem:isystemarbmass} and \ref{lem:system2}).
 \end{lemma}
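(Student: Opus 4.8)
I would follow the scalar argument of Lemma \ref{lem:scalaresthigherorder} step by step, the only genuinely new difficulty being a careful bookkeeping of the kernel indices. As there, the chain of implications is: improved bounds on the regularised velocities $K_{ij}*\grad u_j$ and $K_{ij}*\Delta u_j$ $\Rightarrow$ an $L^2(0,T;L^p)$ bound on $\Delta(K_{ij}*u_j)$ $\Rightarrow$ an energy estimate for $\grad u_i,\Delta u_i$ $\Rightarrow$ an $L^\infty$ bound on $u_i$ via an Alikakos iteration. Throughout one uses the already-established bounds of Lemma \ref{lem:system2}, namely $u_i\in L^\infty(0,T;L^q(\Rd))$ for every finite $q$ and $\grad u_i\in L^2(Q_T)$, together with $\grad(\tilde K_{ij}*K_{ij})\in L^2(\Rd)$ from \textbf{\ref{h4}}.

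\textbf{Step 1: estimates on $K_{ij}*\grad u_j$ and $K_{ij}*\Delta u_j$.} For each pair $(i,j)$ I multiply the $j$-th equation of \eqref{maineq} by $\tilde K_{ij}*K_{ij}*\Delta u_j$ and integrate by parts as in \eqref{est:iscalar3.1}. The diffusion term gives the dissipation $D_j\norm{\Delta(K_{ij}*u_j)(t,\cdot)}_{L^2(\Rd)}^2$; expanding $\grad\cdot(u_j\grad(K_{jk}*u_k))=\grad u_j\cdot\grad(K_{jk}*u_k)+u_j\,\Delta(K_{jk}*u_k)$ and moving the outer convolution $K_{ij}$ onto the other factor, the interaction term splits, for each $k$, into $\int_{\Rd}\Delta(K_{ij}*u_j)\,K_{ij}*\bigl(\grad u_j\cdot\grad(K_{jk}*u_k)\bigr)\diff x$ and $\int_{\Rd}u_j\,\Delta(K_{jk}*u_k)\,\bigl(\grad(\tilde K_{ij}*K_{ij})*\grad u_j\bigr)\diff x$. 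The key point is that the only kernel combination that ends up \emph{differentiated} is the matched one $\tilde K_{ij}*K_{ij}$, so that only \textbf{\ref{h4}} is needed — this is exactly what keeps Step 1 valid in the small-mass setting, where \textbf{\ref{h2}} is unavailable. Cauchy's inequality with a small parameter absorbs $\norm{\Delta(K_{ij}*u_j)}_{L^2}^2$ from the first piece into the dissipation, leaving the coefficient $\norm{K_{ij}}_{L^2}^2\norm{\grad u_j(t,\cdot)}_{L^2}^2\in L^1(0,T)$ multiplying $\norm{\grad(K_{jk}*u_k)}_{L^2}^2$; in the second piece one likewise absorbs the $\norm{\Delta(K_{jk}*u_k)}_{L^2}^2$-part into the dissipation and bounds the remainder by $\norm{u_j}_{L^\infty(0,T;L^2)}^2\norm{\grad(\tilde K_{ij}*K_{ij})}_{L^2}^2\norm{\grad u_j(t,\cdot)}_{L^2}^2\in L^1(0,T)$. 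Summing over all $i,j$ and setting $\Phi(t):=\sum_{i,j}\norm{\grad(K_{ij}*u_j)(t,\cdot)}_{L^2}^2$, $\Psi(t):=\sum_{i,j}\norm{\Delta(K_{ij}*u_j)(t,\cdot)}_{L^2}^2$, I obtain $\frac{\diff}{\diff t}\Phi+c\,\Psi\le a^2(t)\,\Phi+b^2(t)$ with $a^2,b^2\in L^1(0,T)$ and $c>0$; Gr\"onwall's lemma gives $\grad(K_{ij}*u_j)\in L^\infty(0,T;L^2(\Rd))$, and integrating then gives $\Delta(K_{ij}*u_j)\in L^2(Q_T)$ for all $i,j$.

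\textbf{Steps 2--3: bootstrap and the energy estimate.} Since $K_{ij}*u_j$ solves the heat equation with source $K_{ij}*g_j$, $g_j:=\sum_k\bigl(\grad u_j\cdot\grad(K_{jk}*u_k)+u_j\,\Delta(K_{jk}*u_k)\bigr)\in L^2(0,T;L^1(\Rd))$ by Step 1 and Lemma \ref{lem:system2}, Young's inequality and \textbf{\ref{h1}} give $K_{ij}*g_j\in L^2(0,T;L^q(\Rd))$ for every $q$, so maximal regularity of the heat equation (cf. \cite[Theorem 3.1]{HieberPruss1997}) upgrades $\Delta(K_{ij}*u_j)$ to $L^2(0,T;L^p(\Rd))$ with $p$ as in the hypothesis on $K_{ij}*u_{j0}$. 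Then, for each fixed $i$, multiplying the $i$-th equation by $\Delta u_i$ and integrating gives, as in \eqref{est:scalar8}, the families $\sum_j\int_{\Rd}\as{u_i\,\Delta u_i\,K_{ij}*\Delta u_j}\diff x$ and $\sum_j\int_{\Rd}\as{\grad u_i\,\Delta u_i\,K_{ij}*\grad u_j}\diff x$; after Cauchy's inequality the first is controlled by $\norm{u_i}_{L^\infty(0,T;L^q)}^2\norm{(K_{ij}*\Delta u_j)(t,\cdot)}_{L^p}^2$ with $q=2p/(p-2)$ (Lemma \ref{lem:system2} and Step 2), the second by $\norm{(K_{ij}*\grad u_j)(t,\cdot)}_{L^\infty}^2\norm{\grad u_i(t,\cdot)}_{L^2}^2\le\norm{K_{ij}}_{L^2}^2\norm{\grad u_j(t,\cdot)}_{L^2}^2\norm{\grad u_i(t,\cdot)}_{L^2}^2$, both integrable in $t$. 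This yields $\frac{\diff}{\diff t}\norm{\grad u_i(t,\cdot)}_{L^2}^2+\tfrac{D_i}{2}\norm{\Delta u_i(t,\cdot)}_{L^2}^2\le C_1(t)+C_2(t)\norm{\grad u_i(t,\cdot)}_{L^2}^2$ with $C_1,C_2\in L^1(0,T)$, and Gr\"onwall gives \eqref{est:system5}--\eqref{est:system6}; note that, unlike Step 1, the equations decouple at this stage, so each $i$ is handled separately.

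\textbf{Step 4 and the main obstacle.} Since now $\grad u_j\in L^\infty(0,T;L^2(\Rd))$ and $K_{ij}\in L^2(\Rd)$, one has $K_{ij}*\grad u_j\in L^\infty(Q_T)$; with $C_0:=\max_{i,j}\norm{K_{ij}*\grad u_j}_{L^\infty(Q_T)}$, repeating for each $u_i$ the computation \eqref{est:iscalar2.1}--\eqref{est:iscalar2.1.22} (the sum over $j$ only changing constants), applying Nash's inequality, and choosing $\varepsilon,\delta$ as in the scalar proof to cancel the gradient term, gives $\tfrac1p\tfrac{\diff}{\diff t}\norm{u_i(t,\cdot)}_{L^p}^p\le C\,p^{d+1}\norm{u_i^{p/2}(t,\cdot)}_{L^1}^2$, and the iteration along $p=2^k$ converges, yielding \eqref{est:systemunif}. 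I expect the main obstacle to be Step 1: the natural multiplier $\tilde K_{ij}*K_{ij}*\Delta u_j$ couples all the quantities $\norm{\grad(K_{ij}*u_j)}_{L^2}^2$ and $\norm{\Delta(K_{ij}*u_j)}_{L^2}^2$ together through the mismatched interactions $K_{jk}*u_k$, so one must run a single Gr\"onwall argument on the aggregated $\Phi,\Psi$; the delicate part is verifying that every \emph{differentiated} kernel convolution appearing is a matched pair $\tilde K_{ij}*K_{ij}$ — the mismatched kernels entering only through the dissipation-controlled factor $\Delta(K_{jk}*u_k)$ — which is precisely why \textbf{\ref{h4}} for matched pairs suffices even in the small-mass regime.
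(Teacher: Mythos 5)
Your proof follows essentially the same approach as the paper's: test the $j$-th equation against $\tilde K_{ij}*K_{ij}*\Delta u_j$, aggregate over $i,j$ and reindex so that the coupled terms can be absorbed into the collective dissipation, then close a Gr\"onwall argument on the aggregated quantities $\Phi,\Psi$; afterwards bootstrap via maximal regularity of the heat equation, the $\Delta u_i$-energy estimate, and an Alikakos iteration, exactly as in the scalar Lemma \ref{lem:scalaresthigherorder}. Your explicit observation that only the matched kernel pairs $\tilde K_{ij}*K_{ij}$ ever appear \emph{differentiated} --- so that \textbf{\ref{h4}} alone suffices in the small-mass regime where \textbf{\ref{h2}} is not assumed --- is a correct and welcome clarification of a point the paper leaves implicit.
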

\begin{proof}
    Again, the proof is very similar to the proof of Lemma \ref{lem:scalaresthigherorder} so we provide only key details. First, testing the equation for $u_i$ against $\tilde K_{ji} * K_{ji} * \Delta u_i$ we find
    \begin{align*}
        \frac{1}{2} \frac{{\rm d}}{{\rm d}t} \int_{\mathbb{R}^d} \magg{\grad (K_{ji} *  u_i)} \diff x\,+&\, D_i \int_{\mathbb{R}^d} \magg{\Delta (K_{ji} * u_i)} \diff x= \\
        & \sum_{l = 1}^n \int_{\mathbb{R}^d} \Delta (K_{ji} * u_i ) K_{ji} * (\grad u_i \cdot \grad (K_{il} * u_l)) \diff x\\
        +& \sum_{l=1}^n \int_{\mathbb{R}^d} u_i \, \Delta (K_{il} * u_l) \tilde K_{ji} * \Delta (K_{ji} * u_i)  \diff x=: I_1 ^i + I_2 ^i .
    \end{align*}
    Cauchy's inequality and Young's convolution inequality yields for each $i$
    \begin{align*}
        \as{I_1 ^i} \leq \frac{D_i}{4} \norm{\Delta (K_{ji} * u_i)}^2_{L^2(\mathbb{R}^d)} + \underline{D}^{-1} n a^2(t) \sum_{l=1}^n \norm{ \grad (K_{il} * u_l) (t,\cdot)}_{L^2(\mathbb{R}^d)}^2 ,
    \end{align*}
    where $\underline D = \min_i D_i$, $a(t) = \max_{i,j} \norm{K_{ij}}_{L^2(\mathbb{R}^d)} \norm{ \grad u_i (t,\cdot)}_{L^2(\mathbb{R}^d)}$ and $a^2(t) \in L^1 (0,T)$ by Lemma \ref{lem:system2}. Similarly, H{\"o}lder's inequality and Cauchy's inequality yields for each $i$
    \begin{align*}
        \as{I_2 ^i} \leq \sum_{l=1}^n \frac{D_l}{4 n} \norm{\Delta (K_{il} * u_l)}_{L^2(\mathbb{R}^d)}^2 + \underline{D}^{-1} b^2(t),
    \end{align*}
    where $b(t) = n \max_{i,j} \norm{\grad (\tilde K_{ji} * K_{ji})}_{L^2(\mathbb{R}^d)} \norm{u_i}_{L^\infty(0,T; L^2(\mathbb{R}^d)} \norm{\grad u_i (t,\cdot)}_{L^2(\mathbb{R}^d)}$, and in particular, $b^2(t) \in L^1(0,T)$. Hence, we have for each $i$ 
    \begin{align*}
        \frac{1}{2} \frac{{\rm d}}{{\rm d}t} \int_{\mathbb{R}^d} &\magg{\grad (K_{ji} * u_i)} \diff x+ \frac{3 D_i}{4} \int_{\mathbb{R}^d} \magg{\Delta ( K_{ji} *  u_i )} \diff x\leq \\
        &\underline{D}^{-1} n a^2(t) \sum_{l=1}^n \norm{ \grad (K_{il} * u_l) (t,\cdot)}_{L^2(\mathbb{R}^d)}^2 
        + \sum_{l=1}^n \frac{D_l}{4 n} \norm{\Delta (K_{il} * u_l)}_{L^2(\mathbb{R}^d)}^2 + \underline{D}^{-1} b^2(t),
    \end{align*}
    and so if we sum both sides over $i,j=1,\ldots,n$ and reindex the sums on the right hand side we obtain
    \begin{align}
        \sum_{i,j=1}^n \left( \frac{1}{2} \frac{{\rm d}}{{\rm d}t} \right.&\left.\int_{\mathbb{R}^d} \magg{\grad (K_{ji} * u_i)} \diff x+ \frac{D_i}{2} \int_{\mathbb{R}^d} \magg{\Delta ( K_{ji} *  u_i )} \diff x\right) \nonumber \\
        &\leq \underline{D}^{-1} n^2 \left( a^2(t) \sum_{i,j=1}^n \norm{\grad (K_{ji} * u_i ) (t,\cdot)}_{L^2(\mathbb{R}^d)}^2 + b^2(t)  \right)
    \end{align}
    Gr{\"o}nwall's lemma gives that $K_{ij} * u_j \in L^\infty(0,T; L^2(\mathbb{R}^d))$ and $K_{ij} * \Delta u_j \in L^2(Q_T)$ for all $i,j = 1,\ldots,n$.

    By an identical argument to that made in the proof of Lemma \ref{lem:scalaresthigherorder} using maximal regularity of the heat equation, we conclude that estimates \eqref{est:system5}-\eqref{est:system6} hold, completing the first part of the proof.

    The uniform estimates on each component $u_i$ follows in an identical fashion to the scalar case, using the previously obtained estimates and the quantity $\max_{i,j} \norm{K_{ij} * \grad u_j}_{L^\infty (Q_T)}$ replacing the constant $C_0 = \norm{K * \grad u}_{L^\infty (Q_T)}$ in the proof of Lemma \ref{lem:scalaresthigherorder}.

\end{proof}

As in the scalar case, estimates obtained in Lemma \ref{lem:systemesthigherorder} give rise to improved integrability of the gradient of each component of the solution. The proof is identical to the proof of Lemma~\ref{lem:systemesthigherorder}, working with the sum across all components, and so we omit the details.
 \begin{lemma}[Improved integrability of gradient]\label{lem:systemLpgradient}
     Suppose each component of the initial data satisfies $\grad u_{i0} \in L^p(\mathbb{R}^d)$ for some $p \in [2,\infty)$. Under the conditions of Lemma \ref{lem:systemesthigherorder} there exists a constant $\tilde C_0$ such that there holds
     \begin{align}\label{est:graduLpsys}
         \norm{\grad u_i}_{L^\infty (0,T; L^p(\mathbb{R}^d))} \leq \hat C_0,
     \end{align}
     for each $i=1,\ldots,d$, where $\hat C_0 = \hat C_0 (\hat C, p, \norm{\grad u_{i0}}_{L^p (\mathbb{R}^d)})$, and $\hat C$ is given in Lemma \ref{lem:systemesthigherorder}.
 \end{lemma}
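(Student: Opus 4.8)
The plan is to run the scalar argument of Lemma \ref{lem:scalarLpgradient} species by species and then sum, using that under the hypotheses of Lemma \ref{lem:systemesthigherorder} we already have, for all $i,j=1,\ldots,n$,
$$
u_i \in L^\infty(Q_T), \qquad \grad u_i \in L^\infty(0,T;L^2(\Rd)), \qquad \Delta u_i \in L^2(Q_T),
$$
together with $K_{ij}\ast\grad u_j \in L^\infty(Q_T)$ (since $K_{ij}\in L^2(\Rd)$ by \textbf{\ref{h1}} and $\grad u_j \in L^\infty(0,T;L^2(\Rd))$) and $K_{ij}\ast\Delta u_j \in L^2(0,T;L^\infty(\Rd))$ (since $K_{ij}\in L^2(\Rd)$ and $\Delta u_j \in L^2(Q_T)$); in fact, exactly as in the proof of Lemma \ref{lem:scalaresthigherorder}, maximal regularity upgrades this to $K_{ij}\ast\Delta u_j \in L^2(0,T;L^{p_0}(\Rd))$ for some $p_0>2$, but the weaker $L^\infty$ statement already suffices here. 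Fix $p\geq 2$ and work with a smooth solution, the bound being transferred to the constructed solution through the mollified sequence $u_i^\varepsilon$ precisely as in the proof of Theorem \ref{thm:systemfinal2}. I would expand each equation of \eqref{maineq} as
$$
\frac{\p u_i}{\p t} - D_i\Delta u_i = \grad u_i\cdot\sum_{j=1}^n\grad(K_{ij}\ast u_j) + u_i\sum_{j=1}^n\Delta(K_{ij}\ast u_j),
$$
multiply by $-\DIV\!\big(\as{\grad u_i}^{p-2}\grad u_i\big) = -(p-1)\as{\grad u_i}^{p-2}\Delta u_i$, integrate over $\Rd$, and sum over $i$.

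After integration by parts, the left-hand side becomes $\tfrac{1}{p(p-1)}\tfrac{\rm d}{{\rm d}t}\sum_i\int_{\Rd}\as{\grad u_i}^p\,\diff x$ and the right-hand side yields, for each $i$, the good term $-D_i\int_{\Rd}\as{\grad u_i}^{p-2}\as{\Delta u_i}^2\,\diff x$ plus the analogues of $I_1$, $I_2$ from the proof of Lemma \ref{lem:scalarLpgradient}, now carrying an inner sum over $j$,
$$
I_1^i := \sum_{j=1}^n\int_{\Rd}\as{\grad u_i}^{p-1}\as{\Delta u_i}\,\as{K_{ij}\ast\grad u_j}\,\diff x, \qquad I_2^i := \sum_{j=1}^n\int_{\Rd}\as{\grad u_i}^{p-2}u_i\,\as{\Delta u_i}\,\as{K_{ij}\ast\Delta u_j}\,\diff x.
$$
The point is that in every summand $\as{\Delta u_i}$ is still paired with the weight $\as{\grad u_i}^{p-2}$ of the \emph{same} species, while the index $j$ enters only through the convolution factors; applying Cauchy's inequality with a small parameter (chosen as $D_i/(4n)$, say, to accommodate the finite sum over $j$) therefore absorbs all the resulting $\as{\grad u_i}^{p-2}\as{\Delta u_i}^2$ contributions into $-D_i\int\as{\grad u_i}^{p-2}\as{\Delta u_i}^2$. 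Estimating $I_1^i$ as in \eqref{eq:lemLpgrad1.3} leaves $\big(\tfrac{1}{2D_i}\sum_j\norm{K_{ij}\ast\grad u_j(t,\cdot)}_{L^\infty(\Rd)}^2\big)\int\as{\grad u_i}^p$, whose coefficient is bounded (hence in $L^1(0,T)$); estimating $I_2^i$ as in \eqref{eq:lemLpgrad1.5} and then controlling $\int\as{\grad u_i}^{p-2}$ by $\int(\as{\grad u_i}^p+\as{\grad u_i}^2)$ via Young's inequality as in \eqref{eq:lemLpgrad1.7} leaves $C_i(t)\int\as{\grad u_i}^p + C_i(t)\norm{\grad u_i(t,\cdot)}_{L^2(\Rd)}^2$ with $C_i(t)\in L^1(0,T)$, since $u_i\in L^\infty(Q_T)$ and $K_{ij}\ast\Delta u_j\in L^2(0,T;L^\infty(\Rd))$.

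Collecting everything and reindexing the finite double sum yields a differential inequality
$$
\frac{\rm d}{{\rm d}t}\sum_{i=1}^n\int_{\Rd}\as{\grad u_i}^p\,\diff x \leq \Phi(t)\sum_{i=1}^n\int_{\Rd}\as{\grad u_i}^p\,\diff x + \Psi(t),
$$
where $\Phi,\Psi\in L^1(0,T)$ — here $\Psi$ absorbs the terms $C_i(t)\norm{\grad u_i(t,\cdot)}_{L^2(\Rd)}^2$, which are integrable because $\grad u_i\in L^\infty(0,T;L^2(\Rd))$ — so Grönwall's lemma gives \eqref{est:graduLpsys} for $p\geq 4$ (the case $p=4$ being direct, as in Lemma \ref{lem:scalarLpgradient}), and the range $p\in[2,4)$ follows by interpolating between $p=2$, already contained in Lemma \ref{lem:systemesthigherorder}, and $p=4$. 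I expect the only genuinely delicate point — the main obstacle — to be the index bookkeeping in $I_1^i$ and $I_2^i$: one must verify that the inter-species coupling never forces $\as{\Delta u_i}$ to be matched against a weight $\as{\grad u_j}^{p-2}$ with $j\neq i$, which would destroy the absorption into $-D_i\int\as{\grad u_i}^{p-2}\as{\Delta u_i}^2$. Since each cross term is linear in $u_j$ (respectively $\grad u_j$, $\Delta u_j$) under convolution against $K_{ij}$, these factors can always be placed into $L^\infty$-in-space coefficients that are $L^1$ in time, so no such obstruction arises and the scalar scheme carries over verbatim after summation over $i$.
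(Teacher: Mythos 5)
Your proposal is correct and follows exactly the route the paper intends: the paper omits this proof entirely (referring back to the scalar argument "working with the sum across all components"), and your write-up faithfully fills in those details, correctly noting the key structural point that $\as{\Delta u_i}$ is always paired with the same-species weight $\as{\grad u_i}^{p-2}$ while the cross-species factors $K_{ij}\ast\grad u_j$ and $K_{ij}\ast\Delta u_j$ enter only as $L^\infty$-in-space coefficients that are integrable in time.
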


\begin{theorem}\label{thm:uniquesystem}
    Suppose \textup{\textbf{\ref{h4}}} holds. Then any nonnegative solution $\mathbf{u} = (u_1, \ldots, u_n)$ solving problem \eqref{maineq} belonging to the class
    \begin{align*}
                 u_i &\in L^\infty (Q_T) \cap L^\infty (0,T; L^1 ({\mathbb{R}^d})); \\
         \grad u_i &\in L^\infty (0,T; L^2 ({\mathbb{R}^d}));\\
         \Delta u_i &\in L^2 (Q_T),
    \end{align*}
    for each $i=1,\ldots, n$, is unique.
\end{theorem}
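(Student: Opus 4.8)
The plan is to follow the proof of Theorem~\ref{thm:scalarunique}, keeping careful track of the kernel indices. Let $\mathbf u=(u_1,\dots,u_n)$ and $\mathbf v=(v_1,\dots,v_n)$ be two nonnegative solutions in the stated class and set $w_i:=u_i-v_i$, so that each $w_i$ solves
\begin{equation*}
(w_i)_t - D_i\Delta w_i = \grad\cdot\Big(w_i\sum_{j=1}^n\grad(K_{ij}*u_j)\Big) + \grad\cdot\Big(v_i\sum_{j=1}^n\grad(K_{ij}*w_j)\Big),
\end{equation*}
with $w_i(0,\cdot)=0$. The regularity of the class (in particular $\Delta u_i\in L^2(Q_T)$, $\grad u_i\in L^\infty(0,T;L^2(\mathbb{R}^d))$, $u_i\in L^\infty(Q_T)$), together with \textbf{\ref{h1}}, makes the manipulations below rigorous. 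As in the scalar case, I would combine two energy identities — a local one for $\sum_i\norm{w_i}_{L^2(\mathbb{R}^d)}^2$ and a nonlocal one for $\sum_{i,j}\norm{K_{ji}*w_i}_{L^2(\mathbb{R}^d)}^2$ — into a Gr\"onwall estimate for a mixed local--nonlocal energy.

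First I would test the $i$-th equation against $w_i$ and sum over $i$. Writing $w_i\grad(K_{ij}*u_j)\cdot\grad w_i=\tfrac12\grad(w_i^2)\cdot\grad(K_{ij}*u_j)$ and integrating by parts, the ``$u$''-terms produce $\tfrac12\sum_{i,j}\int_{\mathbb{R}^d} w_i^2\,K_{ij}*\Delta u_j\,\dx$, bounded by $C_1(t)\sum_i\norm{w_i}_{L^2(\mathbb{R}^d)}^2$ with $C_1(t)\le\tfrac12\sum_j\max_i\norm{K_{ij}}_{L^2(\mathbb{R}^d)}\norm{\Delta u_j(t,\cdot)}_{L^2(\mathbb{R}^d)}\in L^1(0,T)$ since $\Delta u_j\in L^2(Q_T)$; the ``$v$''-terms are handled by Cauchy's inequality using $v_i\in L^\infty(Q_T)$, giving $\tfrac{D_i}{2}\norm{\grad w_i}_{L^2(\mathbb{R}^d)}^2+C_2\sum_j\norm{\grad K_{ij}*w_j}_{L^2(\mathbb{R}^d)}^2$ for a constant $C_2$. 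Thus
\begin{equation*}
\tfrac12\tfrac{d}{dt}\sum_i\int_{\mathbb{R}^d} w_i^2\,\dx + \tfrac12\sum_i D_i\int_{\mathbb{R}^d}\magg{\grad w_i}\,\dx \le C_1(t)\sum_i\int_{\mathbb{R}^d} w_i^2\,\dx + C_2\sum_{i,j}\int_{\mathbb{R}^d}\magg{\grad K_{ij}*w_j}\,\dx.
\end{equation*}
Next, for each pair $(i,j)$ I would test the $i$-th equation against $\tilde K_{ji}*K_{ji}*w_i$ and sum over $i,j$. Using the convolution adjoint identity, the left-hand side becomes $\tfrac12\tfrac{d}{dt}\sum_{i,j}\norm{K_{ji}*w_i}_{L^2(\mathbb{R}^d)}^2+\sum_{i,j}D_i\norm{\grad K_{ji}*w_i}_{L^2(\mathbb{R}^d)}^2$. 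Since $\grad(\tilde K_{ji}*K_{ji}*w_i)=\grad(\tilde K_{ji}*K_{ji})*w_i$, hypothesis \textbf{\ref{h4}} gives $\norm{\grad(\tilde K_{ji}*K_{ji}*w_i)}_{L^\infty(\mathbb{R}^d)}\le\norm{\grad(\tilde K_{ji}*K_{ji})}_{L^2(\mathbb{R}^d)}\norm{w_i}_{L^2(\mathbb{R}^d)}$; combined with $\grad u_l\in L^\infty(0,T;L^2(\mathbb{R}^d))$ for the ``$u$''-terms, and with $v_i\in L^\infty(0,T;L^2(\mathbb{R}^d))$ (from $v_i\in L^\infty(Q_T)\cap L^\infty(0,T;L^1)$) and Cauchy's inequality with a small parameter $\varepsilon$ for the ``$v$''-terms, the right-hand side is at most $n\varepsilon\sum_{i,l}\norm{\grad K_{il}*w_l}_{L^2(\mathbb{R}^d)}^2+C_5(t)\sum_i\norm{w_i}_{L^2(\mathbb{R}^d)}^2$ with $C_5\in L^1(0,T)$.

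Reindexing $i\leftrightarrow l$ in the $\varepsilon$-term and choosing $\varepsilon<\underline D/(2n)$, where $\underline D:=\min_i D_i$, absorbs it into the dissipation, leaving
\begin{equation*}
\tfrac12\tfrac{d}{dt}\sum_{i,j}\int_{\mathbb{R}^d}\magg{K_{ji}*w_i}\,\dx + \tfrac12\sum_{i,j}D_i\int_{\mathbb{R}^d}\magg{\grad K_{ji}*w_i}\,\dx \le C_5(t)\sum_i\int_{\mathbb{R}^d} w_i^2\,\dx.
\end{equation*}
To close the argument I would reindex $\sum_{i,j}\norm{\grad K_{ij}*w_j}_{L^2(\mathbb{R}^d)}^2=\sum_{i,j}\norm{\grad K_{ji}*w_i}_{L^2(\mathbb{R}^d)}^2$, multiply the first identity by $\theta:=\underline D/(4C_2)$ (assuming $C_2>0$; otherwise the nonlocal terms are absent and the local estimate closes directly), and add it to the second; the $C_2$-term is absorbed since $\tfrac12\sum_{i,j}D_i\norm{\grad K_{ji}*w_i}_{L^2(\mathbb{R}^d)}^2\ge\tfrac{\underline D}{2}\sum_{i,j}\norm{\grad K_{ji}*w_i}_{L^2(\mathbb{R}^d)}^2$. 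Setting $\mathcal E(t):=\theta\sum_i\norm{w_i(t,\cdot)}_{L^2(\mathbb{R}^d)}^2+\sum_{i,j}\norm{K_{ji}*w_i(t,\cdot)}_{L^2(\mathbb{R}^d)}^2$, this yields $\tfrac{d}{dt}\mathcal E(t)\le C(t)\,\mathcal E(t)$ with $C\in L^1(0,T)$, and since $\mathcal E(0)=0$, Gr\"onwall's lemma forces $\mathcal E\equiv0$, hence $\mathbf u=\mathbf v$. The only genuine difficulty is the index bookkeeping: one must choose the test function $\tilde K_{ji}*K_{ji}*w_i$ (rather than $\tilde K_{ij}*K_{ij}*w_j$) so that, after relabeling, the ``bad'' nonlocal term $\sum_{i,j}\norm{\grad K_{ij}*w_j}_{L^2(\mathbb{R}^d)}^2$ of the first estimate lines up with the dissipation of the second, and one must check that the $\varepsilon$-term produced in the second estimate can be absorbed uniformly in $i$ — which is precisely where \textbf{\ref{h4}} for every pair $(i,j)$ enters.
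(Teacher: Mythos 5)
Your proposal is correct and follows essentially the same route as the paper: a local $L^2$ energy estimate for the $w_i$, a nonlocal energy estimate obtained by testing with $\tilde K_{ji}*K_{ji}*w_i$ (the paper's reindexed test function $\tilde K_{ij}*K_{ij}*w_j$ is the same choice up to relabeling), hypothesis \textbf{(H4)} to control $\grad(\tilde K_{ji}*K_{ji})*w_i$ in $L^\infty$, absorption of the $\varepsilon$-term into the dissipation, and a weighted combination closed by Gr\"onwall. The index bookkeeping and the choice of weights match the paper's argument in all essential respects.
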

\begin{proof}
    Set $\underline{D} := \min_{i} D_i > 0$. We begin as in the proof of Theorem \ref{thm:scalarunique}: assume that there are two solutions, $\mathbf{u}$ and $\mathbf{v}$. Denote by $w_i := u_i - v_i$. Then, for each $i=1,\ldots,n$, $w_i$ solves
    \begin{align}\label{eq:sysunique0.0}
        (w_i)_t - D_i \Delta w_i = \grad \cdot\left( w_i \sum_{j=1}^n \grad (K_{ij} * u_j ) \right) + \grad \cdot \left( v_i \sum_{j=1}^n \grad (K_{ij} * w_j) \right).
    \end{align}
    Testing against $w_i$ for each $i=1,\ldots,n$, summing over $i=1,\ldots,n$ and following an identical procedure to that of the proof of Theorem \ref{thm:scalarunique} yields
    \begin{align}\label{eq:sysunique1.0}
        \frac{1}{2} \sum_{i=1}^n \left( \frac{{\rm d} }{{\rm d}t} \int_{\mathbb{R}^d} w_i ^2 \diff x+ D_i \int_{\mathbb{R}^d} \magg{\grad w_i} \diff x \right) \leq &\, C_1(t) \sum_{i=1}^n \int_{\mathbb{R}^d} w_i ^2 \diff x\nonumber \\
        &+ C_2 \sum_{i,j=1}^n \int_{\mathbb{R}^d} \magg{\grad (K_{ij} * w_j)} \diff x,
    \end{align}
    where $C_1(t) = \max_{i,j} \norm{K_{ij} * \Delta u_j (t,\cdot)}_{L^\infty({\mathbb{R}^d})} \in L^1(0,T)$ and $C_2 = \max_{i} \norm{v_i}_{L^\infty(Q_T)} / \underline{D}$. 

    Now we seek to estimate the second term on the right hand side of \eqref{eq:sysunique1.0}. To this end, we consider equation \eqref{eq:sysunique0.0} by first reindexing $j \mapsto l$ followed by $i \mapsto j$. We then test with $\tilde K_{ij} * K_{ij} * w_j$ and sum the result over $j=1,\ldots,n$ to obtain
    \begin{align}\label{eq:sysunique1.2}
       \sum_{j=1}^n \left( \frac{1}{2} \frac{{\rm d}}{{\rm d}t}  \int_{\mathbb{R}^d} \magg{K_{ij} * w_j} \diff x\right.&\left. + \, D_j \int_{\mathbb{R}^d} \magg{\grad (K_{ij} * w_j)} \diff x \right) \nonumber \\
        =&\, - \sum_{j,l=1}^n \int_{\mathbb{R}^d} \grad ( \tilde K_{ij} * K_{ij} * w_j) \cdot w_j \grad (K_{jl} * u_l )) \diff x\nonumber \\
        &\, - \sum_{j,l=1}^n \int_{\mathbb{R}^d} \grad (\tilde K_{ij} * K_{ij} * w_j) \cdot v_j \grad (K_{jl} * w_l ) \diff x.
    \end{align}
    We may easily estimate the first term on the right hand side of \eqref{eq:sysunique1.2} as done in the proof of Theorem \ref{thm:scalarunique} to obtain
    \begin{align}\label{eq:sysunique1.3}
        \as{- \sum_{j,l=1}^n \int_{\mathbb{R}^d} \grad (\tilde K_{ij} * K_{ij} * w_j) \cdot w_j \grad (K_{jl} * u_l )) \dx} \leq C_3 n \sum_{j=1}^n \norm{w_j (t,\cdot)}_{L^2({\mathbb{R}^d})}^2 ,
    \end{align}
    where $C_3$ depends on $\norm{\grad (\tilde K_{ij} * K_{ij})}_{L^2(\mathbb{R}^d)}$ and $\norm{\grad (K_{jl} * u_l))}_{L^\infty(0,T; L^2(\mathbb{R}^d))}$, finite by hypothesis \textup{\textbf{\ref{h4}}} and by the assumed regularity of each component $u_i$ respectively, for each $i,j,l = 1,\ldots,n$.
    
    The second term on the right hand side of \eqref{eq:sysunique1.2} requires a bit more care. First, for fixed $i$, $j$, $l$, we can estimate as in the proof of Theorem \ref{thm:scalarunique} to deduce
    \begin{align}\label{eq:sysunique1.4}
        \Big|\int_{\mathbb{R}^d} \grad (\tilde K_{ij} * K_{ij} * w_j) \cdot v_j & \grad (K_{jl} * w_l ) \dx\Big| \nonumber\\ &\leq \frac{C_5}{2} \left( \varepsilon_l \norm{\grad K_{jl} * w_l (t,\cdot)}_{L^2({\mathbb{R}^d})}^2 + \varepsilon_l ^{-1} \norm{w_j (t,\cdot)}_{L^2({\mathbb{R}^d})}^2 \right),
    \end{align}
    where $C_5$ depends on $\norm{\grad (\tilde K_{ij} * K_{ij} ) }_{L^2({\mathbb{R}^d})}$, $\norm{v_j}_{L^\infty(Q_T)}$, and  $\varepsilon_l > 0$ is to be determined. Notice carefully that the right hand side does not depend on $i$ as this dependence is absorbed into the condition $\grad (\tilde K_{ij} * K_{ij} ) \in L^2({\mathbb{R}^d})$. Summing \eqref{eq:sysunique1.4} over $j,l = 1,\ldots,n$ then gives
    \begin{align}\label{eq:sysunique1.5}
       \Big|\sum_{j,l=1}^n \int_{\mathbb{R}^d} \grad (\tilde K_{ij} \,*&\, K_{ij} * w_j) \cdot v_j  \grad (K_{jl} * w_l ) \dx\Big| \nonumber \\
        &\leq \frac{C_5}{2} \sum_{j,l=1}^n \left( \varepsilon_l \norm{\grad (K_{jl} * w_l) (t,\cdot)}_{L^2({\mathbb{R}^d})} + \varepsilon_l ^{-1} \norm{w_j (t,\cdot)}_{L^2({\mathbb{R}^d})}^2 \right).
    \end{align}
Combining estimates \eqref{eq:sysunique1.3} and \eqref{eq:sysunique1.5} we arrive at
\begin{align}
    \sum_{j=1}^n \left( \frac{1}{2} \frac{{\rm d}}{{\rm d}t} \int_{\mathbb{R}^d} \right.&\left. \magg{K_{ij} * w_j} \diff x+ D_j \int_{\mathbb{R}^d} \magg{\grad (K_{ij} * w_j)} \diff x \right) \leq C_3 n \sum_{j=1}^n \norm{w_j (t,\cdot)}_{L^2({\mathbb{R}^d})}^2 \nonumber \\
    &+ \frac{C_5}{2} \sum_{j,l=1}^n \left( \varepsilon_l \norm{\grad (K_{jl} * w_l) (t,\cdot)}_{L^2({\mathbb{R}^d})} + \varepsilon_l ^{-1} \norm{w_j (t,\cdot)}_{L^2({\mathbb{R}^d})}^2 \right).
\end{align}
Notice again that the right hand side does not depend on $i$. Hence, we choose $\varepsilon_l = \frac{D_l}{C_5 n}$ for each $l = 1,\ldots,n$, sum both sides from $i=1,\ldots,n$, and then reindex the right hand side as $j \mapsto i$ followed by $l \mapsto j$ to conclude that
\begin{align}\label{eq:sysunique1.6}
    \frac{1}{2} \sum_{i,j=1}^n \left( \frac{{\rm d}}{{\rm d}t} \int_{\mathbb{R}^d} \magg{K_{ij} * w_j} \diff x+ D_j \int_{\mathbb{R}^d} \magg{\grad (K_{ij} * w_j)} \diff x\right) \leq C_6 (t) \sum_{i=1}^n \norm{w_j (t,\cdot)}_{L^2 ({\mathbb{R}^d})}^2 ,
\end{align}
for some $C_6 (t) \in L^1(0,T)$. 

To conclude, we multiply both sides of estimate \eqref{eq:sysunique1.0} by $\frac{\underline{D}}{4C_2}$ and combine the result with estimate \eqref{eq:sysunique1.6} to obtain
\begin{align}
    &\frac{\underline{D}}{8 C_2} \sum_{i=1}^n \frac{{\rm d} }{{\rm d}t} \int_{\mathbb{R}^d} w_i ^2 \diff x+ \frac{1}{2} \sum_{i,j=1}^n \frac{{\rm d}}{{\rm d}t} \int_{\mathbb{R}^d} \magg{K_{ij} * w_j} \diff x\leq C_7(t) \sum_{i=1}^n \int_{\mathbb{R}^d} w_i ^2 \diff x,
\end{align}
where $C_7(t) \in L^1(0,T)$. The conclusion of the theorem follows from Gr{\"o}nwall's lemma.
\end{proof}

\subsection{Existence of unique, global classical solutions}

We conclude with the proof of Theorems \ref{thm:systemfinal}-\ref{thm:systemfinal2}.
\begin{proof}[Proof of Theorem \ref{thm:systemfinal}]
    By Lemma \ref{lem:systemesthigherorder}, the global weak solution $\mathbf{u}$ obtained in either Theorem \ref{thm:existarbmasssystem}, or Theorem \ref{thm:existsmallmasssystem} in addition to hypothesis \textup{\textbf{\ref{h4}}}, satisfies for each component $u_i$:
        \begin{align*}
                 u_i &\in L^\infty (Q_T) \cap L^\infty (0,T; L^1 ({\mathbb{R}^d})); \\
         \grad u_i &\in L^\infty (0,T; L^2 ({\mathbb{R}^d}));\\
         \Delta u_i &\in L^2 (Q_T).
    \end{align*}
    Arguing as in the scalar case, each component $u_i$ satisfies $\norm{u_i (t,\cdot) - u_{i0}}_{L^2(\mathbb{R}^d)} \to 0$ as $t \to 0^+$, and so $\mathbf{u} = (u_1,\ldots,u_n)$ is a strong solution solving problem \ref{maineq} in the sense of Definition \ref{def:strongsolution}.
\end{proof}

We conclude with the proof of Theorem \ref{thm:systemfinal2}. The ingredients are essentially identical to the proof of Theorem \ref{thm:scalarclassicalsolution} and so we omit most details.
\begin{proof}[Proof of Theorem \ref{thm:systemfinal2}]
    The argument is identical to that of Theorem \ref{thm:scalarclassicalsolution}. First, the improved integrability of the gradient of each component of the solution $u_i$ allows one to conclude by $L^p$-theory of parabolic equations and the Sobolev embedding that $u_i \in C^{\sigma, 1+\sigma}_{loc} (Q_T)$ for some $\sigma \in (0,1)$, since $p > d+2$. Working with each component $w_{i,j} := \frac{\p u_i}{\p x_j}$, we find that under hypotheses \textbf{\ref{h2}} and \textbf{\ref{h6}}, $w_{i,j} \in C^{\sigma, 1+\sigma}_{loc}  (Q_T)$ for some $\sigma \in (0,1)$, for each $i,j = 1,\ldots,d$. Consequently, each component $u_i \in C^{\sigma, 2+\sigma}_{loc} (Q_T)$ and Schauder estimates imply that $u_i \in C^{1 + \sigma^\prime/2,2+\sigma^\prime} _{loc} (Q_T)$ for each $i=1,\ldots,n$, for any $\sigma^\prime < \sigma$. This additional regularity paired with \textbf{\ref{h6}} and the strong maximum principle yields the strict positivity of each component $u_i$.
\end{proof}

\section{Numerical Simulations}\label{sec:apps}

In this section, we present some illustrative numerical examples in one spatial dimension and discuss how they relate to the theory developed in the previous sections. A primary motivation of this exploration is that the long-time asymptotics for general kernels is not entirely clear. In particular, whether nontrivial stationary states solving the scalar equation \eqref{eq:mainscalar} exist on the whole space remains a challenging question. Consider the following: in \cite[Theorem 1.3]{MR4544662}, the authors establish some minimal requirements on the kernel $K$ such that solutions to \eqref{eq:mainscalar} satisfy $\norm{u(t, \cdot) - G(t,\cdot)}_{L^1(\mathbb{R}^d)} \to 0$ in the long-time limit, where $G(t,x)$ is the heat kernel on $\mathbb{R}^d$. More precisely, in addition to some other technical requirements, they assume that $K \in W^{1,\infty}(\mathbb{R}^d)$ with $\grad W \in L^{d - \varepsilon} (\mathbb{R}^d)$ for some $\varepsilon > 0$. On the other hand, as noted in \cite{MR4544662}, when the kernel $K$ is sufficiently singular (e.g., $K(x) = -\alpha \ln \as{x}$, $\alpha >0$, as in the case of Keller-Segel, see \cite{MR2568716}), nontrivial solution profiles may exist in the long-time limit. This leaves a gap in the regularity assumptions on $K$ that is occupied by those kernels of bounded variation: they do not have a well-defined element $\grad K$ belonging to any $L^p$-space, nor are they singular. Relatedly, in \cite[Theorem 6.1]{MR3950320}, it is shown that for linear diffusion, global minimisers do not exist for kernels satisfying $\limsup_{\as{x} \to \infty} \grad K \cdot x < 2 d D$; when $K$ is only of bounded variation, it is not immediately clear whether those techniques used could produce the same result for lower regularity kernels. 

We therefore demonstrate differences in dynamical behaviour for the prototypical top-hat kernel defined in \eqref{detectionkernelT}, which satisfies \textbf{\ref{h2}} but otherwise falls outside of the regularity regimes of currently known results. We consider several aspects: a. the magnitude of the strength of attraction to the potential; b. whether the potential is attractive/repulsive; c. differences between the single- and multi-species cases. 

In our numerical simulations, it is difficult to determine whether a true (numerical) steady state has been reached, or whether \textit{metastability} is present for this particular kernel \cite{carrillo2019aggregation}. In some cases (e.g., Figure \ref{fig:scalar1}), solutions appear to decay as in the smooth-kernel cases; in other cases (e.g., Figures \ref{fig:scalar2}-\ref{fig:scalar3}), it is less clear that solutions will continue to decay. This leaves the following possibilities for general kernels:
\begin{enumerate}
    \item Case I: there are no nontrivial stationary states for problem \eqref{eq:mainscalar} on the whole space; if so, the observed ``stationary'' profiles may be metastable states, or otherwise some nontrivial state that appears due to an approximation of the whole space via a bounded domain with no-flux boundary conditions. This suggests that, as the numerical domain length $L$ increases, the observed state would decrease.
    \item Case II: there exist nontrivial stationary states for problem \eqref{eq:mainscalar} on the whole space; if so, we may then ask several further questions, including how many exist, and their stability. 
\end{enumerate}
In either case, it is an interesting and challenging question to ask: what are the minimal regularity requirements on the kernel $K$ so that nontrivial stationary states solving problem \eqref{eq:mainscalar} exist on the whole space, or even on a bounded domain with zero-flux boundary conditions? 

Once such a criterion has been established, we may then ask: are these same conditions shared by the $n$-species system \eqref{maineq}? In addition to the implications that kernel regularity may have on the existence/non-existence of stationary states to \eqref{maineq} on the whole space, there are other features not covered by our previous analysis. In particular, it is unclear whether solutions exist in the arbitrary mass regime when the detailed balance condition \textbf{\ref{h5}} is false; this has several implications, such as the lack of a free-energy functional, which is essential to proving the non-existence of local/global minimisers as achieved in \cite{MR3950320}. This lack of energy structure may yield long-term dynamics that are significantly different from the scalar cases. We hope that these simulations, as complements to the rigorous analysis, provide some possibilities for future study of cases falling outside our technical criteria used.

Using a finite-volume method proposed in \cite{MR3372289, MR4605931}, we solve the problem in a fixed one-dimensional domain with no-flux boundary conditions for $1$- and $2$-species cases as an approximation to the problem on the whole space. We describe the method briefly here, directing readers to \cite{Carrillo2015FiniteVolume} for further details and discussion. The approximate domain $(-L,L)$ is divided into uniform finite-volume cells $C_j = [ x_{j-\tfrac{1}{2}}, x_{j + \tfrac{1}{2}} ]$ of size $\Delta x$, where $x_j = j \Delta x$, for a fixed number of cells $j \in \{ - M, \ldots, M\}$. We approximate the cell averages of the solution $u$, $\overline{u}_j (t) := \frac{1}{\Delta x} \int_{C_j} u(t,x) \dx$, via the system of ODEs for $\overline{u}_j (t)$:
\begin{align*}
    \frac{{\rm d} \overline{u}_j}{{\rm d}t} = - \left( \frac{ F_{j+1/2} - F_{j-1/2}}{\Delta x} \right),
\end{align*}
which is obtained by integrating \eqref{eq:mainscalar} each cell $C_j$. The numerical flux $F_{j + 1/2} (t)$ is an approximation of the continuous flux 
\begin{align}\label{eq:cont_flux}
\xi(t,x) := D \log (u) +  K * u
\end{align}
at the cell interface $x_{j+1/2}$ at time $t$. To compute the flux, we use a standard upwind scheme with a generalised minmod limiter, see \cite[Eq. (2.2)-(2.7)]{Carrillo2015FiniteVolume}. The scheme is second order accurate in space, and we use 100 cells per spatial unit, i.e., given a domain of length $2L$, $200^*L$ cells are used to solve the problem over $(-L,L)$. Time stepping is performed using a third order strong stability preserving Runge-Kutta method \cite{MR1854647}. 

\subsection{The Scalar Equation}

Several models studied in mathematical ecology feature a spatial convolution with a tophat kernel directing movement towards or away from some stimulus (e.g., a resource gradient \cite{Fagan2017perceptual}, scent marks on a landscape \cite{pottslewis2019}, remembered locations of previous interactions \cite{pottslewis2016,potts2016territorial}, or detection of the population density itself \cite{pottslewis2019, wangsalmaniw2022}). We consider the following prototypical scenario with tophat kernel:
\begin{align}\label{application:scalartophat}
    \begin{cases}
        \frac{\p u}{\p t} = \frac{\p}{\p x} \left( D \frac{\p u}{\p x} + u \frac{\p  K_{\text{tophat}} * u }{\p x} \right) \quad \text{ in } (0,T) \times \mathbb{R}, \\
        u(0,x) = \chi_{(-\ell,\ell)} (x), \quad\quad\quad\quad\quad\quad\quad \text{ in } \mathbb{R},
\end{cases}
\end{align}
where $D>0$ and $\chi_{(-\ell,\ell)}(x)$ is the characteristic function of the symmetric interval of radius $\ell>0$, normalized such that $\norm{\chi_{(-\ell,\ell)}}_{L^1(\mathbb{R})} = 1$. Here, $K_{\text{tophat}}$ denotes the top-hat kernel with a strength parameter $\alpha$ and perceptual radius $R$ introduced in \eqref{detectionkernelT}. Members of population $u$ are able to detect a nonlocal average density $R$ units about the location of detection, biasing the movement towards/away from areas of high/low density regions depending on the sign and magnitude of the parameter $\alpha$. The convention in \eqref{detectionkernelT} is such that it is an \textit{attractive} kernel, hence $\alpha>0$ corresponds to attraction to areas of high density, whereas $\alpha<0$ corresponds to repulsion from high density areas.

As previously noted, the top-hat kernel satisfies hypotheses \textup{\textbf{\ref{h1}}}-\textup{\textbf{\ref{h3}}} and \textbf{\ref{h6}}, but does not have a gradient in any $L^p$ space. In this example, it is informative to note that in one dimension the problem becomes \textit{local} in the sense that the nonlocal interaction term can be written as the difference of Diracs:
$$
\frac{\p  K_{\text{tophat}} * u }{\p x} (t,x) = -\alpha \left( \frac{u(t,x+R) - u(t,x-R)}{2 R} \right).
$$

Therefore, for any given smooth $\phi$ there holds
$$
\norm{\frac{\p}{\p x} ( K_{\text{tophat}} * \phi) }_{L^1(\mathbb{R})} \leq \frac{\as{\alpha}}{R} \norm{\phi}_{L^1(\mathbb{R})},
$$
and so \textup{\textbf{\ref{h2}}} is satisfied with $C_K = \as{\alpha}/ R$. By the comments made in the introduction, this is sufficient to conclude that hypothesis \textup{\textbf{\ref{h4}}} is also satisfied. Hypothesis \textup{\textbf{\ref{h3}}} and \textup{\textbf{\ref{h6}}} are obviously satisfied.

By Theorem \ref{thm:scalarclassicalsolution}, there exists a unique, global classical solution solving problem \eqref{application:scalartophat}. Strictly speaking, the initial data does not satisfy the hypotheses of the Theorem; in this sense, the solution is only classical on the open time interval $(0,T)$ and the initial data is satisfied in the sense of a weak solution.

The size of the simulated domain $L$ is chosen large enough to approximate the problem on the whole $\mathbb{R}$ if the long-time behaviour leads to a fast-decaying at infinity steady state. If the diffusion is dominating, then the no-flux boundary condition will change the long-time asymptotic behaviour of the equation. Our numerical experiments indicate that if there are steady solutions of \eqref{application:scalartophat}, they are supported on the whole domain as one expects from the effect of the linear diffusion term.

In Figures \ref{fig:scalar1}-\ref{fig:scalar4}, we present simulations for four exemplary cases of the dynamical behaviour. In all cases, we fix
\begin{align}\label{eq:numerics_fixed_values}
    D = 0.25, \quad R=1.0, \quad \ell = 4.0,
\end{align}
and $L$ is indicated in each of the figure captions. In each figure, we change only the strength parameter $\alpha$. In the upper panel (A), we display a contour plot of the solution profile and plot the free energy with respect to time; in the lower panel (B), we display cross-sections of the solution profile at select times. The solution profile is the solid black line, while the quantity $\as{\xi}$ is the dashed black line, where $\xi$ is given by \eqref{eq:cont_flux} with $K = K_{\textup{tophat}}$, which should be constant on the support of the solution $u$ when a (numerical) steady-state is achieved. Studying this quantity provides evidence that we are indeed at a (numerical) steady-state. Of course, by Theorem \ref{thm:scalarclassicalsolution} we know that the solution is strictly positive everywhere; therefore, the grey columns indicate regions for which the solution $u$ is smaller than $10^{-4}$.

In Figure \ref{fig:scalar1}, we have a small amount of self-attraction ($\alpha = 2$). In this case, the diffusive forces clearly dominate and the solution settles down to the constant state (i.e., $u \equiv 1/2L$). The free energy decreases very slowly. On a $\log$-scale (not depicted here), the free energy is still decreasing. We observe similar behaviour for small amounts of self-repulsion. 

In Figure \ref{fig:scalar2}, we have a large amount of self-attraction ($\alpha = 30$). In contrast to Figure \ref{fig:scalar1}, the attractive forces appear to dominate and the solution concentrates a majority of its mass within a single region about $x=0$. Within the grey column, the ``numerical support'' of $u$, we observe at time $t=100$ that $\xi$ is constant. For longer times, $\xi$ appears to approach a constant value over the entire domain (as in, e.g., Figure \ref{fig:scalar3}). On a $\log$-scale, different from the weak attraction case, the free energy does not appear to be changing. By the numerical method used here, it remains difficult to determine whether this is an effect of the no-flux boundary condition alone.

In contrast to the single peak observed for strong attraction, in Figure \ref{fig:scalar3} we observe two distinct peaks when the amount of self-attraction is intermediate between the strengths used for Figure \ref{fig:scalar1} and Figure \ref{fig:scalar2} ($\alpha = 20$). In very short times, we see that $\xi$ is constant over the regions of concentration; running until time $t=200$, we observe that $\xi$ is constant over the entire domain. The solution, during the time spanned from $t=2.7$ to $t=200$, is slowly concentrating around the two peaks dissolving the mass that is still located at $x=0$ and $t=2.7$. We have checked this behavior by plotting the densities in log-scale. We again remark that it is difficult to determine whether this behaviour is due to the no-flux boundary condition used.

Finally, in Figure \ref{fig:scalar4} we have a large amount of self-repulsion ($\alpha = -20$). In this case, the repulsive forces produce a patterned state, at least for small times; however, for larger times we begin to observe a very slow decay to a constant steady state. Similar to Figure \ref{fig:scalar1}, we observe the (numerical) solution to approach $1/2L$. In such cases, we do not believe a no-flux boundary condition will appropriately approximate the solution on the whole space for fixed numerical domain lengths.

These examples, modifying only the strength of nonlocal interaction, highlight some intuitive behaviour. First, when the interaction strength is too small, patterned states cannot form; this is known for smooth potentials in a bounded domain, see \cite{Carrillo2020LongTime}. On the other hand, patterned states can form for either attractive or repulsive kernels on a bounded domain; it may be the case that without confinement by the domain itself, repulsive kernels cannot produce a stationary state on the whole space. As discussed at the beginning of this section, it is unclear which attractive kernels of sufficient irregularity can produce a nontrivial patterned state on the whole space. We leave these questions for future investigation.

\begin{figure}
    \centering
    \subfloat[]{\includegraphics[width=0.85\textwidth]{"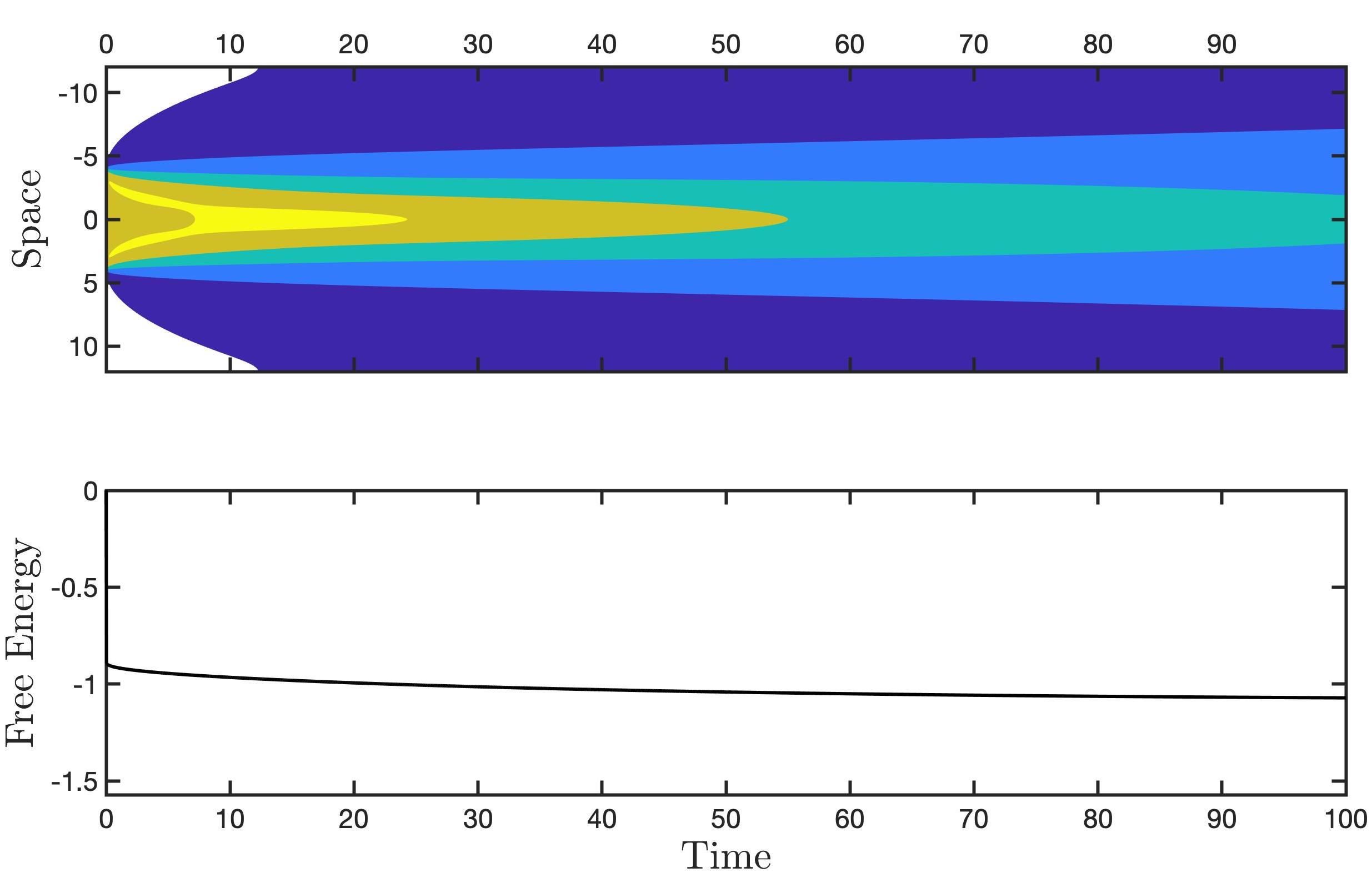"}\label{fig:ScalarContourWeakAttraction}}\\
    \subfloat[]{\includegraphics[width=0.85\textwidth]{"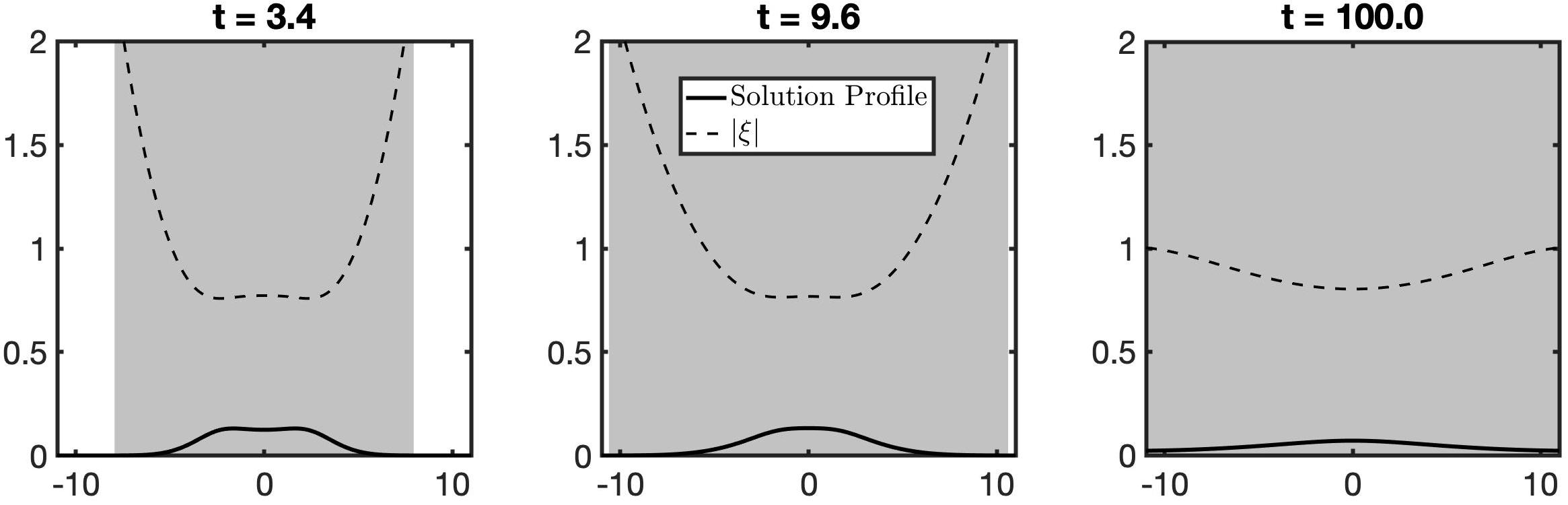"}\label{fig:ScalarFourPanelWeakAttraction}}
    \caption{Simulation of problem \eqref{application:scalartophat} with $\alpha = 2.0$ and numerical domain size of $24$ ($L=12$). A supplementary video of this simulation is hosted on figshare found here: \url{https://doi.org/10.6084/m9.figshare.25942969.v1}}
    \label{fig:scalar1}
\end{figure}

\begin{figure}
    \centering
    \subfloat[]{\includegraphics[width=0.85\textwidth]{"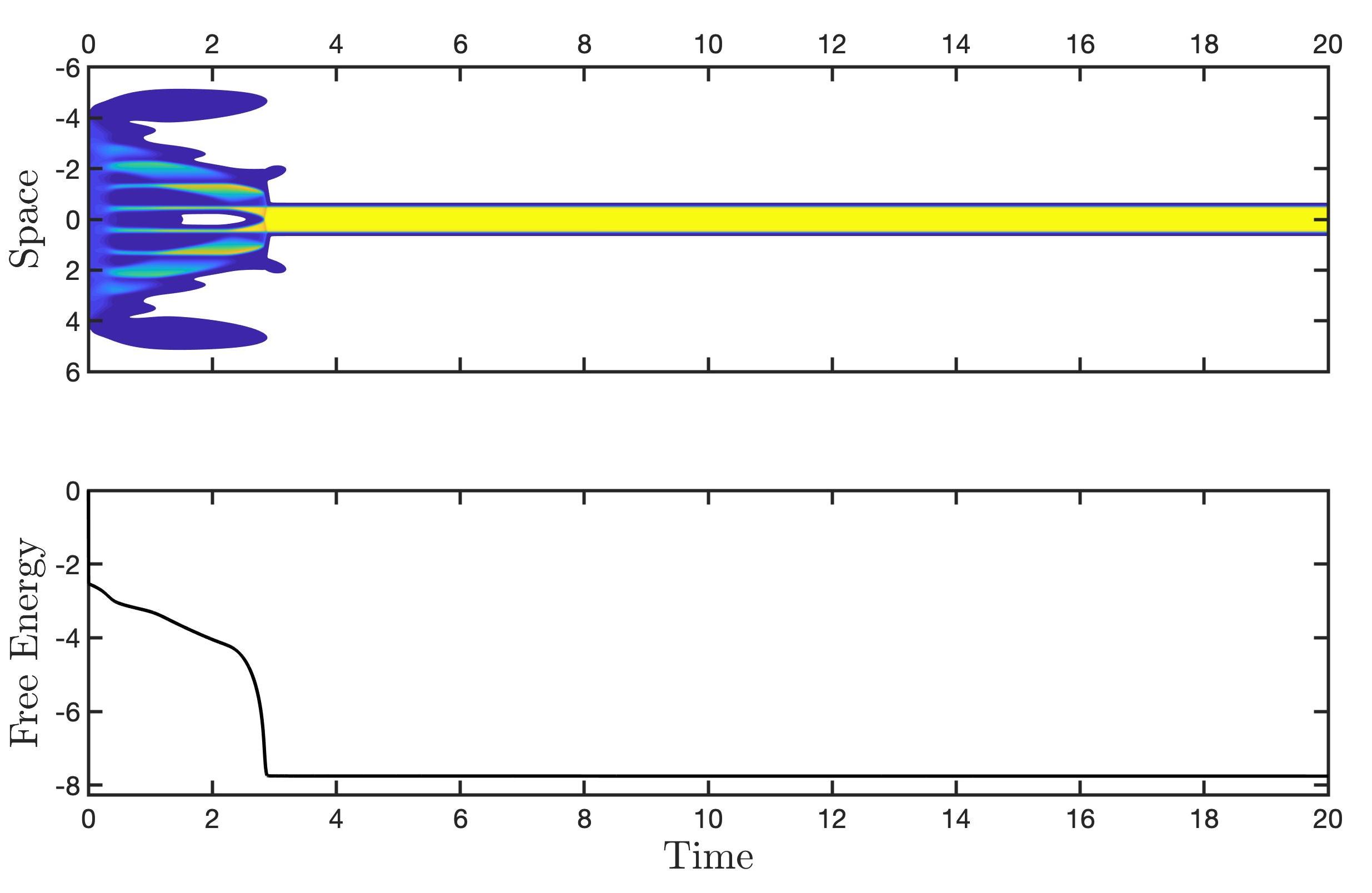"}\label{fig:ScalarContourStrongAttraction1}}\\
    \subfloat[]{\includegraphics[width=0.85\textwidth]{"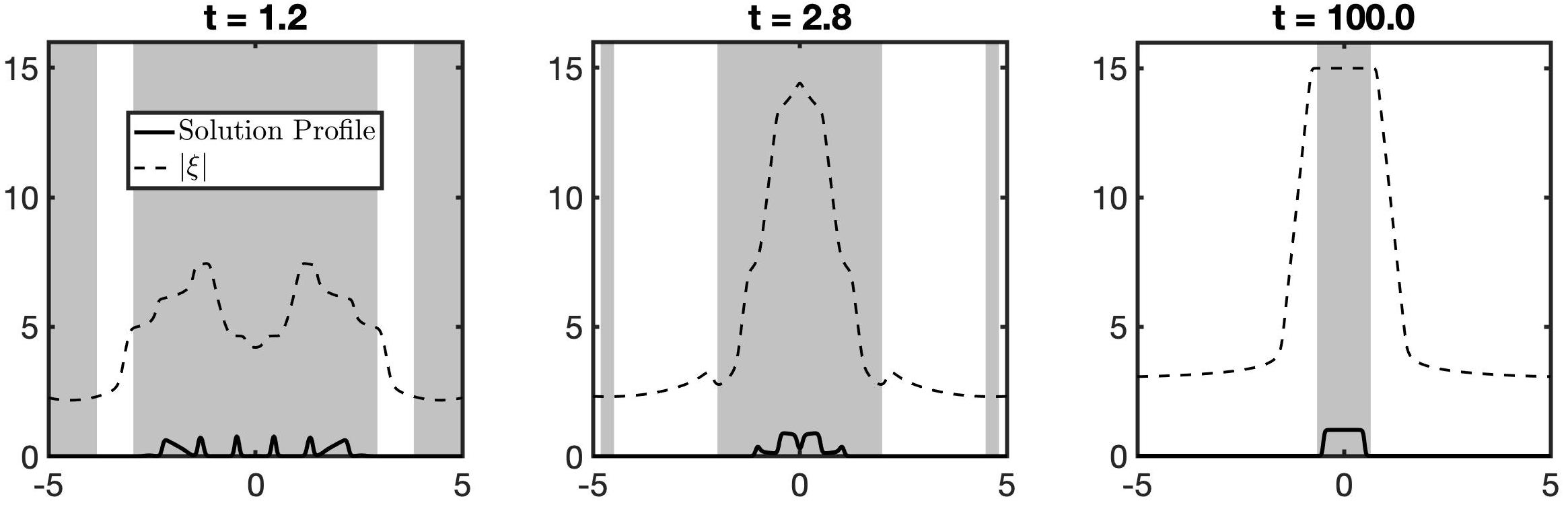"}\label{fig:ScalarFourPanelStrongAttraction1}}
    \caption{Simulation of problem \eqref{application:scalartophat} with $\alpha = 30$ and numerical domain size of $12$ ($L=6$). A supplementary video of this simulation is hosted on figshare found here: \url{https://doi.org/10.6084/m9.figshare.25934431.v1}}
    \label{fig:scalar2}
\end{figure}

\begin{figure}
    \centering
    \subfloat[]{\includegraphics[width=0.85\textwidth]{"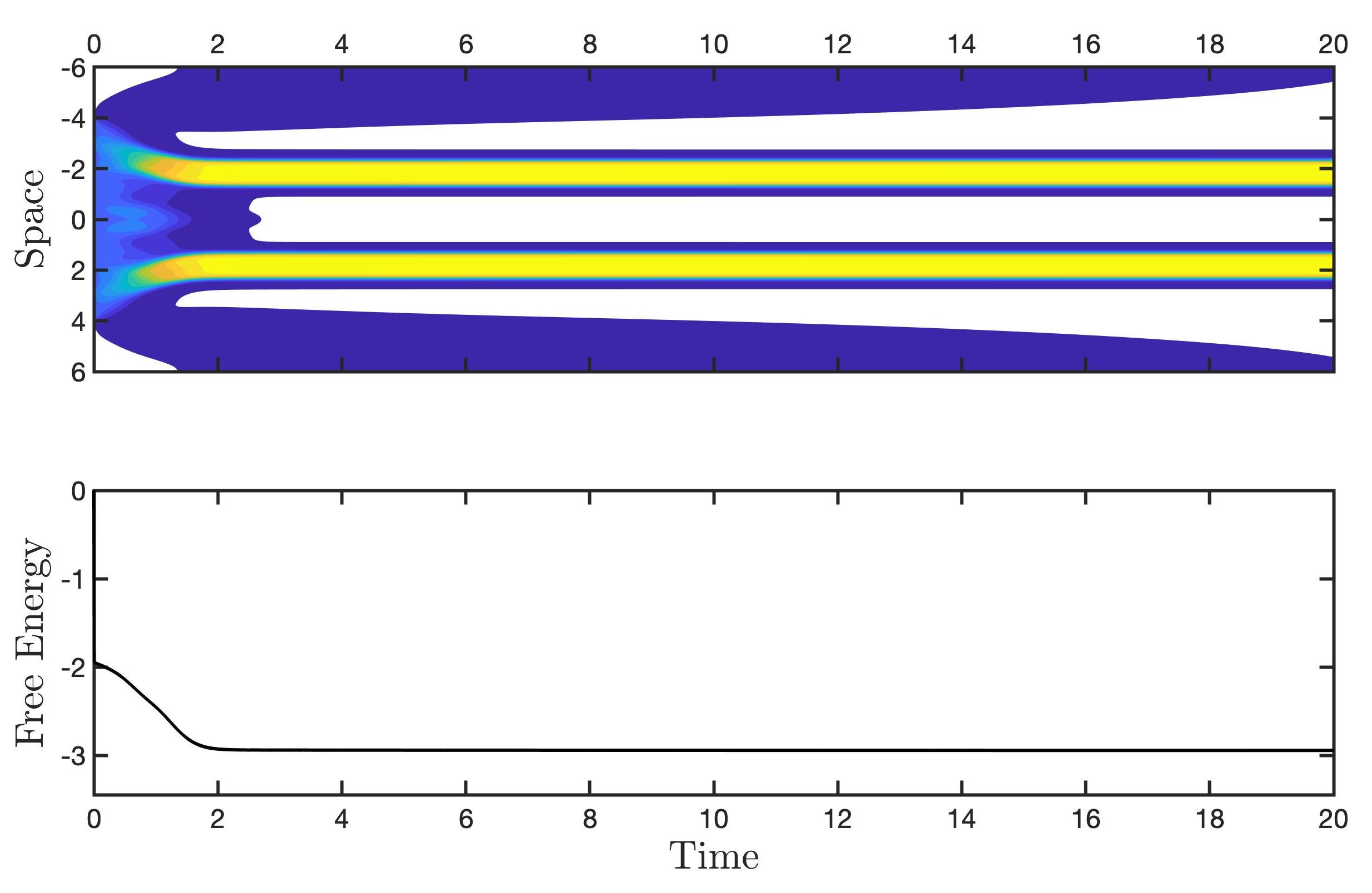"}\label{fig:ScalarContourStrongAttraction2}}\\
    \subfloat[]{\includegraphics[width=0.85\textwidth]{"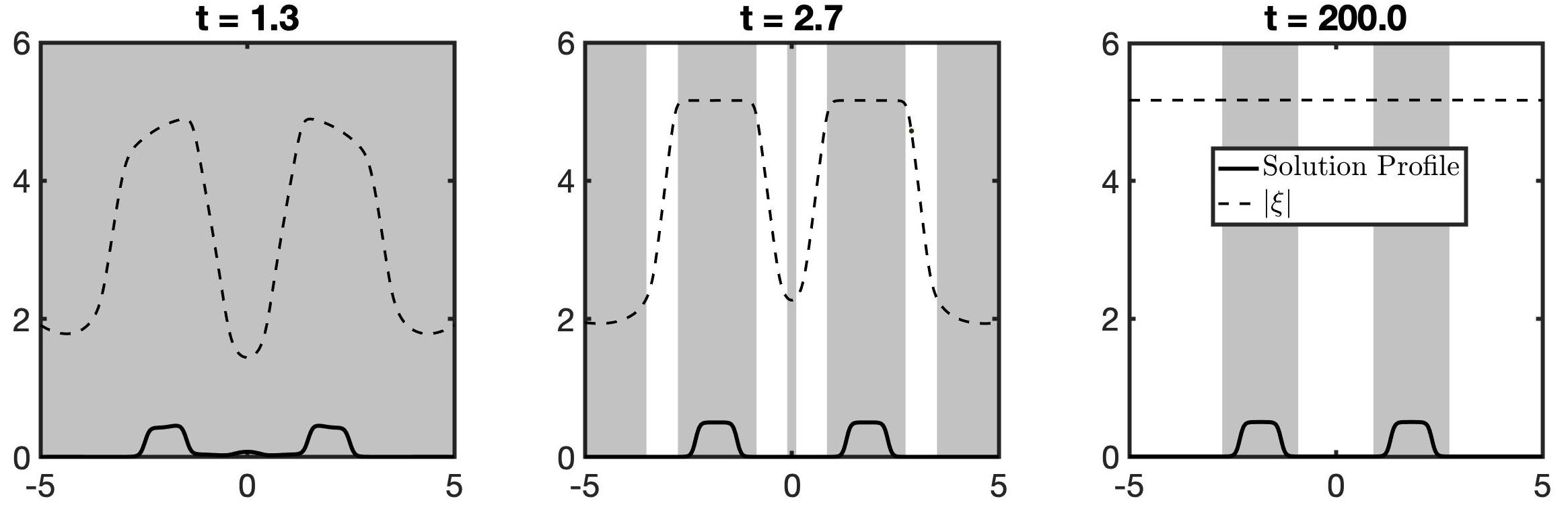"}\label{fig:ScalarFourPanelStrongAttraction2}}
    \caption{Simulation of problem \eqref{application:scalartophat} with $\alpha = 20$ and numerical domain size of $24$ ($L=12$). A supplementary video of this simulation is hosted on figshare found here: \url{https://doi.org/10.6084/m9.figshare.25934425.v1}}
    \label{fig:scalar3}
\end{figure}

\begin{figure}
    \centering
    \subfloat[]{\includegraphics[width=0.85\textwidth]{"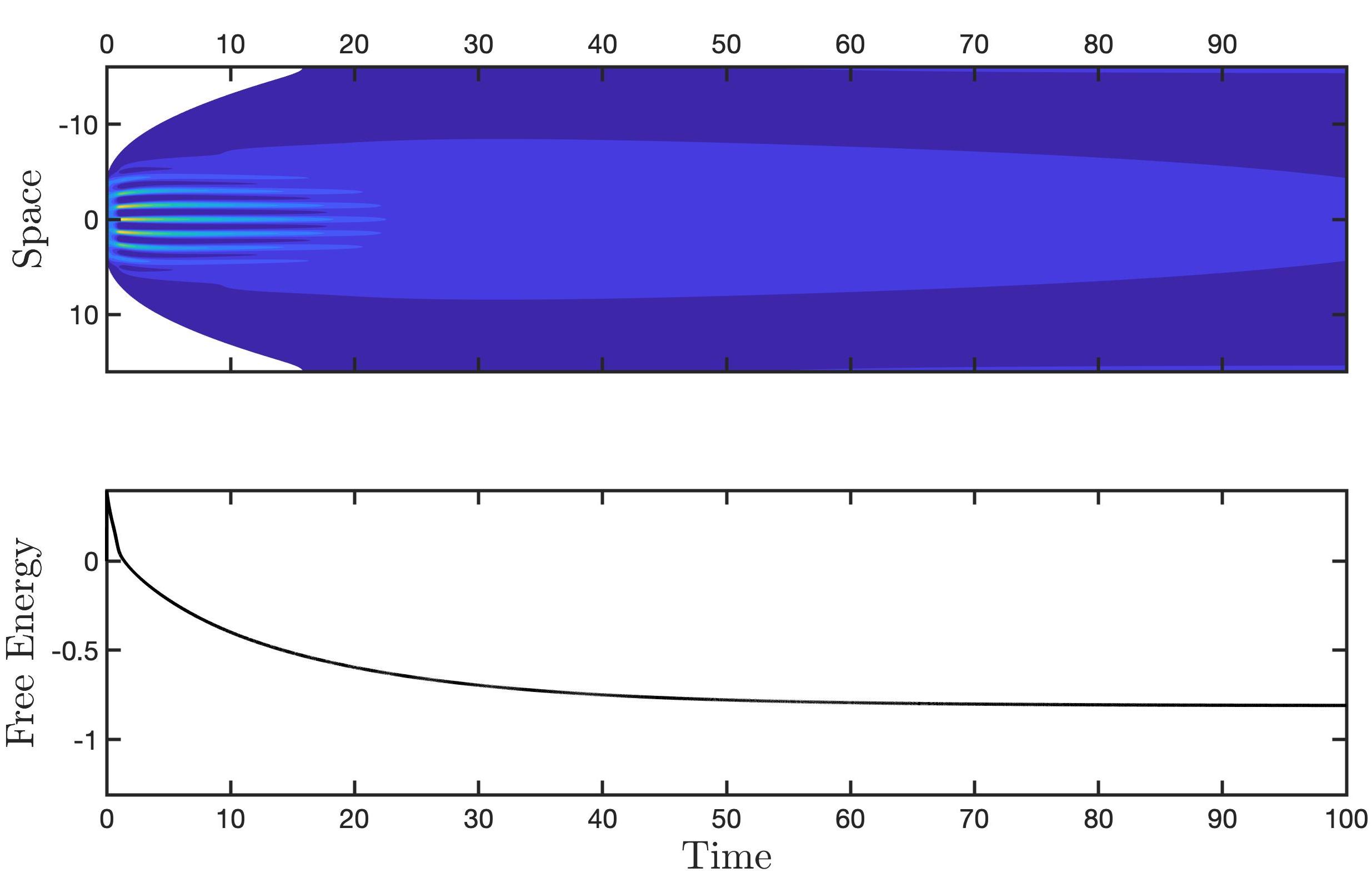"}\label{fig:ScalarContourStrongRepulsion}}\\
    \subfloat[]{\includegraphics[width=0.85\textwidth]{"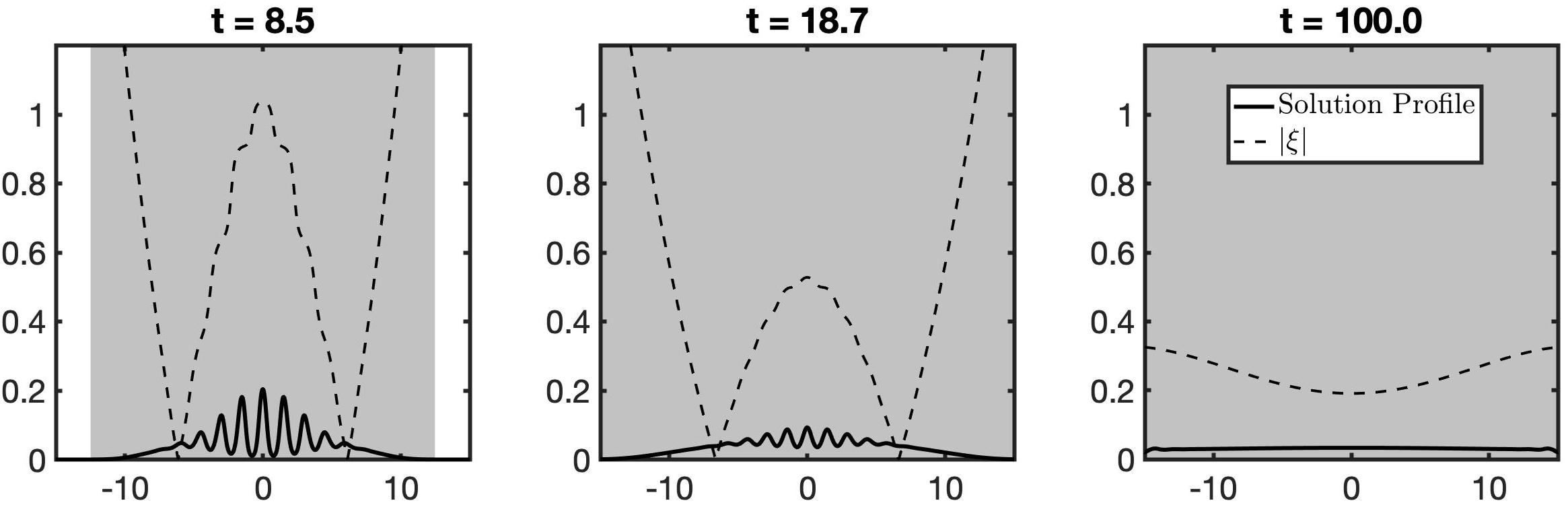"}\label{fig:ScalarFourPanelStrongRepulsion}}
    \caption{Simulation of problem \eqref{application:scalartophat} with $\alpha =-20$ and numerical domain size $32$ ($L=16$). A supplementary video of this simulation is hosted on figshare found here: \url{https://doi.org/10.6084/m9.figshare.25934428.v1}}
    \label{fig:scalar4}
\end{figure}

\subsection{The \texorpdfstring{$n$}{}-Species System}

Different from the scalar equation case, the $n$-species system leaves more room to explore the differences between Theorems \ref{thm:existsmallmasssystem} and \ref{thm:existarbmasssystem} due to the increased complexity of the detailed balance hypothesis \textup{\textbf{\ref{h5}}} for several interacting populations. For simplicity, we first consider the following general $2$-species aggregation-diffusion system
\begin{align}\label{application:systemtophat}
    \begin{cases}
        \frac{\p u}{\p t} = \frac{\p}{\p x} \left( D \frac{\p u}{\p x} + u \frac{\p}{\p x} ( K_{11} * u + K_{12} * v) \right), \quad \text{ in } (0,T) \times \mathbb{R}, \\
        \frac{\p v}{\p t} = \frac{\p}{\p x} \left( D \frac{\p v}{\p x} + v \frac{\p}{\p x} ( K_{21} * u + K_{22} * v) \right), \quad \text{ in } (0,T) \times \mathbb{R} , \\
        u (0,x) = v(0,x) = \chi_{(-\ell,\ell)}(x), \quad\quad\quad\quad\quad\quad\quad\quad\quad\quad\quad \text{ in } \mathbb{R}.
\end{cases}
\end{align}
In these simulations we again fix $D$, $R$ and $\ell$ as in \eqref{eq:numerics_fixed_values} with the initial data normalized so that $m_i =1$ for $i=1,2$. We again use no-flux boundary conditions on a domain of length $2L > 0$ specified in each figure caption.

Under hypothesis \textup{\textbf{\ref{h1}}}, Theorem \ref{thm:existsmallmasssystem} guarantees that there exists a global weak solution $(u,v)$ solving problem \eqref{application:systemtophat} so long as there holds
$$
\frac{1}{2} \left( \norm{K_{12}}_{L^\infty(\mathbb{R}^d)} + \norm{K_{21}}_{L^\infty(\mathbb{R}^d)} \right)  < \min_{i=1,2} \left\{ D - \norm{K_{ii}}_{L^\infty(\mathbb{R}^d)} \right\}.
$$
Notice that if the small mass condition \eqref{const:c1} from the scalar case is violated for either population $u$ or $v$, we cannot guarantee the existence of a solution from Theorem \ref{thm:existsmallmasssystem}; when the small mass condition is satisfied, a global weak solution exists for the $2$-species system so long as the cross-interaction between the two populations (described by $K_{12}$ and $K_{21}$) is not too strong. It is interesting to note that this requires no relation between the kernels of different populations: the kernel $K_{12}$ may be a completely different form than $K_{21}$, such as featuring different detection radii $R$ between populations. We do not explore such possibilities here, however. We instead consider as a concrete example the typical case where
\begin{align}\label{kernel:tophatsystem}
    K_{ij} (\cdot) := \alpha_{ij} K_{\text{tophat}} (\cdot),
\end{align}
for some coefficients $\alpha_{ij} \in \mathbb{R}$, $i,j=1,2$. In this way, both populations have identical detection abilities, differing only in their relative rates of attraction/repulsion. With our chosen parameters, a necessary condition for the existence of a weak solution via Theorem \ref{thm:existsmallmasssystem} is
$$
\as{\alpha_{ii}} < 2 D R = \frac{1}{2},\quad i=1,2.
$$
Then, if the cross interaction is such that $\as{\alpha_{ij}} \ll 1$, $i\neq j$, Theorem \ref{thm:existsmallmasssystem} ensures the existence of a global weak solution; since \textup{\textbf{\ref{h4}}} is also satisfied, Theorem \ref{thm:systemfinal} ensures that this solution is in fact the unique, global classical solution for a rather large class of interaction kernels with no symmetry requirements of any kind. In such cases, numerical simulation reveals behaviour similar to that of a single equation: the cross-interaction cannot be too strong, and so the dynamics are primarily governed by diffusion and self-interaction (i.e., $\alpha_{ii}$).\\

On the other hand, while Theorem \ref{thm:existarbmasssystem} allows for strong cross-interactions, the addition of hypothesis \textup{\textbf{\ref{h5}}} requires some additional structure on the coefficients $\alpha_{ij}$. We explore this briefly below.

For $n=2$ interacting populations, it is not difficult to verify that the condition
$\pi_i \alpha_{ij} = \pi_j \alpha_{ji}$
results in a consistent system so long as the cross-interaction terms $\alpha_{ij}$, $i \neq j$, are of the same sign. Therefore, Theorem \ref{thm:existarbmasssystem} ensures the existence of a global weak solution to problem \eqref{application:systemtophat} for any initial mass $m_i>0$, any $D_i>0$ fixed, and any kernels of the form given in \eqref{kernel:tophatsystem} so long as there holds
$$
\sign \{ \alpha_{ij} \} = \sign \{ \alpha_{ji} \},
$$
for each $i,j=1,\ldots,n$. Moreover, hypothesis \textup{\textbf{\ref{h4}}} is again satisfied by the top-hat detection function and so the solution obtained is in fact the unique, classical solution. 

In Figure \ref{fig:system1} we present the solution profiles for the two-species system with repulsive symmetric cross-interaction, i.e. $\alpha_{12} = \alpha_{21} < 0$. We choose $\alpha_{11} = 20$ so that the expected solution profile for $u$ is that of Figure \ref{fig:scalar3} in the absence of $v$; we then choose $\alpha_{22} = 2$ (an order of magnitude less than $\alpha_{11}$), so that the expected solution profile of $v$ is that of Figure \ref{fig:scalar1} in the absence of $u$. It is interesting to note that the solution behaviour is roughly as expected for each single equation with the no-flux boundary conditions, where the steady-state for $u$ concentrates into two peaks as indicated by the flatness of $\xi_u$ on the essential support of $u$ and the solution profile for $v$ stabilizing towards a steady state as much as $u$ by noticing the constant behavior of $\xi_v$ on its essential support. We observe the segregation of the mass of both species as well as the decay of the free energy functional in time. We remark that we expect the long time behavior of the solutions in the whole space completely different from the no-flux boundary conditions. Clarifying this point would need further numerical exploration.

In Figure \ref{fig:system2}, we run a simulation for a scenario that is not covered by the theory presented here. We fix $\alpha_{ii} = 20$, $i=1,2$, so that a patterned state of the form in Figure \ref{fig:scalar3} for each population in the absence of the other is expected. We then choose $\alpha_{12} = -10 < \alpha_{21} = 5$ so that the detailed balance condition of Hypothesis \textbf{\ref{h5}} is violated. Over short times scales some complex rearrangement of the two population densities occurs, but eventually the solution profile settles down to a form found in Figures \ref{fig:scalar2}-\ref{fig:scalar3}. The flatness of $\xi_u$, $\xi_v$ in the grey regions suggests this is indeed the steady-state, and this continues to flatten over a longer simulation time. Of course, this case no longer features a gradient flow structure, and so we observe non-monotone behaviour of the ``free energy'' of this simulation. It is interesting to note that the behaviour of the solution does not appear fundamentally different than that of cases falling within the theory of well-posedness obtained in the present work. Therefore, we believe that the detailed balance condition is a technical requirement for the techniques of the scalar equation to be applied to the general $n$-species system, and the system should remain well-posed with no further condition on the sign or magnitude of the coefficients $\alpha_{ij}$. 

\begin{figure}
    \centering
    \subfloat[]{\includegraphics[width=0.85\textwidth]{"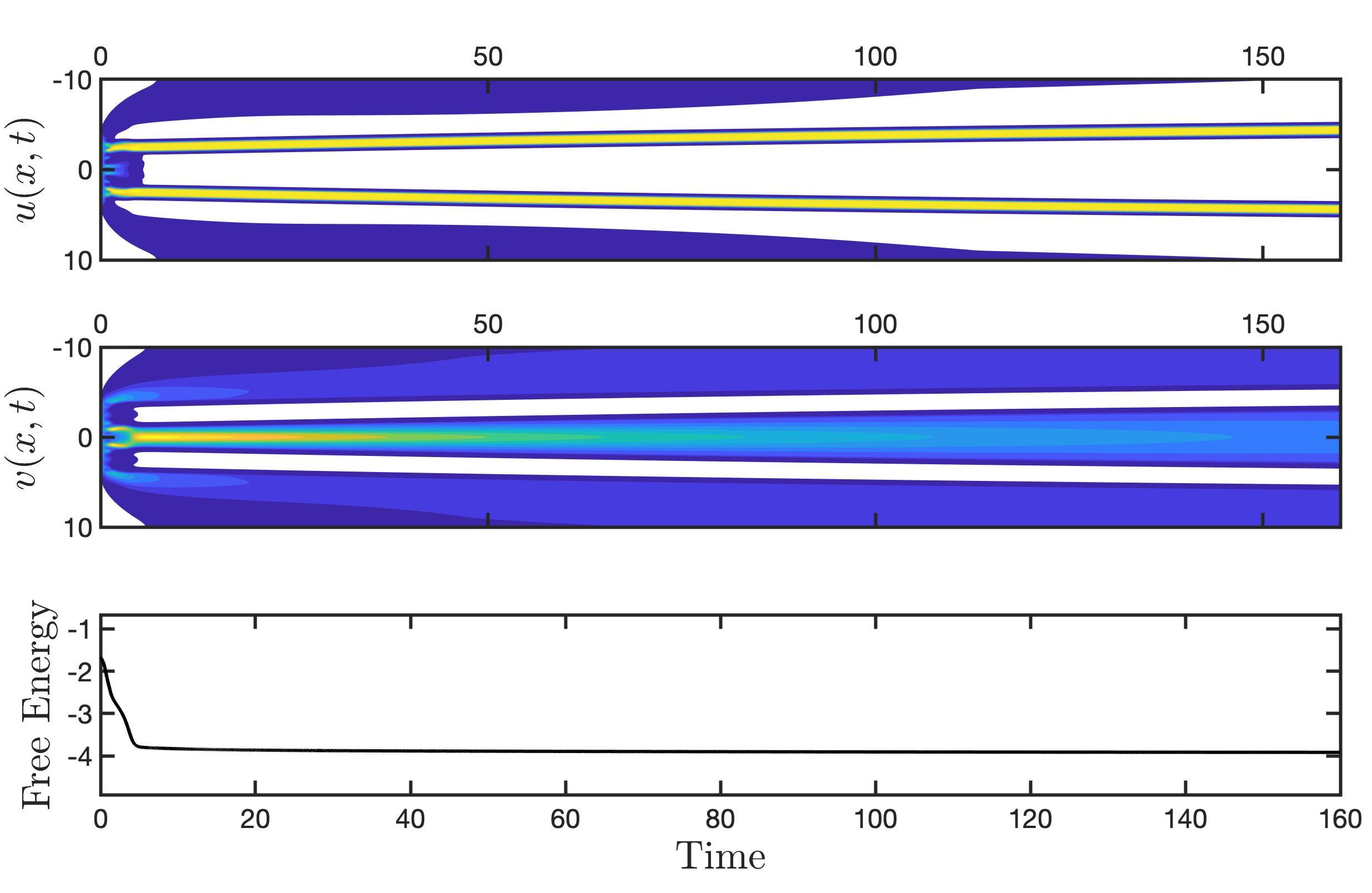"}\label{fig:2SpeciesContourStrongRepulsion}}\\
    \subfloat[]{\includegraphics[width=0.85\textwidth]{"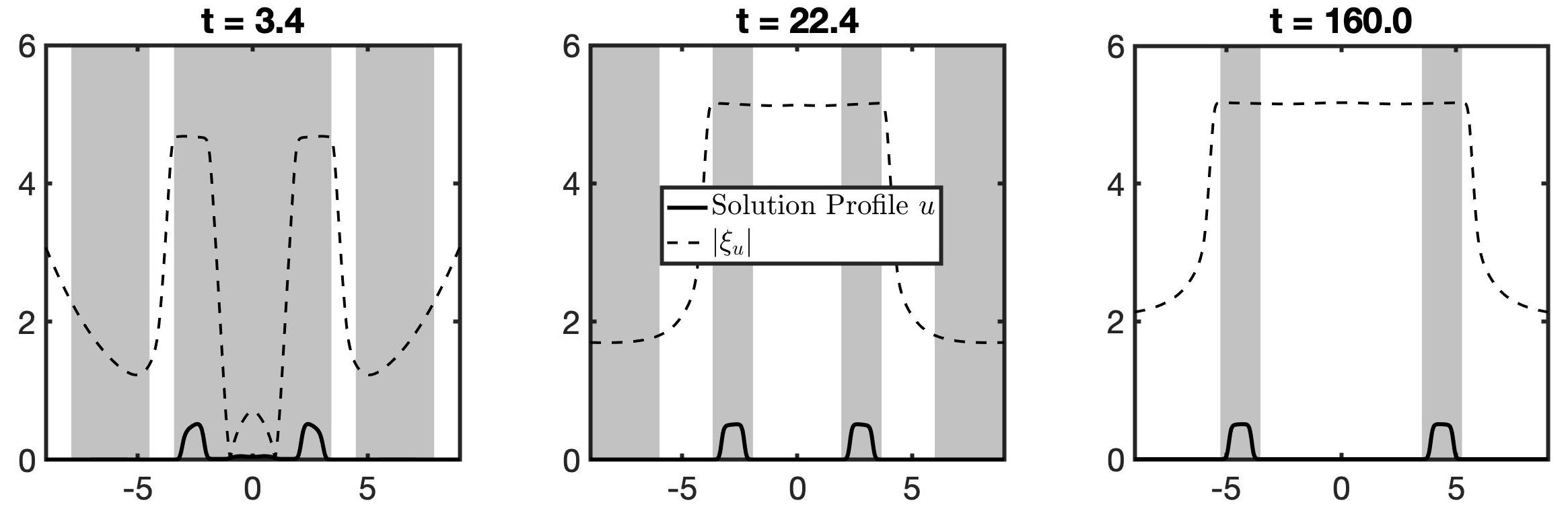"}\label{fig:2SpeciesThreePanelModerateRepulsionU}}\\
    \subfloat[]{\includegraphics[width=0.85\textwidth]{"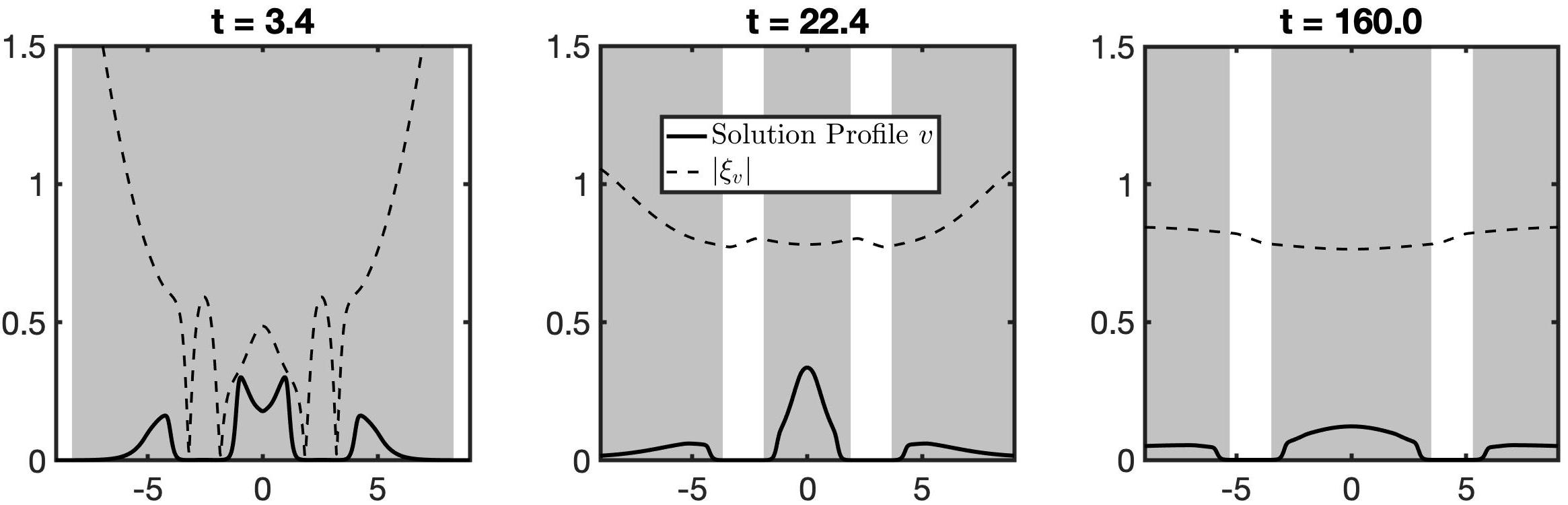"}\label{fig:2SpeciesThreePanelModerateRepulsionV}}
    \caption{Simulation of problem \eqref{application:systemtophat} with $\alpha_{11} =20$, $\alpha_{12} = \alpha_{21} = -10$, and $\alpha_{22} = 2$, with numerical domain length $20$ ($L=10$). A supplementary video of this simulation is hosted on figshare found here: \url{https://doi.org/10.6084/m9.figshare.25943101.v1}}
    \label{fig:system1}
\end{figure}

\begin{figure}
    \centering
    \subfloat[]{\includegraphics[width=0.85\textwidth]{"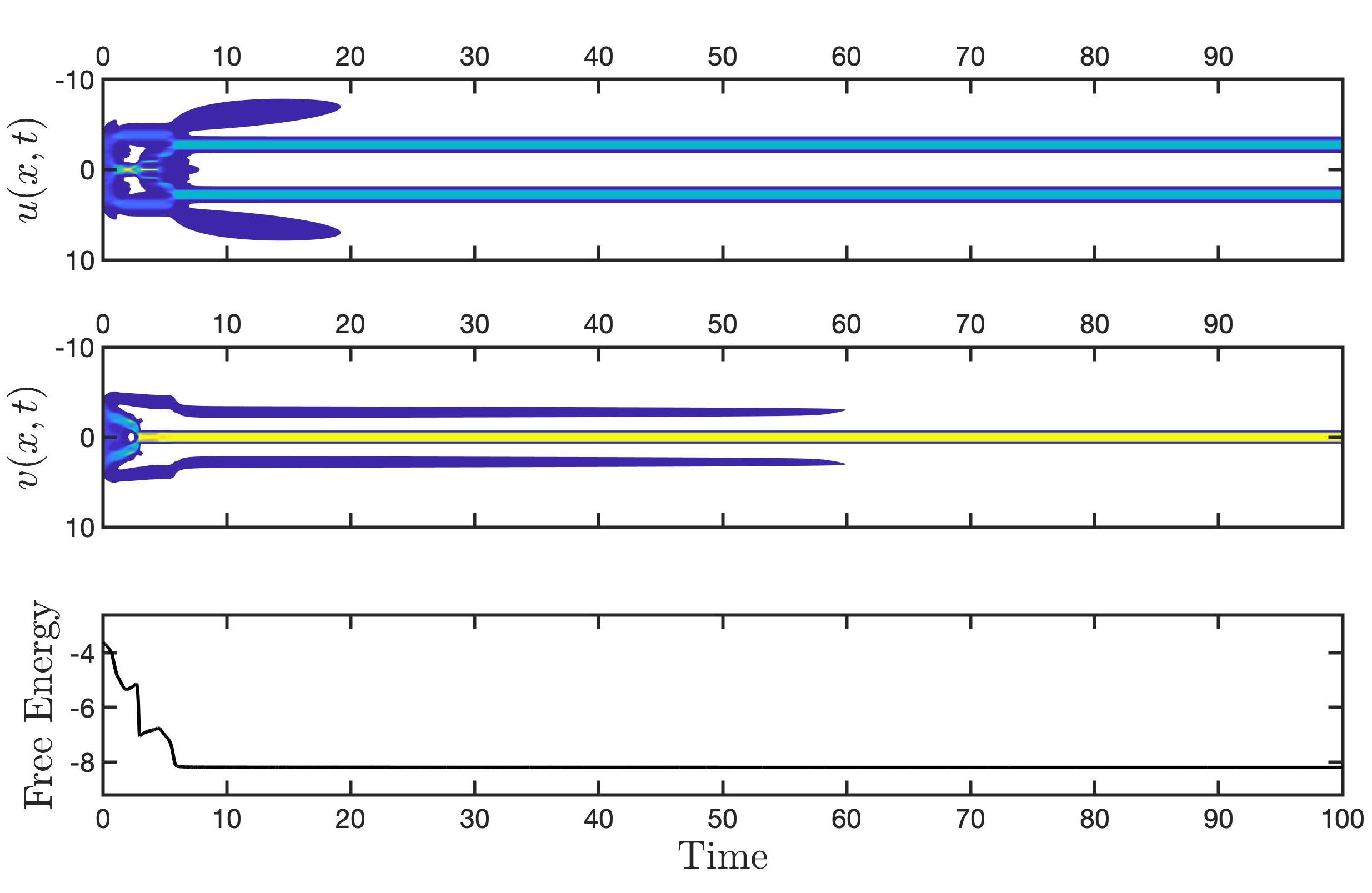"}\label{fig:2SpeciesContourRunChase}}\\
    \subfloat[]{\includegraphics[width=0.85\textwidth]{"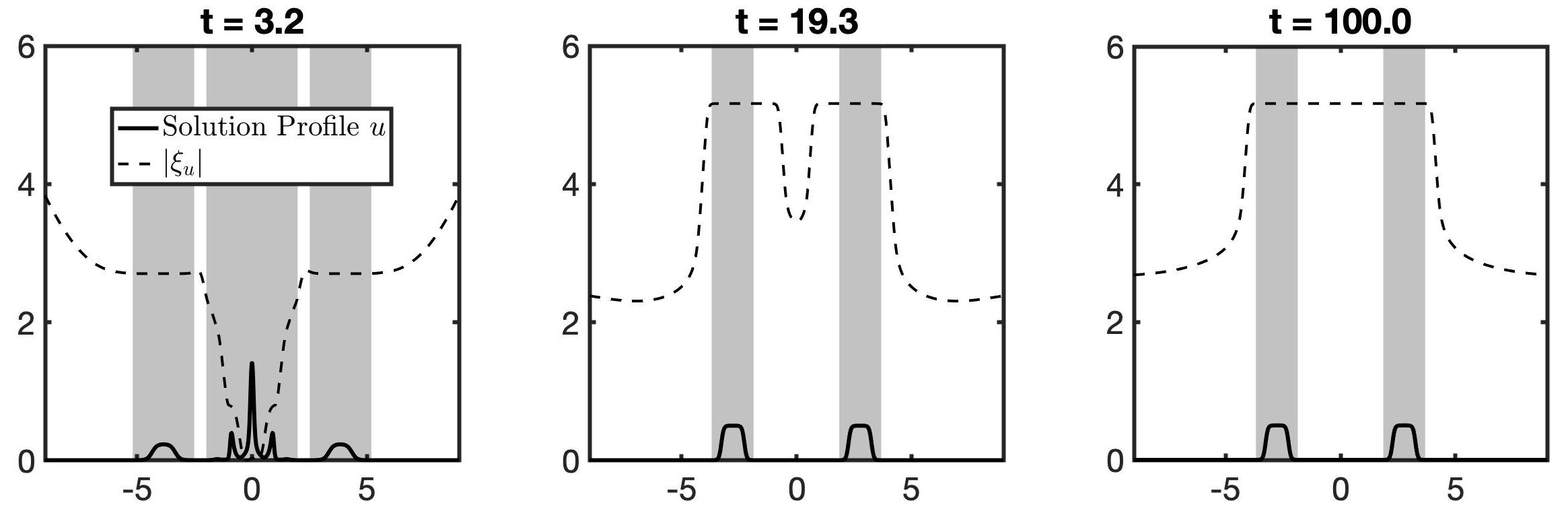"}\label{fig:2SpeciesThreePanelRunChaseU}}\\
    \subfloat[]{\includegraphics[width=0.85\textwidth]{"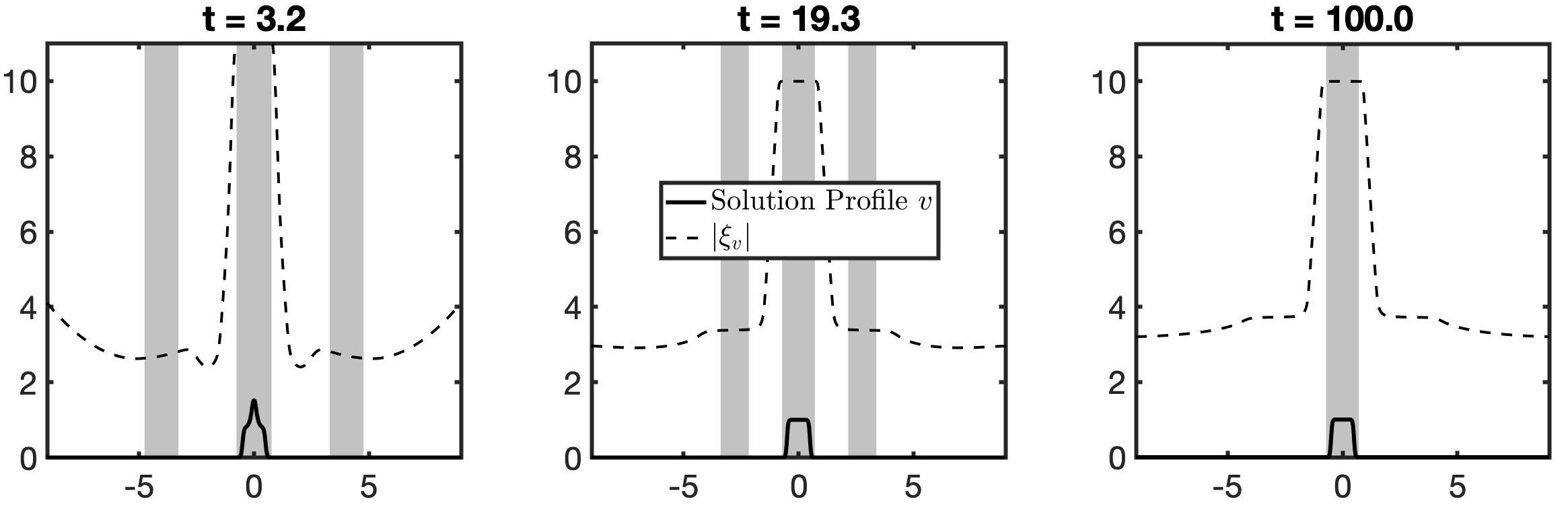"}\label{fig:2SpeciesThreePanelRunChaseV}}
    \caption{Simulation of problem \eqref{application:systemtophat} with $\alpha_{11} = \alpha_{22} = 20$, $\alpha_{12} = -10$, and  $\alpha_{21} = 5$, with numerical domain length $20$ ($L=10$). A supplementary video of this simulation is hosted on figshare found here: \url{https://doi.org/10.6084/m9.figshare.25943098.v1}}
    \label{fig:system2}
\end{figure}

When $n\geq3$, even cross-interactions of the same sign do not guarantee the existence of a solution via Theorem \ref{thm:existarbmasssystem}. To see this, we note that generically the condition $\pi_i \alpha_{ij} = \pi_j \alpha_{ji}$ gives at most $n(n-1)/2$ equations with $n(n-1)$ unknowns (disregarding the diagonal elements). Hence the detailed balance condition hypothesis \textup{\textbf{\ref{h5}}} becomes significantly under-determined as the number of interacting species grows. This highlights an interesting consequence of the gradient flow structure for many interacting populations, leaving open the question of well-posedness for these systems without structural requirements between the interaction kernels for more than $4$ interacting populations. 

Pairing our analytical insights with our numerical observations, we propose the following questions for future interest.
\begin{enumerate}
    \item For the scalar equation \eqref{eq:mainscalar}, when the kernel $K \in L^\infty(\mathbb{R}^d)$ satisfies \textbf{\ref{h2}}, but otherwise features no additional regularity, do solutions decay to $0$ in the long-time limit? More generally, what are minimal regularity requirements on $K$, which do not require $\grad K$ to belong to some $L^p$ space, that yield no nontrivial stationary states to the scalar equation on the whole space?
    \item For the $n$-species system \eqref{maineq}, when all kernels satisfy \textbf{\ref{h2}} but the detailed balance condition \textbf{\ref{h5}} is violated, does a unique strong solution exist? Our numerical simulations suggest that the answer is \textit{yes}, but it is not immediately clear how to extend the current approach to such cases.
    \item Relating the scalar and system cases, given minimal conditions on $K$ such that the solution to the scalar equation tends to $0$ in the long-time limit, do solutions to the $n$-species system enjoy the same property? It appears that for cases with an energy functional (i.e., detailed balance is satisfied), this should be the case; however, it is not at all obvious whether this remains true when the energy structure is removed.
\end{enumerate}

\appendix
\section{Extension of Theorem \ref{thm:existsmallmassscalar} to the positive-definite and Keller-Segel cases}\label{app:extension_general_kernel}
We present here a result which shows that the crucial estimate in Lemma \ref{lem:iscalar} can be combined with the positive-definite case in \cite{jungel2022nonlocal} and the Keller-Segel case in \cite{MR2103197} (the latter only in dimension $d=2$) to consider even more general kernels $K$.
\begin{lemma}
Let $u$ be the smooth and strictly positive solution of \eqref{eq:mainscalar} with $u_0 \in L^1(\Rd) \cap C^{\infty}(\Rd)$, $u_0>0$, and with the kernel $K = K_1 + K_2 + K_3$ where $K_1 \in L^{\infty}(\Rd)$, $\Delta K_2 = \mu \in \mathcal{M}(\Rd)$ is a Radon measure with bounded positive part $\mu^+$ satisfying $\|\mu^+\|_{\text{TV}}<\infty$ while $K_3 \in L^{\infty}(\Rd)$ is positive-definite in the sense that for all $\xi:\Rd \to \mathbb{R}$ with $\xi \in L^{1}(\Rd)$, we have $\int_{\Rd} \xi(x)\, K_3\ast \xi(x) \diff x \geq 0$. Moreover, if $\mu^+ \neq 0$, we assume $d=2$ and we let $C_{GN}$ to be the best constant in the Gagliardo-Nirenberg inequality $\|f\|_{L^4(\mathbb{R}^2)} \leq C_{GN} \|\nabla f\|^{1/2}_{L^2(\mathbb{R}^2)}\, \|f\|^{1/2}_{L^2(\mathbb{R}^2)}$ for all $f \in H^1(\mathbb{R}^2)$. Let $m = \int_{\Rd} u_0(x)\diff x$. Then, if
$$
D-m\, \|K_1\|_{L^{\infty}(\Rd)}-\frac{C_{\text{GN}}^4\, m\,\|\mu^+\|_{\text{TV}}}{4}>0,
$$
we have the estimate
$$
\partial_t \int_{\Rd} u\log u \diff x + \left(D-m\, \|K_1\|_{L^{\infty}(\Rd)}-\frac{C_{\text{GN}}^4\, m\,\|\mu^+\|_{\text{TV}}}{4}\right)\,\int_{\Rd} \frac{|\nabla u|^2}{u} \diff x \leq 0.
$$
\end{lemma}
\begin{proof}
We multiply \eqref{eq:mainscalar} by $\log u$ and we integrate by parts to obtain
\begin{multline*}
\partial_t \int_{\Rd} u\log u \diff x + D\,\int_{\Rd} \frac{|\nabla u|^2}{u} \diff x = -\int_{\Rd} K\ast \nabla u \, \nabla u \diff x = \\ =
-\int_{\Rd} K_1\ast \nabla u \, \nabla u \diff x -\int_{\Rd} K_2\ast \nabla u \, \nabla u \diff x
-\int_{\Rd} K_3\ast \nabla u \, \nabla u \diff x.
\end{multline*}
The first term is controlled as in Lemma \ref{lem:iscalar} to obtain
\begin{equation}\label{eq:estimate_K1_small_mass_dec_K1_K2_K3}
-\int_{\Rd} K_1\ast \nabla u \, \nabla u \diff x \leq \|K_1\|_{L^{\infty}(\Rd)} \, \|\nabla u\|^2_{L^1(\Rd)} \leq m\, \|K_1\|_{L^{\infty}(\Rd)} \, \int_{\Rd} \frac{|\nabla u|^2}{u} \diff x.
\end{equation}
The second term is controlled as in \cite{MR2103197} 
$$
-\int_{\Rd} K_2\ast \nabla u \, \nabla u \diff x = \int_{\Rd} \Delta K_2\ast u \,  u \diff x \leq  \int_{\Rd} \mu^+ \ast u \,  u \diff x \leq \|\mu^+\|_{\text{TV}} \int_{\Rd} u^2 \diff x,
$$
where we used the H{\"o}lder inequality and $\|\mu^+\ast u\|_{L^2(\Rd)} \leq \|\mu^+\|_{\text{TV}}\, \|u\|_{L^2(\Rd)}$. Then, we use the Gagliardo-Nirenberg inequality from the statement with $f = \sqrt{u}$ to obtain
\begin{equation}\label{eq:estimate_K2_Keller_Segel_using_GN}
-\int_{\Rd} K_2\ast \nabla u \, \nabla u \diff x \leq \frac{C_{\text{GN}}^4\, m\,\|\mu^+\|_{\text{TV}}}{4}\int_{\Rd} \frac{|\nabla u|^2}{u} \diff x.
\end{equation}
We stress that the manipulation is valid only in dimension $d=2$ if $\mu^+ \neq 0$ (see Remark \ref{rem:appendix_keller_segel_comment} below). Finally, $-\int_{\Rd} K_3\ast \nabla u \, \nabla u \diff x\leq 0$ and we conclude the proof. 
\end{proof}

\begin{remark}
The regularity of $K_3$ is mainly needed to make the integral $\int_{\Rd} K_3\ast \nabla u \, \nabla u \diff x$ well-defined and to pass to the limit in this term in the approximating scheme when proving the existence of solution. The basic estimate on $\int_{\Rd} \frac{|\nabla u|^2}{u} \diff x$ implies that $\nabla u \in L^2(0,T;L^1(\Rd))$ so $K_3 \in L^{\infty}(\Rd)$ is sufficient. On the other hand, one could potentially bootstrap the estimates on $u$ and $\nabla u$ in the spirit of Lemma \ref{lem:iscalar2} to weaken the required assumption on $K_3$.
\end{remark}

\begin{remark}\label{rem:appendix_keller_segel_comment}
If $\mu^+ \neq 0$, we have to restrict to $d=2$. First, the inequality fails in dimensions $d>4$ since the Sobolev exponent satisfies $\frac{2d}{d-2} < 4$ for $d>4$. Second, even for $d \in \{3,4\}$, the inequality reads $\|f\|_{L^4(\mathbb{R}^d)} \leq C_{GN} \|\nabla f\|^{d/4}_{L^d(\mathbb{R}^2)}\, \|f\|^{(4-d)/4}_{L^2(\mathbb{R}^d)}$ so the computation in \eqref{eq:estimate_K2_Keller_Segel_using_GN} yields a bound on $-\int_{\Rd} K_2\ast \nabla u \, \nabla u \diff x$ in terms of $\left(\int_{\Rd} \frac{|\nabla u|^2}{u} \diff x\right)^{d}$ which cannot be compensated by the (LHS). Another approach would be to study $\partial_t \int_{\Rd} u^{d/2} \diff x$, which is an important quantity for the Keller-Segel system in an arbitrary dimension $d$ \cite{MR2099126}; however, mimicking \eqref{eq:estimate_K1_small_mass_dec_K1_K2_K3}, one has to estimate $\int_{\Rd} \nabla u^{\frac{d}{2}} \, \nabla K_1\ast u \diff x$ in terms of $\int_{\Rd} |\nabla u^{\frac{d}{4}}|^2 \diff x$ which does not seem easy.
\end{remark}

\begin{remark}
It is not clear if we can extend in the similar way Lemma \ref{lem:iscalararbmass} which is based on the fact that $f:= \sqrt{u}\, \grad (D \log u + K  * u)$ is bounded in $L^2(Q_T)$. Decomposing $K =K_1 + K_2 + K_3$, it is not clear how to use the information on $\Delta K_2$ to control $\sqrt{u}\, \grad K_2\ast u$ or the positive-definiteness of $K_3$ to control $\sqrt{u}\, \grad K_3\ast u$. 
\end{remark}

\subsection*{Acknowledgements}
JAC and JS were supported by the Advanced Grant Nonlocal-CPD (Nonlocal PDEs for Complex Particle Dynamics: Phase Transitions, Patterns and Synchronization) of the European Research Council Executive Agency (ERC) under the European Union’s Horizon 2020 research and innovation programme (grant agreement No. 883363). JAC was also partially supported by the EPSRC grant numbers EP/T022132/1 and EP/V051121/1 and by the “Maria de Maeztu” Excellence Unit IMAG, reference CEX2020-001105-M, funded by MCIN/AEI/10.13039/501100011033/. YS is supported by Natural Sciences and Engineering Research Council of Canada (NSERC Grant PDF-578181-2023). We also wish to acknowledge the helpful remarks of Alessandro Cucinotta, Charles Elbar and Filippo Santambrogio which greatly simplified the paper. Finally, we wish to acknowledge comments of the Anonymous Reviewers which greatly improved the paper.

\bibliographystyle{abbrv}
\bibliography{references}

\end{document}